\newtheorem{theorem}{Theorem}[section]
\newtheorem{proposition}[theorem]{Proposition}
\newtheorem{corollary}[theorem]{Corollary}
\newtheorem{lemma}[theorem]{Lemma}
\theoremstyle{definition}
\newtheorem{example}[theorem]{Example}
\numberwithin{equation}{section}
\renewcommand{\gcd}{\mathop{\rm gcd}}
\newcommand{\lcm}{\mathop{\rm lcm}}
\begin{document}
\title{Products of Compressions of $k^{th}$--order slant Toeplitz operators to model spaces}
\author{Bartosz {\L}anucha, Ma{\l}gorzata Michalska}

\address{
Bartosz {\L}anucha,  \newline Institute of Mathematics,
\newline Maria Curie-Sk{\l}odowska University, \newline pl. M.
Curie-Sk{\l}odowskiej 1, \newline 20-031 Lublin, Poland}
\email{bartosz.lanucha@mail.umcs.pl}

\address{
Ma{\l}gorzata Michalska,  \newline Institute of Mathematics,
\newline Maria Curie-Sk{\l}odowska University, \newline pl. M.
Curie-Sk{\l}odowskiej 1, \newline 20-031 Lublin, Poland}
\email{malgorzata.michalska@mail.umcs.pl}

\subjclass[2010]{47B32, 47B35, 30H10.}
\keywords{model space, compressed shift, Toeplitz operator, slant
Toeplitz operator, generalized slant Toeplitz operator, truncated Toeplitz operator}
\begin{abstract}
In this paper we investigate intertwining relations for compressions of $k^{th}$--order slant Toeplitz operators to model spaces. We then ask when  a product of two such compressions is a compression itself.
\end{abstract}
\maketitle

\section{Introduction}

Let $L^2=L^2(\mathbb{T},m)$ and $L^{\infty}=L^{\infty}(\mathbb{T},m)$, where $\mathbb{T}=\{z\in\mathbb{C}:|z|=1\}$ is the unit circle and $m$ is the normalized Lebesgue measure on $\mathbb{T}$. Fix a positive integer $k$. A $k^{th}$--order slant Toeplitz operator with symbol $\displaystyle\varphi=\sum_{n=-\infty}^{+\infty}a_nz^n\in L^\infty$ is the operator $U_\varphi:L^2\to L^2$ represented with respect to the standard monomial basis by the doubly infinite matrix $[a_{ki-j}]_{i,j\in\mathbb{Z}}$ ($a_n$ being the $n$--th Fourier coefficient of $\varphi$). Clearly, for $k=1$ such matrix is a doubly infinite Toeplitz matrix (has constant diagonals) and so in that case $U_\varphi=M_\varphi$ is the classical multiplication operator, $M_\varphi f=\varphi f$.

It is known that the $k^{th}$--order slant Toeplitz operator $U_{\varphi}$ can be expressed as
$$U_{\varphi}f=W_kM_{\varphi}f,\quad f\in L^2,$$
where
$$W_k(z^n)=\begin{cases}
z^m& \text{if }\frac{n}{k}=m\in\mathbb{Z},\\
0&\text{if } \frac{n}{k}\not\in\mathbb{Z}.
\end{cases}$$
Moreover, $U_{\varphi}$ can be defined as above for any $\varphi\in L^2$, in which case it is densely defined (its domain contains $L^\infty$). However, $U_{\varphi}$  extends boundedly to $L^2$ if and only if $\varphi\in L^\infty$.

A systematic study of $k^{th}$--order slant Toeplitz operators for $k=2$ (called simply slant Toeplitz operators) was started by M. C. Ho  \cite{Ho} (see also \citelist{\cite{Ho1}\cite{Ho2}\cite{Ho3}}). He also considered compressions of slant Toeplitz operators to the classical Hardy space $H^2$ in the unit disk $\mathbb{D}=\{z\in\mathbb{C}:|z|<1\}$. Such compressions were then investigated by T. Zegeye and S. C. Arora \cite{ZA}. Slant Toeplitz operators have connections with wavelet theory and dynamical systems (see, e.g., \citelist{\cite{GMW}\cite{Ho1}\cite{V}}).

Generalized slant Toeplitz operators, that is, $k^{th}$--order slant Toeplitz operators for $k\geq2$, were introduced and studied by S. C. Arora and R. Batra \citelist{\cite{AB1}\cite{AB}}. Compressions to $H^2$ were also considered there.

Recall that the compression $V_{\varphi}:H^2\to H^2$ of $k^{th}$--order slant Toeplitz operator $U_{\varphi}$ is defined by
$$V_{\varphi}f=PU_{\varphi}f,\quad f\in H^2,$$
where $P$ is the Sz\"{e}go projection. Again, $V_\varphi$ is densely defined for $\varphi\in L^2$ (its domain contains $H^\infty=H^2\cap L^\infty$) and extends boundedly to $H^2$ if and only if $\varphi\in L^\infty$. For $k=1$, $V_\varphi=T_\varphi$ is the classical Toeplitz operator, $T_\varphi f=P(\varphi f)$.

The more recent papers \cite{GDS1,GDS2} deal with slant Toeplitz operators in multivariable setting, while in \citelist{\cite{LL}\cite{LL2}} the authors investigate commutativity of $k^{th}$--order slant Toeplitz operators.

In recent years, compressions of multiplication operators are intensely studied. In particular, compressions to model spaces, that is, subspaces of the form $K_{\alpha}=H^2\ominus \alpha H^2$, where $\alpha$ is a nonconstant inner function: $\alpha\in H^{\infty}$ and $|\alpha|=1$ a.e. on $\mathbb{T}$ (if $\alpha$ is constant, then $K_\alpha=\{0\}$). These subspaces are the non--trivial subspaces of $H^2$ which are invariant for the backward shift operator $S^{*}=T_{\bar z}$. They are also important in view of their connection with topics such as the B. Sz.-Nagy–C. Foias model theory \cite[Chapter VI]{NG}.

Since the model space $K_\alpha$ is a closed subspace of $H^2$, the functional $f\mapsto f^{(n)}(w)$ is bounded on $K_{\alpha}$ for each $n\in \mathbb{N}_0$ and $w\in\mathbb{D}$. Therefore, there exists a kernel function $k_{w,n}^{\alpha}\in K_\alpha$ such that $f^{(n)}(w)=\langle f,k_{w,n}^{\alpha}\rangle$ for all $f\in K_{\alpha}$. It is not difficult to see that $k_{w,n}^{\alpha}=P_{\alpha}k_{w,n}$, where $k_{w,n}(z)=\frac{n!z^n}{(1-\overline{w}z)^{n+1}}\in H^2$ and $P_{\alpha}$ is the orthogonal projection from $L^2$ onto $K_{\alpha}$.
Recall that $P_\alpha=P-M_\alpha PM_{\overline{\alpha}}$ (see \cite[Corollary 14.13]{fm}). In particular,  $k_{w}(z)=k_{w,0}(z)=\frac{1}{1-\overline{w} z}$ and $ k_{w}^{\alpha}=P_{\alpha}k_{w,0}$ is given by
\begin{equation}\label{kern}
k_w^\alpha(z)=\frac{1-\overline{\alpha(w)}\alpha(z)}{1-\overline{w}z},\quad z\in\mathbb{D}.
\end{equation}
Note that $k_w^\alpha$ belongs to $H^\infty$ for each $w\in\mathbb{D}$ and so $K_\alpha^\infty=K_\alpha\cap H^\infty$ is a dense subset of $K_\alpha$.

It is well known that $K_{\alpha}$ is preserved by the antilinear, isometric involution $C_{\alpha}:L^2\to L^2$ (a  conjugation) defined by
\begin{align*}
	C_{\alpha}f(z)=\alpha(z)\overline{z}\overline{f(z)},\quad |z|=1,
\end{align*}
(see \cite{gp} and \cite{bros}*{Chapter 8}). The function $\widetilde{k}_{w,n}^{\alpha}=C_{\alpha}{k}_{w,n}^{\alpha}$ is called the conjugate kernel function. In particular, $\widetilde{k}_{w}^{\alpha}=\widetilde{k}_{w,0}^{\alpha}$ and a.e. on $\mathbb{T}$,
$$ \widetilde{k}_{w}^{\alpha}(z)=C_{\alpha}k_{w}^{\alpha}(z)=\frac{\alpha(z)-\alpha(w)}{z-w}$$
(the above formula is true also for all $z\in\mathbb{D}\setminus\{w\}$ with $\widetilde{k}_{w}^{\alpha}(w)=\alpha'(w)$).

Finally, note that $K_{z^n}$ is the set of all polynomials of degree at most $n-1$. In that case $\dim K_\alpha=n$ and $\{1,z,\ldots,z^{n-1}\}$ is an orthonormal basis for $K_{z^n}$. More details on properties and structure of model spaces can be found in \cite{bros} and \cite{fm}.

For two inner functions $\alpha$, $\beta$ an asymmetric truncated Toeplitz operator $A_{\varphi}^{\alpha,\beta}$ with symbol $\varphi\in L^2$ is defined by
$$A_{\varphi}^{\alpha,\beta}f=P_{\beta}({\varphi}f),\quad f\in K_{\alpha}^{\infty}.$$
Note that $A_{\varphi}^{\alpha,\beta}$ is densely defined since $K_{\alpha}^{\infty}$ is a dense subset of $K_{\alpha}$. Denote
$$\mathcal{T}(\alpha,\beta)=\{A_{\varphi}^{\alpha,\beta}\ \colon\ \varphi\in
L^2\ \mathrm{and}\ A_{\varphi}^{\alpha,\beta}\
\text{extends boundedly to }K_{\alpha}\}$$
and $\mathcal{T}(\alpha)=\mathcal{T}(\alpha,\alpha)$.

Truncated Toeplitz operators, i.e., operators  $A_{\varphi}^{\alpha}=A_{\varphi}^{\alpha,\alpha}$ gained attention in 2007 with D. Sarason's paper \cite{s}. Under intensive study ever since, truncated Toeplitz operators proved to be a rich and interesting topic with many deep results and applications (see \citelist{\cite{gar3}\cite{CFT}} and references therein). Asymmetric truncated Toeplitz operators were introduced later in \citelist{\cite{part}\cite{part2}} and \cite{ptak}. They were then studied for example in \citelist{\cite{ptak2}\cite{BM}\cite{blicharz1}\cite{blicharz2}\cite{BL}\cite{BL3}}.

In this paper we continue the study of compressions of $k^{th}$--order slant Toeplitz operators to model spaces. Fix $k\in\mathbb{N}$. For two inner functions $\alpha$, $\beta$ and for $\varphi\in L^2$ we define
$$U_{\varphi}^{\alpha,\beta}f=P_{\beta}U_{\varphi}f=P_{\beta}W_k(\varphi f),\quad f\in K_{\alpha}^{\infty},$$
and denote
$$\mathcal{S}_k(\alpha,\beta)=\{U_{\varphi}^{\alpha,\beta}\ \colon\ \varphi\in
L^2\ \mathrm{and}\ U_{\varphi}^{\alpha,\beta}\
\text{extends boundedly to }K_{\alpha}\}.$$
Clearly, $\mathcal{S}_1(\alpha,\beta)=\mathcal{T}(\alpha,\beta)$.

The class $\mathcal{S}_k(\alpha,\beta)$ was first introduced in \cite{BM2} where some basic algebraic properties of its elements were investigated. Here we focus on some commuting relations for operators from  $\mathcal{S}_k(\alpha,\beta)$ and on products of this kind of operators. Products of truncated Toeplitz and asymmetric truncated Toeplitz operators were considered in \cite{CT} and \cite{Y}, respectively.

In Section 2 we introduce a shift invariance analogue which can be used to characterize operators from $\mathcal{S}_k(\alpha,\beta)$.

In Section 3 we investigate some intertwining relations involving operators from $\mathcal{S}_k(\alpha,\beta)$ and compressed shifts.

Sections 4--5 are devoted to products of operators from  $\mathcal{S}_k(\alpha,\beta)$ and related problems.

 In what follows we will say that an inner function $\beta$ divides $\alpha$ if $\alpha/{\beta}=\alpha\overline{\beta}$ is also an inner function. In that case we will write $\beta\leq\alpha$. Moreover, for arbitrary inner functions $\alpha,\beta$ we will denote by $\gcd(\alpha,\beta)$ and $\lcm(\alpha,\beta)$ their greatest common divisor and their least common multiple, respectively. See \cite{berc} for more details on the arithmetic of inner functions.

\section{Shift invariance for operators from $\mathcal{S}_k(\alpha,\beta)$}

Recall that a bounded linear operator $A:K_\alpha\to K_\beta$ is shift invariant if
$$\langle ASf,Sg\rangle=\langle Af,g\rangle$$
for all $f\in K_\alpha$, $g\in K_\beta$ such that $Sf\in K_\alpha$ and $Sg\in K_\beta$. For $\alpha=\beta$ this notion was considered by D. Sarason in \cite{s}. He proved that a bounded linear operator $A:K_\alpha\to K_\alpha$ is a truncated Toeplitz operator if and only if it is shift invariant. For the asymmetric case the notion of shift invariance was considered in \cite{ptak} ($\beta\leq\alpha$), \cite{BL} ($\alpha,\beta$ - finite Blaschke products) and \cite{BM}. There it was proved that the operators from $\mathcal{T}(\alpha,\beta)$ are also characterized by shift invariance. Here we prove the following generalization of this result.

\begin{theorem}
	\label{thm_char_U}
	Let $\alpha, \beta$ be two nonconstant inner functions and let $U:K_\alpha\to K_\beta$ be a bounded linear operator. Then $U\in\mathcal{S}_k(\alpha,\beta)$, $k\in\mathbb{N}$, if and only if
	\begin{equation}\label{eq_char_U}
	\langle US^kf,Sg\rangle=\langle Uf,g\rangle
	\end{equation}
	for all $f\in K_\alpha$, $g\in K_\beta$ such that $S^kf\in K_\alpha$ and $Sg\in K_\beta$.
\end{theorem}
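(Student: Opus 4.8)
The plan is to prove both implications by reducing everything to the behaviour of $U$ on the reproducing kernels $k_w^\alpha$ and their images, since these span dense subsets. For the forward implication, suppose $U = U_\varphi^{\alpha,\beta}$ for some $\varphi \in L^2$ with the operator bounded. The key computational fact I would establish first is the identity $W_k M_z = M_{z^{?}} W_k$ in the appropriate sense; more precisely, since $W_k(z^n) = z^{n/k}$ when $k \mid n$ and $0$ otherwise, one checks directly that $W_k(z^k h) = z \, W_k(h)$ and, dually, $M_{\bar z} W_k = W_k M_{\bar z^k}$ on $L^2$. Equivalently $S^* W_k = W_k (S^*)^k$ on $H^2$ (with appropriate interpretation on $L^2$). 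Using this together with $U_\varphi f = W_k(\varphi f)$, I would compute, for $f \in K_\alpha$ with $S^k f = z^k f \in K_\alpha$ and $g \in K_\beta$ with $Sg = zg \in K_\beta$:
$$\langle U S^k f, S g\rangle = \langle P_\beta W_k(\varphi z^k f), zg\rangle = \langle W_k(\varphi z^k f), zg\rangle = \langle z^k W_k(\varphi f) \cdot (\text{correction}), zg\rangle.$$
The honest version of this is: $W_k(\varphi z^k f) = z W_k(\varphi f)$ because multiplying the argument of $W_k$ by $z^k$ shifts the surviving frequencies, i.e. $W_k(z^k h) = z W_k(h)$ for all $h \in L^2$ (here $h = \varphi f$). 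Hence $\langle U S^k f, Sg\rangle = \langle z W_k(\varphi f), z g\rangle = \langle W_k(\varphi f), g\rangle = \langle P_\beta W_k(\varphi f), g\rangle = \langle Uf, g\rangle$, using that $\|zh\| = \|h\|$ on $L^2$ and $g \in K_\beta$. This proves \eqref{eq_char_U}.

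For the converse — the substantive direction — suppose $U : K_\alpha \to K_\beta$ is bounded and satisfies \eqref{eq_char_U}. I want to produce $\varphi \in L^2$ with $U = U_\varphi^{\alpha,\beta}$. I would mimic Sarason's argument for truncated Toeplitz operators, adapted to the order-$k$ setting. The idea is that \eqref{eq_char_U} forces the "matrix entries" $\langle U z^i, z^j\rangle$ (relative to the monomial structure, after reducing to the $K_{z^n}$ case or by using kernels) to depend only on $ki - j$, which is exactly the defining pattern of a $k^{th}$-order slant Toeplitz matrix; one then reads off the Fourier coefficients of a candidate symbol $\varphi$ from these entries and checks that $U_\varphi^{\alpha,\beta}$ agrees with $U$. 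Concretely: \eqref{eq_char_U} says $\langle U z^i, z^j\rangle = \langle U z^{i+1}, z^{j+k}\rangle$ whenever both shifts stay inside the respective model spaces, so along such chains the entry is constant on the lines $ki - j = \text{const}$ (note the shift indices match: replacing $i \mapsto i+1$, $j \mapsto j+k$ preserves $ki-j$). This lets one define $a_{m} := \langle U z^i, z^j\rangle$ for any admissible pair with $ki - j = m$, set $\varphi = \sum a_m z^m$ (formally), and verify boundedness and the matrix identification. To handle the general (non-finite-dimensional) case cleanly, I would follow the standard device of truncating: intersect with $K_{z^n \alpha}$-type spaces or use the density of $K_\alpha^\infty$ and the kernels $k_w^\alpha$, proving the identity $\langle U k_w^\alpha, g\rangle$ has the required form by exploiting that $S^* k_w^\alpha$ and $(S^*)^k k_w^\alpha$ stay in $K_\alpha$ modulo explicit rank-one corrections (the usual reproducing-kernel identities $S^* k_w^\alpha = \bar w(k_w^\alpha - k_0^\alpha)$ etc.), so that \eqref{eq_char_U} can be applied after subtracting boundary terms.

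**Main obstacle.** The hard part is the converse, and within it the most delicate point is converting the shift-invariance relation \eqref{eq_char_U}, which only holds for the *restricted* class of $f, g$ whose shifts remain in the model spaces, into a genuine global statement that identifies $U$ with $U_\varphi^{\alpha,\beta}$ on all of $K_\alpha$. For truncated Toeplitz operators ($k=1$) Sarason handled this via a clever rank-two perturbation argument; in the order-$k$ asymmetric setting the bookkeeping of which monomials/kernels are "reachable" by $S^k$-chains inside $K_\alpha$ versus $S$-chains inside $K_\beta$ is more intricate, and one must be careful that the candidate symbol $\varphi$ is well-defined (the entries $a_m$ must be consistently defined across all admissible index pairs, which is where \eqref{eq_char_U} is used, and one must check there are *enough* admissible pairs to pin down every needed $a_m$) and that the resulting $U_\varphi^{\alpha,\beta}$ is bounded — boundedness of $U$ does not automatically transfer, so one typically argues that $U - U_\varphi^{\alpha,\beta}$ is a bounded operator satisfying \eqref{eq_char_U} with all matrix entries zero, hence is zero. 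I would expect to invoke, or re-derive in this generality, the corresponding result from \cite{BM} for $k=1$ as a template, and the new content is purely the replacement of $S$ by $S^k$ on the domain side and the corresponding change $j \rightsquigarrow j+k$ in the index arithmetic.
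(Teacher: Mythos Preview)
Your forward direction is correct and in fact cleaner than the paper's: the identity $W_k(z^k h)=zW_k(h)$ gives the result in two lines, whereas the paper routes the computation through the characterization $U-S_\beta^*US_\alpha^k=\widetilde k_0^\beta\otimes\chi+\sum_{j=0}^{k-1}\psi_j\otimes\widetilde k_{0,j}^\alpha$ from \cite{BM2} together with Corollary~\ref{cor_shift_pow_m}. (A one-line density remark is needed since the formula $Uf=P_\beta W_k(\varphi f)$ is a priori only valid on $K_\alpha^\infty$.) One small slip: in your index bookkeeping you wrote $i\mapsto i+1$, $j\mapsto j+k$; the shift invariance $\langle US^kf,Sg\rangle=\langle Uf,g\rangle$ with $f=z^i$, $g=z^j$ gives $i\mapsto i+k$, $j\mapsto j+1$, and the preserved quantity is $kj-i$, not $ki-j$.

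The converse, however, has a real gap. Your plan is to read off a symbol from the ``matrix entries'' $\langle Uz^i,z^j\rangle$, but for a general inner function $\alpha$ the monomials $z^i$ do not lie in $K_\alpha$, so these entries are not available. You acknowledge this and propose to pass to kernels $k_w^\alpha$ with rank-one corrections, but the sketch stays vague precisely at the crux: the shift-invariance hypothesis constrains $U$ only on the \emph{forward}-shift-invariant piece $\{f:S^kf\in K_\alpha\}$, which by Corollary~\ref{cor_shift_pow_m} is the orthogonal complement of $\mathrm{span}\{\widetilde k_{0,j}^\alpha:0\le j\le k-1\}$, and it is not clear how your kernel manipulations (which naturally involve $S^*$) interface with that. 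The paper avoids constructing a symbol altogether. It observes that \eqref{eq_char_U} says exactly that $T:=U-S_\beta^*US_\alpha^k$ annihilates pairs $(f,g)$ with $f\perp\mathcal M=\mathrm{span}\{\widetilde k_{0,j}^\alpha\}$ and $g\perp\mathcal N=\mathbb C\,\widetilde k_0^\beta$, i.e.\ $P_{\mathcal N^\perp}TP_{\mathcal M^\perp}=0$. Expanding the two projections as $I$ minus an explicit finite-rank operator and multiplying out shows $T$ itself has the form $\widetilde k_0^\beta\otimes\chi+\sum_j\psi_j\otimes\widetilde k_{0,j}^\alpha$, which by \cite[Cor.~6]{BM2} is equivalent to $U\in\mathcal S_k(\alpha,\beta)$. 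This projection/finite-rank argument is the missing idea in your converse.
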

As  in the case of truncated Toeplitz operators, the proof is based on a characterization of operators from $\mathcal{S}_k(\alpha,\beta)$ in terms
of operators $S_\alpha$ and $S_\beta$. It was proved in \cite{BM2} that a bounded linear operator $U:K_\alpha\to K_\beta$ belongs to $\mathcal{S}_k(\alpha,\beta)$ if and only if there exist $\chi\in K_\alpha$ and $\psi_0,\ldots,\psi_{k-1}\in K_{\beta}$ such that
\begin{equation}\label{warr}
U-S_\beta^*US_\alpha^k=\widetilde{k}_{0}^\beta\otimes\chi+\sum_{j=0}^{k-1}\psi_j\otimes\widetilde{k}_{0,j}^\alpha
\end{equation}
(\cite[Cor. 6]{BM2}).

We also need the following lemma.

\begin{lemma}
\label{lem_shift_pow_m}
Let $\alpha$ be a nonconstant inner function and let $m\in\mathbb{N}$. For each $f\in K_\alpha$ we have
\begin{enumerate}
  \item[(a)] $\displaystyle S_{\alpha}^m f(z)=z^mf(z)-\sum_{j=0}^{m-1}\frac{1}{j!} \langle f,\widetilde{k}_{0,j}^\alpha\rangle \alpha z^{m-1-j},\ |z|=1$;
  \item[(b)] $\displaystyle (S_{\alpha}^*)^m f(z)=\overline{z}^mf(z)-\sum_{j=0}^{m-1}\frac{1}{j!} \langle f,k_{0,j}^\alpha\rangle \overline{z}^{m-j},\ |z|=1$.
\end{enumerate}
\end{lemma}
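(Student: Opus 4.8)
The plan is to establish part (b) first by a direct computation with Fourier coefficients, and then to deduce part (a) from (b) by conjugating with $C_\alpha$.

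For (b), the engine is the observation that $S_\alpha^*$ acts on $K_\alpha$ exactly as $S^*=T_{\bar z}$ does: using $P_\alpha=P-M_\alpha PM_{\bar\alpha}$ together with the fact that $\bar\alpha f$ has only strictly negative Fourier coefficients when $f\in K_\alpha$, one gets $S_\alpha^* f=P(\bar z f)$, and since $K_\alpha$ is $S^*$-invariant this iterates to $(S_\alpha^*)^m f=(S^*)^m f=P(\bar z^m f)$ for all $f\in K_\alpha$. Writing $f=\sum_{n\ge 0}\hat f(n)z^n$, one has on $\mathbb{T}$ the identity $\bar z^m f(z)-P(\bar z^m f)(z)=\sum_{n=0}^{m-1}\hat f(n)\bar z^{m-n}$; substituting $\hat f(j)=\tfrac1{j!}f^{(j)}(0)=\tfrac1{j!}\langle f,k_{0,j}^\alpha\rangle$ yields precisely the formula in (b).

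For (a), I would invoke the standard intertwining relation $C_\alpha S_\alpha^* C_\alpha=S_\alpha$ (equivalently $C_\alpha A_z^\alpha C_\alpha=A_{\bar z}^\alpha$), so that $S_\alpha^m=C_\alpha(S_\alpha^*)^m C_\alpha$ on $K_\alpha$. Apply (b) to $g:=C_\alpha f\in K_\alpha$ and then apply $C_\alpha$ to the resulting identity, using that $C_\alpha$ is antilinear with $C_\alpha h(z)=\alpha(z)\bar z\,\overline{h(z)}$ a.e. on $\mathbb{T}$. Two simplifications carry the argument: first, $\overline{g(z)}=\overline{C_\alpha f(z)}=\overline{\alpha(z)}\,z\,f(z)$, so the leading term $\alpha(z)\bar z\cdot z^m\overline{g(z)}$ collapses to $z^m f(z)$; second, $\overline{\langle C_\alpha f,k_{0,j}^\alpha\rangle}=\langle k_{0,j}^\alpha,C_\alpha f\rangle=\langle C_\alpha\widetilde k_{0,j}^\alpha,C_\alpha f\rangle=\langle f,\widetilde k_{0,j}^\alpha\rangle$ by the conjugation identity $\langle C_\alpha u,C_\alpha v\rangle=\langle v,u\rangle$, while $\alpha(z)\bar z\cdot z^{m-j}=\alpha(z)z^{m-1-j}$. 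Collecting the terms produces (a).

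I expect the only point requiring genuine care is the verification of the two standard facts being used as black boxes, namely that $(S_\alpha^*)^m$ acts as $PM_{\bar z^m}$ on $K_\alpha$ and the conjugation relation $C_\alpha S_\alpha^* C_\alpha=S_\alpha$; everything afterward is bookkeeping with $C_\alpha$ and the identity $1/z=\bar z$ on $\mathbb{T}$. As an alternative, one can prove (a) directly: from $S_\alpha^m f=P_\alpha(z^m f)=z^m f-\alpha P(\bar\alpha z^m f)$ one expands $\bar\alpha f=\sum_{\ell\ge 1}c_{-\ell}\bar z^\ell$ with $c_{-\ell}=\langle f,\alpha\bar z^\ell\rangle=\langle f,P_\alpha(\alpha\bar z^\ell)\rangle$, and identifies $P_\alpha(\alpha\bar z^\ell)=C_\alpha P_\alpha(z^{\ell-1})=\tfrac1{(\ell-1)!}\widetilde k_{0,\ell-1}^\alpha$ using $C_\alpha P_\alpha=P_\alpha C_\alpha$; reindexing gives (a), and then (b) follows by the symmetric conjugation argument. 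Either ordering is fine.
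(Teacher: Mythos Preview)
Your proof is correct. The ingredients are exactly those of the paper, only the order is reversed: the paper proves (a) directly via $S_\alpha^m f=P_\alpha(z^m f)=z^m f-\alpha P(\bar\alpha z^m f)$ and the Fourier expansion of $\overline{C_\alpha f}=\bar\alpha z f$ (your ``alternative'' route, essentially verbatim), and then obtains (b) by citing a lemma from \cite{BM2} together with $(S_\alpha^*)^m=(S^*)^m$ on $K_\alpha$---which is precisely the direct Fourier computation you give for (b). Your main route (prove (b) first, then pull (a) back through $S_\alpha=C_\alpha S_\alpha^* C_\alpha$) and the paper's route (prove (a) first, then get (b)) are mirror images under $C_\alpha$, and neither buys anything the other doesn't.
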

\begin{proof}
Let $f\in K_\alpha$. Since $S_{\alpha}^m =A_{z^m}^{\alpha}$ and $P_\alpha=P-M_{\alpha}PM_{\overline{\alpha}}$, we get
  $$S_{\alpha}^m f=A^{\alpha}_{z^m} f=P_{\alpha}(z^m f)=z^m f-\alpha P(\overline{\alpha}z^m f).$$
Now
\begin{align*}
  P(\overline{\alpha}z^m f)&=P(z^{m-1}\overline{\alpha}z f)
  =P(z^{m-1}\overline{C_{\alpha}f})
  =P\left(\sum_{j=0}^\infty \overline{\langle C_{\alpha}f,z^j\rangle}z^{m-1-j}\right)\\
  &=\sum_{j=0}^{m-1} \frac{1}{j!}\overline{\langle C_{\alpha}f,j!z^j\rangle}z^{m-1-j}
  =\sum_{j=0}^{m-1} \frac{1}{j!}\langle f,C_{\alpha}P_{\alpha}(j!z^j)\rangle z^{m-1-j}\\
  &=\sum_{j=0}^{m-1} \frac{1}{j!}\langle f,\widetilde{k}_{0,j}^\alpha\rangle z^{m-1-j}
\end{align*}
and (a) follows. Part (b) follows from Lemma 2(a) in \cite{BM2} and the fact that for $f\in K_{\alpha}$ we have $(S_{\alpha}^*)^m f=(S^*)^m f$.
\end{proof}

Recall that for $f\in K_{\alpha}$ we have $Sf=zf\in K_\alpha$ if and only if $f\perp \widetilde{k}_0^{\alpha}$. On the other hand, for $f\in K_{\alpha}$ we always have $S^*f\in K_\alpha$ and $\bar z f\in K_\alpha$ if and only if $f\perp {k}_0^{\alpha}$.
\begin{corollary}
\label{cor_shift_pow_m}
Let $\alpha$ be a nonconstant inner function, let $m\in\mathbb{N}$ and let $f\in K_\alpha$. Then
\begin{enumerate}
  \item[(a)] $\displaystyle z^mf(z)\in K_\alpha$ if and only if
        $$f\perp  {\rm span}\{\widetilde{k}_{0,j}^\alpha:\ j=0,1,\ldots,m-1\};$$
  \item[(b)] $\displaystyle \overline{z}^mf(z)\in K_\alpha$ if and only if
        $$f\perp {\rm span}\{k_{0,j}^\alpha:\ j=0,1,\ldots,m-1\}.$$
\end{enumerate}
\end{corollary}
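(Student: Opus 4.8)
The plan is to read the result directly off Lemma~\ref{lem_shift_pow_m}. For part (a), fix $f\in K_\alpha$ and recall that $S_\alpha^m f=P_\alpha(z^mf)$ always lies in $K_\alpha$, while $z^mf\in H^2$. Hence $z^mf\in K_\alpha$ if and only if $z^mf=S_\alpha^m f$, which by Lemma~\ref{lem_shift_pow_m}(a) is equivalent to
$$\sum_{j=0}^{m-1}\frac{1}{j!}\langle f,\widetilde{k}_{0,j}^\alpha\rangle\,\alpha z^{m-1-j}=0.$$
Since $\alpha$ is inner, it is nonzero a.e.\ on $\mathbb{T}$, so multiplication by $\alpha$ is injective on $L^2$; as the monomials $1,z,\ldots,z^{m-1}$ are linearly independent, so are the functions $\alpha z^{m-1-j}$, $j=0,\ldots,m-1$. (Equivalently: the correction term is $\alpha$ times a polynomial of degree at most $m-1$, hence lies in $\alpha H^2$, and a function in $K_\alpha\cap\alpha H^2$ is zero.) Therefore the displayed sum vanishes precisely when $\langle f,\widetilde{k}_{0,j}^\alpha\rangle=0$ for every $j=0,\ldots,m-1$, i.e.\ when $f\perp{\rm span}\{\widetilde{k}_{0,j}^\alpha:\ j=0,\ldots,m-1\}$. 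This proves (a).

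For part (b) the argument is identical, using Lemma~\ref{lem_shift_pow_m}(b) and the fact (recorded in the proof of that lemma) that $K_\alpha$ is $S^*$--invariant, so $(S_\alpha^*)^m f=(S^*)^m f\in K_\alpha$ for every $f\in K_\alpha$. Consequently $\overline{z}^mf\in K_\alpha$ if and only if $\overline{z}^mf=(S_\alpha^*)^m f$, i.e.\ if and only if
$$\sum_{j=0}^{m-1}\frac{1}{j!}\langle f,k_{0,j}^\alpha\rangle\,\overline{z}^{m-j}=0.$$
The functions $\overline{z},\overline{z}^2,\ldots,\overline{z}^m$ are linearly independent (any nontrivial combination has a nonzero Fourier coefficient at a negative index, hence is not even in $H^2$), so the sum is zero exactly when all coefficients $\langle f,k_{0,j}^\alpha\rangle$, $j=0,\ldots,m-1$, vanish, which is the asserted orthogonality.

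There is no real obstacle here; the only point needing a word of care is the passage from ``the correction term vanishes'' to ``each inner product vanishes,'' which is precisely the linear independence observations above. As a sanity check, taking $m=1$ recovers the well-known criteria $Sf\in K_\alpha\iff f\perp\widetilde{k}_0^\alpha$ and $\overline{z}f\in K_\alpha\iff f\perp k_0^\alpha$ quoted before the statement.
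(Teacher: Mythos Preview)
Your argument is correct and is exactly the intended derivation: the paper states the corollary without proof, as an immediate consequence of Lemma~\ref{lem_shift_pow_m}, and you have supplied precisely that derivation. The only implicit step you make explicit---that the correction terms $\alpha z^{m-1-j}$ (resp.\ $\overline{z}^{m-j}$) are linearly independent, so their vanishing forces each inner product to vanish---is straightforward and handled cleanly.
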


\begin{proof}[Proof of Theorem \ref{thm_char_U}
	]
Let $U\in\mathcal{S}_k(\alpha,\beta)$ and take $f\in K_\alpha$, $g\in K_\beta$ such that $S^kf\in K_\alpha$, $Sg\in K_\beta$. By \cite[Cor. 6]{BM2} there are $\chi\in K_\alpha$ and $\psi_0,\ldots,\psi_{k-1}\in K_{\beta}$ such that  \eqref{warr}  is satisfied.
It follows from Corollary \ref{cor_shift_pow_m}
that $f\perp  {\rm span}\{\widetilde{k}_{0,j}^\alpha:\ j=0,1,\ldots,k-1\}$ and $g\ \perp\ \widetilde{k}_{0}^\beta$, and therefore
\begin{align*}
  \langle Uf,g\rangle-\langle US^kf,Sg\rangle&=\langle (U-S_\beta^*US_\alpha^k)f,g\rangle
  =\left\langle \left(\widetilde{k}_{0}^\beta\otimes\chi+\sum_{j=0}^{k-1}\psi_j\otimes\widetilde{k}_{0,j}^\alpha\right)f,g\right\rangle\\
  &=\langle f,\chi\rangle \cdot\langle \widetilde{k}_{0}^\beta,g\rangle
  +\sum_{j=0}^{k-1}\langle f,\widetilde{k}_{0,j}^\alpha\rangle\cdot\langle\psi_j,g\rangle=0.
\end{align*}
Assume now that $U$ satisfies \eqref{eq_char_U}. As above, for each $f\in K_\alpha$, $g\in K_\beta$ we have
  $$\langle Uf,g\rangle-\langle US^kf,Sg\rangle=\langle (U-S_\beta^*US_\alpha^k)f,g\rangle.$$
Hence \eqref{eq_char_U} means that the operator $T=U-S_\beta^*US_\alpha^k$ maps $f\perp \mathcal{M}={\rm span}\{\widetilde{k}_{0,j}^\alpha:\ j=0,1,\ldots,k-1\}$ to a function $Tf\in\mathcal{N}=\mathbb{C}\cdot \widetilde{k}_{0}^\beta$. Thus
\begin{equation}
\label{eq_char_U_aux1}
  P_{\mathcal{N}^\perp}TP_{\mathcal{M}^\perp}=0,
\end{equation}
where $P_{\mathcal{N}^\perp}$ and $P_{\mathcal{M}^\perp}$ are orthogonal projections onto $\mathcal{N}^\perp=K_\beta\ominus \mathcal{N}$ and $\mathcal{M}^\perp=K_\alpha\ominus \mathcal{M}$, respectively. Clearly,
  $$P_{\mathcal{N}^\perp}=I_{K_\beta}-c\cdot(\widetilde{k}_{0}^\beta\otimes \widetilde{k}_{0}^\beta) \quad\text{where } c=\|\widetilde{k}_{0}^\beta\|^{-1}.$$
It can be verified that there exist complex numbers $a_{i,j}$, $0\leq i,j\leq k-1$, such that
  $$P_{\mathcal{M}^\perp}=I_{K_\alpha}-\sum_{i,j=0}^{k-1} a_{i,j}\cdot (\widetilde{k}_{0,i}^\alpha\otimes \widetilde{k}_{0,j}^\alpha).$$
Note that not only $\{\widetilde{k}_{0,j}^\alpha:j=0,\ldots,k-1\}$ are not orthogonal but they might not even be linearly independent, so it might happen that $a_{i,j}=0$ for some $i$ and $j$. Still, \eqref{eq_char_U_aux1} can be written as
\begin{align*}
  0&=\big(I_{K_\beta}-c\cdot(\widetilde{k}_{0}^\beta\otimes \widetilde{k}_{0}^\beta)\big) T \left(I_{K_\alpha}-\sum_{i,j=0}^{k-1} a_{i,j}\cdot (\widetilde{k}_{0,i}^\alpha\otimes \widetilde{k}_{0,j}^\alpha)\right)\\
  &=\big(I_{K_\beta}-c\cdot(\widetilde{k}_{0}^\beta\otimes \widetilde{k}_{0}^\beta)\big) \left(T-\sum_{i,j=0}^{k-1} a_{i,j}\cdot (T\widetilde{k}_{0,i}^\alpha\otimes \widetilde{k}_{0,j}^\alpha)\right)\\
  &=T-\sum_{i,j=0}^{k-1} a_{i,j}\cdot(T\widetilde{k}_{0,i}^\alpha\otimes \widetilde{k}_{0,j}^\alpha
)  -c\cdot(\widetilde{k}_{0}^\beta\otimes T^*\widetilde{k}_{0}^\beta)
  +c\sum_{i,j=0}^{k-1} a_{i,j}\cdot\langle T\widetilde{k}_{0,i}^\alpha,\widetilde{k}_{0}^\beta\rangle\cdot (\widetilde{k}_{0}^\beta\otimes \widetilde{k}_{0,j}^\alpha).
\end{align*}
Hence
\begin{align*}
  U-S_\beta^*US_\alpha^k=T
  =\widetilde{k}_{0}^\beta\otimes (\bar cT^*\widetilde{k}_{0}^\beta)
  +\sum_{j=0}^{k-1}\left(\sum_{i=0}^{k-1} a_{i,j}\cdot
  \big(T\widetilde{k}_{0,i}^\alpha-c\langle T\widetilde{k}_{0,i}^\alpha,\widetilde{k}_{0}^\beta\rangle\cdot \widetilde{k}_{0}^\beta\big)\right)
  \otimes \widetilde{k}_{0,j}^\alpha
\end{align*}
and $U$ satisfies \eqref{warr} with
$$\chi=\bar cT^*\widetilde{k}_{0}^\beta\quad\text{and}\quad\psi_j=\sum_{i=0}^{k-1} a_{i,j}\cdot
\big(T\widetilde{k}_{0,i}^\alpha-c\langle T\widetilde{k}_{0,i}^\alpha,\widetilde{k}_{0}^\beta\rangle\cdot \widetilde{k}_{0}^\beta\big).$$
Thus by \cite[Cor. 6]{BM2}, $U\in\mathcal{S}_k(\alpha,\beta)$.
\end{proof}

Note that if $\dim K_\alpha\leq k$, then the set $\{\widetilde{k}_{0,j}^\alpha:\ j=0,1,\ldots,k-1\}$ spans $K_\alpha$ and so $f\perp  {\rm span}\{\widetilde{k}_{0,j}^\alpha,\ j=0,1,\ldots,k-1\}$ if and only if $f=0$.

\begin{corollary}
\label{cor_char_U}
Let $\alpha, \beta$ be two nonconstant inner functions and assume that $\dim K_\alpha=m< +\infty$. If $k\geq m$, then every bounded linear operator from $K_\alpha$ into $K_\beta$ belongs to $\mathcal{S}_k(\alpha,\beta)$.
\end{corollary}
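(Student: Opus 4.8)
The plan is to invoke Theorem~\ref{thm_char_U} and observe that, once $k\geq m$, the shift--invariance condition \eqref{eq_char_U} imposes no restriction whatsoever. The first step is to recall (as in the remark immediately preceding the statement) that $\dim K_\alpha=m\leq k$ forces
$\mathrm{span}\{\widetilde{k}_{0,j}^\alpha:\ j=0,1,\ldots,k-1\}=K_\alpha$. If one wants to see this from scratch: the conjugation $C_\alpha$ maps $K_\alpha$ bijectively onto itself, so it is enough to check that $\{k_{0,j}^\alpha:\ j=0,\ldots,m-1\}$ spans $K_\alpha$; a function $f\in K_\alpha$ orthogonal to all of these satisfies $f^{(j)}(0)=0$ for $j=0,\ldots,m-1$, hence vanishes to order $m$ at the origin, and since $\dim K_\alpha=m$ means that $\alpha$ is a finite Blaschke product of degree $m$, such an $f$ must be identically $0$.

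The second step is to apply Corollary~\ref{cor_shift_pow_m}(a): for $f\in K_\alpha$ one has $S^kf=z^kf\in K_\alpha$ if and only if $f\perp\mathrm{span}\{\widetilde{k}_{0,j}^\alpha:\ j=0,\ldots,k-1\}$, and by the first step this span is all of $K_\alpha$. Therefore $S^kf\in K_\alpha$ only when $f=0$. Consequently, for an arbitrary bounded linear operator $U:K_\alpha\to K_\beta$, the only $f\in K_\alpha$ occurring in the quantifier of Theorem~\ref{thm_char_U} is $f=0$, and for that choice \eqref{eq_char_U} reads $0=0$. Thus \eqref{eq_char_U} holds vacuously, and Theorem~\ref{thm_char_U} yields $U\in\mathcal{S}_k(\alpha,\beta)$.

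There is essentially no obstacle here beyond getting the spanning property of the conjugate kernels right, which is precisely the remark preceding the corollary. As an alternative one can bypass Theorem~\ref{thm_char_U} and argue directly from \eqref{warr}: since $K_\alpha$ is $m$--dimensional and the functions $\widetilde{k}_{0,j}^\alpha$, $j=0,\ldots,k-1$, span it, every bounded operator from $K_\alpha$ into $K_\beta$ --- in particular $U-S_\beta^*US_\alpha^k$ --- can be written in the form $\sum_{j=0}^{k-1}\psi_j\otimes\widetilde{k}_{0,j}^\alpha$ with suitable $\psi_j\in K_\beta$ (select from $\{\widetilde{k}_{0,j}^\alpha\}$ a subfamily forming a basis of $K_\alpha$, take $\psi_j=(U-S_\beta^*US_\alpha^k)f_j$ for the corresponding dual basis $\{f_j\}$, and put the remaining $\psi_j$ equal to $0$). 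Taking $\chi=0$, relation \eqref{warr} is satisfied, so \cite[Cor.~6]{BM2} again gives $U\in\mathcal{S}_k(\alpha,\beta)$.
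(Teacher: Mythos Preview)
Your argument is correct and matches the paper's intended proof: the paper does not give an explicit proof of this corollary, but the remark immediately preceding it (that the conjugate kernels span $K_\alpha$ when $\dim K_\alpha\leq k$) together with Corollary~\ref{cor_shift_pow_m}(a) and Theorem~\ref{thm_char_U} is exactly the route you take. Your justification of the spanning property and your alternative direct verification of \eqref{warr} are both sound additions that the paper leaves implicit.
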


\section{Intertwining properties for operators from $\mathcal{S}_k(\alpha,\beta)$}

Let $\alpha$ and $\beta$ be two nonconstant inner functions. By D. Sarasons commutant lifting theorem each bounded linear operator $A:K_\alpha\to K_\alpha$ commuting with the compressed shift $S_\alpha$ is of the form $A=A^{\alpha}_\varphi$ with $\varphi\in H^\infty$. More generally, a bounded linear operator $A:K_\alpha\to K_\beta$ satisfies
\begin{equation}
\label{eqx}
S_\beta A=AS_\alpha\end{equation}
if and only if $A=A^{\alpha,\beta}_\varphi$  with $\varphi\in H^\infty$ such that $\beta\leq\alpha\varphi$ (see \cite[Theorem III.1.16]{berc}). Recently, the authors in \cite{intert} proved using basic methods that $A:K_\alpha\to K_\beta$ satisfies \eqref{eqx} if and only if $A=A^{\alpha,\beta}_\varphi$ with $\varphi\in\tfrac{\beta}{\gcd(\alpha, \beta)}\, K_{\gcd(\alpha, \beta)}$.

Here our goal is to describe (for any fixed positive integer $k$) all bounded linear operators $U:K_\alpha\to K_\beta$ that satisfy
\begin{equation}\label{eq_inv_rel_U}
S_\beta U=US_\alpha^k.
\end{equation}
We use a reasoning similar to the one used in \cite{intert}. First recall that  $U:K_\alpha\to K_\beta$ belongs to $\mathcal{S}_k(\alpha,\beta)$ if and only if
$$S_\beta U- US_\alpha^k=k_0^\beta\otimes \chi +\sum_{j=0}^{k-1} \psi_j\otimes \widetilde{k}_{0,j}^\alpha
$$
for some $\chi\in K_\alpha$ and $\psi_0,\ldots,\psi_{k-1}\in K_{\beta}$ \cite[Corollary 8(b)]{BM2}. Thus each $U$ satisfying \eqref{eq_inv_rel_U} clearly belongs to $\mathcal{S}_k(\alpha,\beta)$. We therefore describe the operators from $\mathcal{S}_k(\alpha,\beta)$ satisfying \eqref{eq_inv_rel_U}.

\begin{proposition}
\label{prop_U_inv_with_shift_and_backshift}
Let $\alpha$ and $\beta$ be two nonconstant inner functions and let $\varphi\in L^2$. Then
\begin{enumerate}
  \item[(a)]  $\displaystyle S_\beta U_\varphi^{\alpha,\beta}-U_\varphi^{\alpha,\beta} S^k_\alpha=\sum_{j=0}^{k-1} \tfrac{1}{j!}P_{\beta}W_k(\alpha\varphi z^{k-1-j})\otimes \widetilde{k}^\alpha_{0,j}-k_0^\beta\otimes P_{\alpha}(\overline{z}^k \overline{\varphi})$,
  \item[(b)]  $\displaystyle S_\beta^* U_\varphi^{\alpha,\beta}-U_\varphi^{\alpha,\beta} (S^*_\alpha)^k=\sum_{j=0}^{k-1} \tfrac{1}{j!}P_{\beta}W_k(\varphi \overline{z}^{k-j})\otimes k^\alpha_{0,j}-\widetilde{k}_0^\beta\otimes P_{\alpha}(\overline{\varphi}\cdot W_k^*\beta)$,
\end{enumerate}
where both equalities hold on $K_{\alpha}^\infty$.
\end{proposition}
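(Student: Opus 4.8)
The plan is to compute both operator identities directly from the definition $U_\varphi^{\alpha,\beta}f=P_\beta W_k(\varphi f)$ on $K_\alpha^\infty$, using the explicit formulas for powers of the compressed shift from Lemma \ref{lem_shift_pow_m} together with the standard intertwining $W_kM_z=M_{z^k}^{-1}$-type relations. Recall that on $L^2$ one has $W_k M_{\bar z}=M_{\bar z}W_k$ only in a twisted sense; more precisely the operator $W_k$ satisfies $W_k(z^kg)=zW_k(g)$ and $W_k(z^jg)=0$ for $g$ with all Fourier support in $k\mathbb Z$ and $1\le j\le k-1$, and its adjoint is $W_k^*(z^m)=z^{km}$ so that $W_k^*$ is an isometry with $W_k W_k^*=I$ on $L^2$. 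These are exactly the algebraic facts recorded in the introduction and in \cite{BM2}, so I will invoke them freely.

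For part (a), I would start from $S_\beta U_\varphi^{\alpha,\beta}f=P_\beta(zP_\beta W_k(\varphi f))=P_\beta(zW_k(\varphi f))-\langle W_k(\varphi f),k_0^\beta\rangle\,P_\beta(z\beta)$, using $P_\beta = I - \text{(projection onto }\beta H^2)$ and $z\beta\perp K_\beta$, hence the correction term simplifies: $P_\beta(zP_\beta h)=P_\beta(zh)$ for $h$ whose relevant component vanishes; more carefully one writes $zP_\beta h = z h - z(h-P_\beta h)$ and $z(h-P_\beta h)\in z\beta H^2\subset\beta H^2$, so actually $P_\beta(z P_\beta h)=P_\beta(zh)$ outright, giving $S_\beta U_\varphi^{\alpha,\beta}f=P_\beta(zW_k(\varphi f))$. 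On the other hand $U_\varphi^{\alpha,\beta}S_\alpha^k f=P_\beta W_k(\varphi\cdot S_\alpha^k f)$, and by Lemma \ref{lem_shift_pow_m}(a), $S_\alpha^k f = z^k f-\sum_{j=0}^{k-1}\frac1{j!}\langle f,\widetilde k_{0,j}^\alpha\rangle\alpha z^{k-1-j}$ a.e. on $\mathbb T$. Substituting and using $W_k(\varphi z^k f)=zW_k(\varphi f)$ (after writing $\varphi z^k f = z^k(\varphi f)$), the leading terms cancel and I am left with $\sum_{j=0}^{k-1}\frac1{j!}\langle f,\widetilde k_{0,j}^\alpha\rangle\,P_\beta W_k(\alpha\varphi z^{k-1-j})$, which is precisely the rank-$\le k$ sum in (a) written as tensor products. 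The remaining term $-k_0^\beta\otimes P_\alpha(\bar z^k\bar\varphi)$ must come from the discrepancy I glossed over in the first computation: rechecking, $P_\beta(zW_k(\varphi f))$ versus $zP_\beta W_k(\varphi f)$ do differ, since $z$ need not preserve $K_\beta$; the defect is $\langle W_k(\varphi f),\widetilde k_0^\beta\rangle$ type, and one converts $\langle W_k(\varphi f),\widetilde k_0^\beta\rangle$ to $\langle f,(\text{something})\rangle$ using $W_k^*$ and the conjugation formula to land on $P_\alpha(\bar z^k\bar\varphi)$ as the second tensor factor with $k_0^\beta$. I expect the bookkeeping of which kernel ($k_0^\beta$ vs $\widetilde k_0^\beta$) attaches to which side, and tracking the factor $\overline{z}$ inside the conjugation $C_\beta$, to be the main place errors creep in.

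For part (b) the structure is parallel but with $S_\beta^*$ and $(S_\alpha^*)^k$. I would use Lemma \ref{lem_shift_pow_m}(b): $(S_\alpha^*)^k f = \bar z^k f - \sum_{j=0}^{k-1}\frac1{j!}\langle f,k_{0,j}^\alpha\rangle\bar z^{k-j}$ a.e. on $\mathbb T$, and the fact that $S_\beta^* = S^*$ on $K_\beta$ so $S_\beta^* U_\varphi^{\alpha,\beta}f = P_\beta(\bar z W_k(\varphi f))$ after noting $P_\beta S^* = S^* P_\beta$ on $H^2$. Then $U_\varphi^{\alpha,\beta}(S_\alpha^*)^k f = P_\beta W_k(\varphi(S_\alpha^*)^k f)$, and the key identity is $W_k(\bar z^k g)=\bar z W_k(g)$ (adjoint/companion to the one used above), so again the leading terms cancel modulo the finite-rank corrections $\sum_{j=0}^{k-1}\frac1{j!}\langle f,k_{0,j}^\alpha\rangle P_\beta W_k(\varphi\bar z^{k-j})$. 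The one genuinely new ingredient is the term $-\widetilde k_0^\beta\otimes P_\alpha(\bar\varphi\cdot W_k^*\beta)$: this arises because $P_\beta(\bar z W_k(\varphi f))$ versus $\bar z$ acting more naively differ by the component along $\widetilde k_0^\beta$ (equivalently, $\bar z h\notin K_\beta$ exactly when $\langle h,k_0^\beta\rangle\ne0$, and the defect is $\langle h,k_0^\beta\rangle\bar z$), and unwinding $\langle W_k(\varphi f),k_0^\beta\rangle$ via $W_k^*k_0^\beta$... actually $W_k^*$ applied to reproducing kernels, then pairing against $\beta$, yields the symbol $\bar\varphi\cdot W_k^*\beta$ after applying $P_\alpha$. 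I would verify the formula $W_k^* = M$ composed with the substitution $z\mapsto z^k$ and check $\langle W_k g, k_0^\beta\rangle = \langle g, W_k^* k_0^\beta\rangle$ carefully, since $W_k^* k_0^\beta$ is not itself a reproducing kernel and the resulting expression $P_\alpha(\bar\varphi W_k^*\beta)$ is the subtle endpoint.

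In both parts the final step is cosmetic: rewrite the scalar-valued linear functionals $f\mapsto\langle f,\widetilde k_{0,j}^\alpha\rangle$, $f\mapsto\langle f,k_{0,j}^\alpha\rangle$ as rank-one operators $(\cdot)\otimes\widetilde k_{0,j}^\alpha$, $(\cdot)\otimes k_{0,j}^\alpha$, absorbing the factors $\frac1{j!}$ into the left tensor factor as displayed, and note that all identities were derived pointwise a.e. on $\mathbb T$ for $f\in K_\alpha^\infty$ (where everything is bounded and the manipulations with $W_k$ and the conjugations are legitimate), which is exactly the domain asserted. The main obstacle, as flagged, is not any single hard estimate but the consistent tracking of the two families of kernels and the behaviour of $W_k$ and $W_k^*$ under multiplication by $z^{\pm1}$; once those companion identities $W_k(z^k g)=zW_k(g)$ and $W_k(\bar z^k g)=\bar z W_k(g)$ and the adjoint action are pinned down, (a) and (b) follow by direct substitution.
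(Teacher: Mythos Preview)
Your approach is essentially the paper's: both compute the two differences directly from the definition, using Lemma~\ref{lem_shift_pow_m} to expand $S_\alpha^k f$ and $(S_\alpha^*)^k f$. The paper works via inner products $\langle\,\cdot\,,g\rangle$ with $g\in K_\beta^\infty$, pushing $S_\beta$, $S_\beta^*$ and $W_k$ onto $g$ by adjoints; you work operator-side. That variant is legitimate, but your execution contains two concrete errors in locating the rank-one correction terms.

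In part (a), the claim $P_\beta(zP_\beta h)=P_\beta(zh)$ ``outright'' is false for $h=W_k(\varphi f)\in L^2$ (it would be true for $h\in H^2$). The component $h_1$ of $h$ in $\overline{zH^2}$ satisfies $zh_1\in\overline{H^2}$, and its constant term $\langle h,\bar z\rangle=\langle \varphi f,W_k^*\bar z\rangle=\langle f,P_\alpha(\bar z^k\bar\varphi)\rangle$ survives $P_\beta$ as a multiple of $k_0^\beta$. That is the actual source of $-k_0^\beta\otimes P_\alpha(\bar z^k\bar\varphi)$; your recheck calls it a ``$\langle W_k(\varphi f),\widetilde k_0^\beta\rangle$ type'' defect, which is the wrong kernel and the wrong mechanism. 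In part (b), the assertion $P_\beta S^*=S^*P_\beta$ on $H^2$ is false: for $\beta g\in\beta H^2$ one has $S^*(\beta g)-\beta S^*g=g(0)\,\widetilde k_0^\beta$, so $P_\beta S^*(\beta g)=g(0)\,\widetilde k_0^\beta$. This is exactly where $-\widetilde k_0^\beta\otimes P_\alpha(\bar\varphi\cdot W_k^*\beta)$ comes from, since the $\beta H^2$-component of $W_k(\varphi f)$ has leading coefficient $\langle W_k(\varphi f),\beta\rangle=\langle f,P_\alpha(\bar\varphi\cdot W_k^*\beta)\rangle$. Your attempt to extract this term from ``$\bar z h\notin K_\beta$ when $\langle h,k_0^\beta\rangle\neq0$'' with $h\in K_\beta$ misfires: for $h\in K_\beta$ one always has $S_\beta^*h=S^*h\in K_\beta$ with no defect; the defect lives in the $\beta H^2$-part of $W_k(\varphi f)$, not inside $K_\beta$. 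With these two corrections your computation goes through and agrees with the paper's; the paper's inner-product formulation simply avoids both pitfalls.
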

\begin{proof}
Let $f\in K^\infty_{\alpha}$ and $g\in K^\infty_{\beta}$. Then
\begin{align*}
  \left\langle S_\beta U_\varphi^{\alpha,\beta} f,g \right\rangle
  &=\left\langle P_\beta W_k (\varphi f),S^*_{\beta}g \right\rangle
  =\left\langle W_k (\varphi f),S^*_{\beta}g \right\rangle
  =\left\langle \varphi f,W^*_k S^*g \right\rangle\\
  &=\left\langle \varphi f,W^*_k(\overline{z}g-\overline{z}g(0)) \right\rangle
  =\left\langle \varphi f,\overline{z}^kW^*_k g\rangle-\langle \varphi f,\overline{z}^kg(0) \right\rangle\\
  &=\left\langle \varphi z^k f, W^*_k g\right\rangle
  -\left\langle f,\overline{\varphi}\overline{z}^k \right\rangle\overline{g(0)}
  =\left\langle \varphi z^k f, W^*_k g\right\rangle
  -\left\langle f,P_{\alpha}(\overline{\varphi}\overline{z}^k) \right\rangle \cdot\langle k_0^\beta,g\rangle\\
  &=\left\langle \varphi z^k f,W^*_kg\right\rangle
  -\left\langle\big(k_0^\beta\otimes P_{\alpha}(\overline{\varphi}\overline{z}^k)\big)f,g\right\rangle.
\end{align*}
Moreover, using Lemma \ref{lem_shift_pow_m}(a), we get
\begin{align*}
  \left\langle U_\varphi^{\alpha,\beta} S^k_\alpha f,g \right\rangle
  &=\left\langle \varphi S^k_\alpha f,W_k^*g \right\rangle
  =\left\langle \varphi z^k f,W_k^*g \right\rangle
  -\sum_{j=0}^{k-1}\tfrac{1}{j!}\langle f,\widetilde{k}_{0,j}^\alpha\rangle\cdot\langle \varphi\alpha z^{k-1-j},W_k^*g\rangle\\
  &=\left\langle \varphi z^k f,W_k^*g \right\rangle
  -\sum_{j=0}^{k-1}\tfrac{1}{j!}\langle f,\widetilde{k}_{0,j}^\alpha\rangle\cdot\langle P_{\beta}W_k(\varphi\alpha z^{k-1-j}),g\rangle\\
  &=\left\langle \varphi z^k f,W_k^*g \right\rangle
  -\left\langle \left(\sum_{j=0}^{k-1}\tfrac{1}{j!}P_{\beta}W_k(\varphi\alpha z^{k-1-j})\otimes\widetilde{k}_{0,j}^\alpha\right)f,g\right\rangle.
\end{align*}
This completes the proof of (a).

To prove (b) note that, by Lemma \ref{lem_shift_pow_m}(a), $S_\beta g=zg-\langle g,\widetilde{k}^\beta_{0}\rangle\beta$. Hence, for $f\in K^\infty_{\alpha}$ and $g\in K^\infty_{\beta}$, we have
\begin{align*}
  \left\langle S^*_\beta U_{\varphi}^{\alpha,\beta} f,g \right\rangle
  &=\left\langle W_k(\varphi f),S_\beta g\right\rangle
  =\left\langle W_k(\varphi f),zg\right\rangle
  -\left\langle W_k(\varphi f),\beta\right\rangle \cdot\overline{\langle g,\widetilde{k}_0^\beta\rangle}\\
  &=\left\langle \varphi f,W_k^*(zg)\right\rangle
  -\left\langle \varphi f,W_k^*\beta\right\rangle \cdot\langle \widetilde{k}_0^\beta,g\rangle\\
  &=\left\langle \varphi\overline{z}^k f,W_k^*g\right\rangle
  -\left\langle\left\langle f,P_\alpha(\overline{\varphi}\cdot W_k^*\beta)\right\rangle\widetilde{k}_0^\beta,g\right\rangle\\
  &=\left\langle \varphi\overline{z}^k f,W_k^*g\right\rangle
  -\langle\big(\widetilde{k}_0^\beta\otimes P_\alpha(\overline{\varphi}\cdot W_k^*\beta)\big)f,g\rangle.
\end{align*}
Moreover, by Lemma \ref{lem_shift_pow_m}(b),
\begin{align*}
  \left\langle U_{\varphi}^{\alpha,\beta} (S^*_\alpha)^k f,g \right\rangle
  &=\left\langle \varphi(S^*_\alpha)^k f,W^*_kg \right\rangle
  =\left\langle \varphi\overline{z}^k f,W^*_kg \right\rangle
  -\sum_{j=0}^{k-1}\tfrac{1}{j!}\left\langle f,k_{0,j}^\alpha \right\rangle\cdot\left\langle \varphi\overline{z}^{k-j},W^*_kg \right\rangle\\
  &=\left\langle \varphi\overline{z}^k f,W^*_kg \right\rangle
  -\sum_{j=0}^{k-1}\tfrac{1}{j!}\left\langle f,k_{0,j}^\alpha \right\rangle\cdot\left\langle P_{\beta}W_k(\varphi\overline{z}^{k-j}),g\right\rangle\\
  &=\left\langle \varphi\overline{z}^k f,W^*_kg \right\rangle
  -\left\langle \left(\sum_{j=0}^{k-1}\tfrac{1}{j!} P_{\beta}W_k(\varphi\overline{z}^{k-j})\otimes k_{0,j}^\alpha\right)f,g\right\rangle
\end{align*}
and (b) follows.
\end{proof}

\begin{corollary}
\label{cor_U_inv_with_compr_shift}
Let $\alpha$ and $\beta$ be two nonconstant inner functions and let $\varphi\in H^2$.
\begin{enumerate}
  \item[(a)] If $W^*_k\beta\leq\alpha$ and $U_{\varphi}^{\alpha,\beta}\in\mathcal{S}_k(\alpha,\beta)$, then
  \begin{equation}
  \label{eq_U_inv_shift}
    S_\beta U_{\varphi}^{\alpha,\beta}=U_{\varphi}^{\alpha,\beta}S^k_\alpha.
  \end{equation}
  \item[(b)] If $\alpha\leq W^*_k\beta$ and $U_{\overline{\varphi}}^{\alpha,\beta}\in\mathcal{S}_k(\alpha,\beta)$, then
  \begin{equation}
  \label{eq_U_inv_back_shift}
    S^*_\beta U_{\overline{\varphi}}^{\alpha,\beta}=U_{\overline{\varphi}}^{\alpha,\beta}(S^*_\alpha)^k.
  \end{equation}
\end{enumerate}
\end{corollary}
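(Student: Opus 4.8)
The plan is to read off both identities directly from Proposition~\ref{prop_U_inv_with_shift_and_backshift}: under the respective divisibility hypotheses, together with $\varphi\in H^2$, the correction terms on the right--hand sides of parts (a) and (b) of that proposition all collapse to zero on $K_\alpha^\infty$, and then the boundedness built into the assumption $U_\varphi^{\alpha,\beta}\in\mathcal{S}_k(\alpha,\beta)$ (resp.\ $U_{\overline\varphi}^{\alpha,\beta}\in\mathcal{S}_k(\alpha,\beta)$) together with the density of $K_\alpha^\infty$ in $K_\alpha$ upgrades this vanishing to the operator identities \eqref{eq_U_inv_shift} and \eqref{eq_U_inv_back_shift} on all of $K_\alpha$.

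For part (a), the first step is to observe that $\varphi\in H^2$ forces $P_\alpha(\overline z^{k}\overline\varphi)=0$: indeed $\overline z^{k}\overline\varphi=\overline{z^{k}\varphi}$ and $\overline\alpha\,\overline z^{k}\overline\varphi=\overline{\alpha z^{k}\varphi}$ both lie in $\overline{zH^2}=L^2\ominus H^2$ (here we use $k\ge1$), so $P(\overline z^{k}\overline\varphi)=P(\overline\alpha\,\overline z^{k}\overline\varphi)=0$ and the claim follows from $P_\alpha=P-M_\alpha PM_{\overline\alpha}$. The second step is the intertwining identity $W_kM_{W_k^{*}\beta}=M_\beta W_k$, which I would verify by a short computation on monomials using $W_k^{*}\beta(z)=\beta(z^{k})$. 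Since $W_k^{*}\beta$ is itself a (nonconstant) inner function and $W_k^{*}\beta\le\alpha$, we may write $\alpha=(W_k^{*}\beta)\gamma$ with $\gamma$ inner; then $W_k(\alpha\varphi z^{k-1-j})=\beta\,W_k(\gamma\varphi z^{k-1-j})$, and because $\gamma\varphi z^{k-1-j}\in H^2$ while $W_k$ maps $H^2$ contractively into $H^2$, the leading factor $\beta$ gives $P_\beta W_k(\alpha\varphi z^{k-1-j})=0$ for every $j=0,\dots,k-1$. Hence the entire right--hand side of Proposition~\ref{prop_U_inv_with_shift_and_backshift}(a) vanishes on $K_\alpha^\infty$, which is what we need.

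For part (b), I would apply Proposition~\ref{prop_U_inv_with_shift_and_backshift}(b) with the symbol $\overline\varphi$ in place of $\varphi$. The terms $P_\beta W_k(\overline\varphi\,\overline z^{k-j})$, $0\le j\le k-1$, vanish with no hypothesis beyond $\varphi\in H^2$: the function $\overline\varphi\,\overline z^{k-j}$ has all its Fourier coefficients at indices $\le-1$, hence $W_k(\overline\varphi\,\overline z^{k-j})\in\overline{zH^2}$, and $P_\beta$ annihilates $\overline{zH^2}$ (again through $P_\beta=P-M_\beta PM_{\overline\beta}$). For the remaining term $\widetilde k_0^\beta\otimes P_\alpha(\varphi\cdot W_k^{*}\beta)$, note that $\varphi\cdot W_k^{*}\beta\in H^2$, and the hypothesis $\alpha\le W_k^{*}\beta$ gives $W_k^{*}\beta=\alpha\delta$ with $\delta$ inner, so $\varphi\cdot W_k^{*}\beta=\alpha(\varphi\delta)\in\alpha H^2$; since $H^2=K_\alpha\oplus\alpha H^2$ this means $P_\alpha(\varphi\cdot W_k^{*}\beta)=0$. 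Thus the right--hand side of Proposition~\ref{prop_U_inv_with_shift_and_backshift}(b) with symbol $\overline\varphi$ vanishes on $K_\alpha^\infty$, and \eqref{eq_U_inv_back_shift} follows as above.

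The only genuinely substantive ingredient is the identity $W_kM_{W_k^{*}\beta}=M_\beta W_k$ together with the bookkeeping of which Fourier coefficients survive the compression $W_k$; everything else is a routine projection computation. The one point to watch is well--definedness of the various products, but since Proposition~\ref{prop_U_inv_with_shift_and_backshift} already supplies the identities on the dense set $K_\alpha^\infty\subseteq H^\infty$ and $\varphi\in H^2$ keeps all the products in $H^2$ (or $L^2$), no extra care is needed there.
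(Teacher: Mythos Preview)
Your argument is correct and follows essentially the same route as the paper's proof: both parts amount to showing that the two correction terms in Proposition~\ref{prop_U_inv_with_shift_and_backshift} vanish, using $\varphi\in H^2$ for one term and the divisibility hypothesis for the other. The only cosmetic difference is that, for the vanishing of $P_\beta W_k(\alpha\varphi z^{k-1-j})$ in part~(a), the paper invokes the identity $P_\beta W_k = W_k P_{W_k^*\beta}$ from \cite[Lemma~2.1(g)]{BM2}, whereas you verify the equivalent intertwining relation $W_k M_{W_k^*\beta}=M_\beta W_k$ directly on monomials.
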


\begin{proof}
Let $\varphi\in H^2$.

 $(a)$ Since $\overline{z}^k\overline{\varphi}\in\overline{zH^2}$, we have $P_\alpha(\overline{z}^k\overline{\varphi})=0$. If $W^*_k\beta\leq\alpha$, then by \cite[Lemma 2.1(g)]{BM2} for each $0\leq j\leq k-1$ we get
        $$P_\beta W_k(\alpha\varphi z^{k-1-j})=W_k P_{W_k^*\beta}(\alpha\varphi z^{k-1-j})=0,$$
      since here $\alpha\varphi z^{k-1-j}\in (W_k^*\beta)\cdot H^2$. Thus \eqref{eq_U_inv_shift} holds by Proposition \ref{prop_U_inv_with_shift_and_backshift}(a).

  $(b)$ Here $\overline{\varphi}\overline{z}^{k-j}\in\overline{zH^2}$ for each $0\leq j\leq k-1$ and so
        $$P_{\beta}W_k(\overline{\varphi}\overline{z}^{k-j})=P_{\beta}PW_k(\overline{\varphi}\overline{z}^{k-j})
        =P_{\beta}W_kP(\overline{\varphi}\overline{z}^{k-j})=0.$$
      Moreover, if $\alpha\leq W_k^*\beta$, then $W_k^*\beta\cdot\varphi\in\alpha H^2$ and so $P_{\alpha}(W_k^*\beta\cdot\varphi)=0$. Hence \eqref{eq_U_inv_back_shift} follows from Proposition \ref{prop_U_inv_with_shift_and_backshift}(b) (with $\overline{\varphi}$ in place of $\varphi$).
\end{proof}
Note that for $k=1$ the above corollary is Proposition 3.3 from \cite{ptak}.

In what follows we assume that $\dim K_\alpha\geq k$ (if $\dim K_\alpha\leq k$, then $\mathcal{S}_k(\alpha,\beta)$ contains all bounded linear operators from $K_\alpha$ into $K_\beta$).

Let $U=U_{\varphi}^{\alpha,\beta}\in \mathcal{S}_k(\alpha,\beta)$ with $\varphi\in L^2$. By Proposition \ref{prop_U_inv_with_shift_and_backshift}, $U$ satisfies \eqref{eq_inv_rel_U} if and only if
\begin{equation}
\label{eq_aux_1_inv_rel_U}
  \sum_{j=0}^{k-1}\tfrac{1}{j!} P_\beta W_k(\alpha\varphi z^{k-1-j})\otimes \widetilde{k}^{\alpha}_{0,j}=k_0^\beta\otimes P_\alpha(\overline{z}^k\overline{\varphi}).
\end{equation}
Since $\dim K_\alpha\geq k$, \eqref{eq_aux_1_inv_rel_U} holds if and only if there exist numbers $c_0,c_1,\ldots,c_{k-1}\in\mathbb{C}$ such that
\begin{equation}
\label{eq_aux_2_inv_rel_U}
  \tfrac{1}{j!} P_\beta W_k(\alpha\varphi z^{k-1-j})=c_j\cdot k_0^\beta \quad\text{for each } j\in\{0,1,\ldots,k-1\}
\end{equation}
and
\begin{equation}
\label{eq_aux_3_inv_rel_U}
  P_\alpha(\overline{z}^k\overline{\varphi})=\sum_{j=0}^{k-1}\overline{c}_j \widetilde{k}^{\alpha}_{0,j}.
\end{equation}
Note that \eqref{eq_aux_3_inv_rel_U} happens if and only if
  $$P_{\alpha}(\alpha\varphi z^{k-1})=P_{\alpha}C_{\alpha}(\overline{z}^k\overline{\varphi})
  =C_{\alpha}P_{\alpha}(\overline{z}^k\overline{\varphi})=C_{\alpha}\left(\sum_{j=0}^{k-1}\overline{c}_j \widetilde{k}^{\alpha}_{0,j}\right)
  =\sum_{j=0}^{k-1}c_j k^{\alpha}_{0,j},$$
that is, if and only if
  $$P_{\alpha}\left(\alpha\varphi z^{k-1}-\sum_{j=0}^{k-1}j!c_j z^j\right)=0.$$
This condition can be expressed as
\begin{equation}
\label{eq_aux_4_inv_rel_U}
  \alpha\varphi z^{k-1}-\sum_{j=0}^{k-1}j!c_j z^j\quad \perp\quad K_\alpha.
\end{equation}
Let us now consider \eqref{eq_aux_2_inv_rel_U}, which can be equivalently expressed as
  $$P_\beta\left(W_k(\alpha\varphi z^{k-1-j})-j!c_j\right)=0  \quad\text{for each } j\in\{0,1,\ldots,k-1\},$$
i.e.,
\begin{equation}
\label{eq_aux_5_inv_rel_U}
  W_k(\alpha\varphi z^{k-1-j})-j!c_j\ \perp\ K_\beta  \quad\text{for each } j\in\{0,1,\ldots,k-1\}.
\end{equation}
For each $0\leq j\leq k-1$ denote
  $$\varphi_j=z^j W^*_k W_k(\alpha\varphi z^{k-1}\overline{z}^j).$$
Observe that
  $$\varphi_j=M_{z^j}W^*_k\big(W_k(\alpha\varphi z^{k-1-j})-j!c_j\big)+j!c_jz^j.$$
Since $M_{z^j}$ and $W_k^*$ are isometries, \eqref{eq_aux_5_inv_rel_U} is equivalent to
\begin{equation}
\label{eq_aux_6_inv_rel_U}
  \psi_j:=\varphi_j-j!c_jz^j\ \perp\ z^jW^*_k K_\beta  \quad\text{for each } j\in\{0,1,\ldots,k-1\}.
\end{equation}
Recall that $W_k^*W_k$ is the orthogonal projection from $L^2$ onto the closed linear span of $\{z^{km}: m\in\mathbb{Z}\}$. Hence the functions $\psi_0,\psi_1,\ldots,\psi_{k-1}$ are pairwise orthogonal. The same is true for the subspaces $W_k^*K_\beta,zW_k^*K_\beta,\ldots,z^{k-1}W_k^*K_\beta$. Moreover,
  $$W_k^*K_\beta\oplus zW_k^*K_\beta\oplus\ldots\oplus z^{k-1}W_k^*K_\beta=K_{W^*_k\beta}$$
(see \cite{BM2} for details). It follows that \eqref{eq_aux_6_inv_rel_U} is equivalent to
\begin{equation}
\label{eq_aux_7_inv_rel_U}
  \sum_{j=0}^{k-1}\psi_j=\sum_{j=0}^{k-1}\varphi_j- \sum_{j=0}^{k-1}j!c_jz^j\quad \perp\quad  K_{W^*_k\beta}.
\end{equation}
It is not difficult to verify that for each $f\in L^2$
  $$f=\sum_{j=0}^{k-1} z^jW^*_kW_k(\overline{z}^j f).$$
In particular,
  $$\sum_{j=0}^{k-1}\varphi_j=\sum_{j=0}^{k-1}z^jW^*_kW_k(\overline{z}^j \alpha\varphi z^{k-1})=\alpha\varphi z^{k-1},$$
and \eqref{eq_aux_7_inv_rel_U} can be expressed as
\begin{equation}
\label{eq_aux_8_inv_rel_U}
  \alpha\varphi z^{k-1}-\sum_{j=0}^{k-1}j!c_jz^j\quad \perp\quad  K_{W^*_k\beta}.
\end{equation}
Summing up, $U=U_{\varphi}^{\alpha,\beta}$ satisfies \eqref{eq_inv_rel_U} if and only if there exist $c_0,c_1,\ldots,c_{k-1}$ such that \eqref{eq_aux_4_inv_rel_U} and \eqref{eq_aux_8_inv_rel_U} hold. Equivalently,
  $$\alpha\varphi z^{k-1}-\sum_{j=0}^{k-1}j!c_jz^j\quad \perp\quad  \text{span}\{K_\alpha,K_{W^*_k\beta}\}=K_{\lcm(\alpha, W^*_k\beta)},$$
that is,
  $$\alpha\varphi z^{k-1}-\sum_{j=0}^{k-1}j!c_jz^j\in \overline{zH^2}+\lcm(\alpha, W^*_k\beta)H^2.$$
In other words, $U_{\varphi}^{\alpha,\beta}$ satisfies \eqref{eq_inv_rel_U} if and only if
\begin{align*}
  \varphi\in \ \overline{\alpha}\overline{z}^{k-1}K_{z^k} &+\overline{\alpha{z}^{k}H^2}+\overline{\alpha}\overline{z}^{k-1}\lcm(\alpha, W^*_k\beta)H^2\\
  =\overline{\alpha}\overline{K_{z^k}}
  &+\overline{\alpha{z}^{k}H^2} +\overline{z}^{k-1}\tfrac{W^*_k\beta}{\gcd(\alpha, W^*_k\beta)}H^2\\
  =\ \overline{\alpha H^2}&+\overline{z}^{k-1}\tfrac{W^*_k\beta}{\gcd(\alpha, W^*_k\beta)} \left(\gcd(\alpha, W^*_k\beta)H^2+
  K_{\gcd(\alpha, W^*_k\beta)}\right)\\
  =\ \overline{\alpha H^2}&+\overline{z}^{k-1} (W^*_k\beta) H^2+
  \overline{z}^{k-1}\tfrac{W^*_k\beta}{\gcd(\alpha, W^*_k\beta)}K_{\gcd(\alpha, W^*_k\beta)}
\end{align*}
($K_{z^k}=\text{span}\{1,z,\ldots,z^{k-1}\}$). We have thus proved
\begin{theorem}
\label{thm_inv_rel_U_aux}
Let $\alpha,\beta$ be two nonconstant inner functions and let $k\leq\dim K_\alpha$. Operator $U=U_{\varphi}^{\alpha,\beta}\in\mathcal{S}_k(\alpha,\beta)$ satisfies
\begin{equation*}
  S_\beta U_{\varphi}^{\alpha,\beta}=U_{\varphi}^{\alpha,\beta}S_\alpha^k
\end{equation*}
if and only if
\begin{equation*}
  \varphi\in\overline{\alpha H^2}+\overline{z}^{k-1} (W^*_k\beta)H^2+
  \overline{z}^{k-1}\tfrac{W^*_k\beta}{\gcd(\alpha, W^*_k\beta)}K_{\gcd(\alpha, W^*_k\beta)}.
\end{equation*}
\end{theorem}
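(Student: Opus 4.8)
The plan is to follow essentially the derivation already outlined in the paragraphs preceding the theorem, organizing it into a clean chain of equivalences. The starting point is Proposition \ref{prop_U_inv_with_shift_and_backshift}(a), which gives an explicit formula for $S_\beta U_\varphi^{\alpha,\beta}-U_\varphi^{\alpha,\beta}S_\alpha^k$ as a finite sum of rank-one operators. Setting this difference equal to zero, I would first observe that since $\varphi\in H^2$ need not be assumed (we only have $\varphi\in L^2$ and $U_\varphi^{\alpha,\beta}\in\mathcal{S}_k(\alpha,\beta)$), the relation $S_\beta U=US_\alpha^k$ is equivalent to the operator identity \eqref{eq_aux_1_inv_rel_U}. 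The crucial structural point here is that $\dim K_\alpha\geq k$ guarantees the conjugate kernels $\widetilde{k}_{0,j}^\alpha$, $j=0,\dots,k-1$, are linearly independent, so that comparing the two sides of \eqref{eq_aux_1_inv_rel_U} forces the existence of scalars $c_0,\dots,c_{k-1}$ satisfying both \eqref{eq_aux_2_inv_rel_U} and \eqref{eq_aux_3_inv_rel_U}.

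Next I would translate each of these two conditions into a perpendicularity statement. For \eqref{eq_aux_3_inv_rel_U}, applying the conjugation $C_\alpha$ and using $C_\alpha P_\alpha=P_\alpha C_\alpha$ together with $C_\alpha\widetilde{k}_{0,j}^\alpha=k_{0,j}^\alpha$ and $C_\alpha(\overline z^k\overline\varphi)=\alpha\varphi z^{k-1}$ (a.e.\ on $\mathbb T$) converts it to $P_\alpha(\alpha\varphi z^{k-1}-\sum_j j!c_j z^j)=0$, i.e.\ condition \eqref{eq_aux_4_inv_rel_U}: $\alpha\varphi z^{k-1}-\sum_j j!c_j z^j\perp K_\alpha$. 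For \eqref{eq_aux_2_inv_rel_U}, I would use the decomposition $f=\sum_{j=0}^{k-1}z^jW_k^*W_k(\overline z^j f)$ valid for all $f\in L^2$, together with the facts (from \cite{BM2}) that $W_k^*W_k$ is the projection onto $\overline{\mathrm{span}}\{z^{km}\}$ and that $W_k^*K_\beta\oplus zW_k^*K_\beta\oplus\cdots\oplus z^{k-1}W_k^*K_\beta=K_{W_k^*\beta}$. Because the pieces $\psi_j=\varphi_j-j!c_jz^j$ live in mutually orthogonal subspaces $z^jW_k^*K_\beta$, the $k$ separate conditions \eqref{eq_aux_5_inv_rel_U} collapse into the single condition \eqref{eq_aux_8_inv_rel_U}: $\alpha\varphi z^{k-1}-\sum_j j!c_j z^j\perp K_{W_k^*\beta}$, after noting $\sum_j\varphi_j=\alpha\varphi z^{k-1}$.

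Having the two perpendicularity conditions \eqref{eq_aux_4_inv_rel_U} and \eqref{eq_aux_8_inv_rel_U} in hand, I would combine them: being perpendicular to both $K_\alpha$ and $K_{W_k^*\beta}$ is the same as being perpendicular to $\mathrm{span}\{K_\alpha,K_{W_k^*\beta}\}=K_{\mathrm{lcm}(\alpha,W_k^*\beta)}$ (using $K_\gamma\vee K_\delta=K_{\mathrm{lcm}(\gamma,\delta)}$ for inner functions). Since $(K_\gamma)^\perp=\overline{zH^2}+\gamma H^2$ inside $L^2$, this says $\alpha\varphi z^{k-1}-\sum_j j!c_jz^j\in\overline{zH^2}+\mathrm{lcm}(\alpha,W_k^*\beta)H^2$, i.e.\ $\alpha\varphi z^{k-1}\in K_{z^k}+\overline{zH^2}+\mathrm{lcm}(\alpha,W_k^*\beta)H^2$ once the scalars $c_j$ are absorbed into the arbitrary element of $K_{z^k}=\mathrm{span}\{1,z,\dots,z^{k-1}\}$. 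Multiplying through by $\overline\alpha\,\overline z^{k-1}$ and simplifying via $\overline\alpha\,\overline z^{k-1}K_{z^k}+\overline{\alpha z^k H^2}=\overline{\alpha H^2}$ and $\mathrm{lcm}(\alpha,W_k^*\beta)=\alpha\cdot\frac{W_k^*\beta}{\gcd(\alpha,W_k^*\beta)}$, followed by writing $H^2=\gcd(\alpha,W_k^*\beta)H^2+K_{\gcd(\alpha,W_k^*\beta)}$, yields exactly the claimed description of $\varphi$.

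The main obstacle I anticipate is the bookkeeping in the final chain of set-equalities: one must be careful that multiplication by the unimodular factor $\overline\alpha\,\overline z^{k-1}$ genuinely carries $K_{z^k}$, $\overline{zH^2}$, and $\mathrm{lcm}(\alpha,W_k^*\beta)H^2$ onto the asserted summands, that no piece is lost or double-counted, and that the decomposition $H^2=\gcd(\alpha,W_k^*\beta)H^2+K_{\gcd(\alpha,W_k^*\beta)}$ is applied at the right stage; the identity $\frac{W_k^*\beta}{\gcd(\alpha,W_k^*\beta)}\gcd(\alpha,W_k^*\beta)H^2=(W_k^*\beta)H^2$ must also be used. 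Everything else — the linear independence of the $\widetilde{k}_{0,j}^\alpha$ under $\dim K_\alpha\geq k$, the commutation of $C_\alpha$ with $P_\alpha$, and the orthogonal decomposition of $K_{W_k^*\beta}$ — is either elementary or quotable from \cite{BM2}.
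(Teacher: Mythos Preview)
Your proposal is correct and follows essentially the same derivation the paper gives in the discussion preceding the theorem: Proposition~\ref{prop_U_inv_with_shift_and_backshift}(a), extraction of scalars $c_j$ via $\dim K_\alpha\geq k$, conjugation by $C_\alpha$ to get \eqref{eq_aux_4_inv_rel_U}, the isometric transfer through $M_{z^j}W_k^*$ and the orthogonal decomposition of $K_{W_k^*\beta}$ to get \eqref{eq_aux_8_inv_rel_U}, and the final $\mathrm{lcm}/\gcd$ bookkeeping. One small slip: the $\psi_j$ do not live in $z^jW_k^*K_\beta$ but rather in the larger mutually orthogonal slices $z^jW_k^*(L^2)$; what you need (and what the paper uses) is that $\psi_i\perp z^jW_k^*K_\beta$ automatically for $i\neq j$, so the $k$ separate perpendicularity conditions are equivalent to the single one against $K_{W_k^*\beta}$.
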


As $U_{\varphi}^{\alpha,\beta}=0$ for $\varphi\in \overline{\alpha H^2}+\overline{z}^{k-1} (W^*_k\beta) H^2$ we get
\begin{theorem}
\label{thm_inv_rel_U}
Let $\alpha,\beta$ be two nonconstant inner functions, let $k\leq\dim K_\alpha$ and let $U:K_\alpha\to K_\beta$ be a bounded linear operator. Then
  $$S_\beta U=US_\alpha^k$$
if and only if $U\in\mathcal{S}_k(\alpha,\beta)$ and $U=U_{\varphi}^{\alpha,\beta}$ with
\begin{equation*}
  \varphi\in\overline{z}^{k-1}\tfrac{W^*_k\beta}{\gcd(\alpha, W^*_k\beta)}K_{\gcd(\alpha, W^*_k\beta)}.
\end{equation*}
\end{theorem}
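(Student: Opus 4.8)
The plan is to obtain Theorem~\ref{thm_inv_rel_U} as a quick consequence of Theorem~\ref{thm_inv_rel_U_aux} by discarding the part of the symbol that produces the zero operator. Two preliminary facts are used, both already implicit above. The first is that a bounded operator $U:K_\alpha\to K_\beta$ with $S_\beta U=US_\alpha^k$ automatically belongs to $\mathcal{S}_k(\alpha,\beta)$: indeed $S_\beta U-US_\alpha^k$ is then the zero operator, which trivially has the form prescribed by \cite[Corollary 8(b)]{BM2} (take $\chi=0$ and $\psi_0=\dots=\psi_{k-1}=0$). The second is that $U_\varphi^{\alpha,\beta}=0$ whenever $\varphi\in\overline{\alpha H^2}+\overline{z}^{k-1}(W_k^*\beta)H^2$.

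Granting these, here is the argument. If $S_\beta U=US_\alpha^k$, then $U\in\mathcal{S}_k(\alpha,\beta)$ by the first fact, so $U=U_\psi^{\alpha,\beta}$ for some $\psi\in L^2$ with $U_\psi^{\alpha,\beta}$ bounded, and Theorem~\ref{thm_inv_rel_U_aux} forces
$$\psi\in\overline{\alpha H^2}+\overline{z}^{k-1}(W_k^*\beta)H^2+\overline{z}^{k-1}\tfrac{W_k^*\beta}{\gcd(\alpha, W_k^*\beta)}K_{\gcd(\alpha, W_k^*\beta)}.$$
Writing $\psi=\psi_1+\psi_2+\varphi$ accordingly, with $\varphi$ in the third summand, we have $U=U_\psi^{\alpha,\beta}=U_{\psi_1}^{\alpha,\beta}+U_{\psi_2}^{\alpha,\beta}+U_\varphi^{\alpha,\beta}=U_\varphi^{\alpha,\beta}$ on $K_\alpha^\infty$ by the second fact; since $U$ is bounded, $U_\varphi^{\alpha,\beta}$ extends boundedly and $U=U_\varphi^{\alpha,\beta}\in\mathcal{S}_k(\alpha,\beta)$ with $\varphi$ in the stated subspace. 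For the reverse implication, if $U=U_\varphi^{\alpha,\beta}\in\mathcal{S}_k(\alpha,\beta)$ with $\varphi\in\overline{z}^{k-1}\tfrac{W_k^*\beta}{\gcd(\alpha, W_k^*\beta)}K_{\gcd(\alpha, W_k^*\beta)}$, then $\varphi$ lies in particular in the three-term sum of Theorem~\ref{thm_inv_rel_U_aux}, which immediately yields $S_\beta U=US_\alpha^k$.

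The only step carrying genuine content is the second preliminary fact, which I would prove straight from the definition $U_\varphi^{\alpha,\beta}f=P_\beta W_k(\varphi f)$, $f\in K_\alpha^\infty$. If $\varphi\in\overline{\alpha H^2}$, then $\overline{\alpha}f\in\overline{zH^2}$ (this is just the orthogonality defining $K_\alpha$), so $\varphi f\in\overline{zH^2}$; since $W_k$ maps $\overline{zH^2}$ into itself and $P_\beta$ vanishes on $\overline{zH^2}$, we get $U_\varphi^{\alpha,\beta}f=0$. If $\varphi=\overline{z}^{k-1}(W_k^*\beta)g$ with $g\in H^2$, then the identity $W_kM_{W_k^*\beta}=M_\beta W_k$ (verified on monomials, using $W_k(z^{km}u)=z^mW_k(u)$) gives $W_k(\varphi f)=\beta\,W_k(\overline{z}^{k-1}gf)\in\beta H^2$, and $P_\beta$ annihilates $\beta H^2$. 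I expect this small amount of Fourier-coefficient bookkeeping to be the only obstacle; the rest of the proof is a formal reduction.
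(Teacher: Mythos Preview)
Your proposal is correct and follows essentially the same route as the paper. The paper derives Theorem~\ref{thm_inv_rel_U} in one line from Theorem~\ref{thm_inv_rel_U_aux}, simply noting that $U_\varphi^{\alpha,\beta}=0$ for $\varphi\in\overline{\alpha H^2}+\overline{z}^{k-1}(W_k^*\beta)H^2$ (a fact it quotes from \cite{BM2} rather than reproving); the membership $U\in\mathcal{S}_k(\alpha,\beta)$ for any $U$ satisfying $S_\beta U=US_\alpha^k$ is likewise observed earlier in Section~3 via \cite[Corollary~8(b)]{BM2}, exactly as you do. Your additional direct verification of the zero-operator fact is fine; just make explicit in the second case that $W_k(\overline{z}^{k-1}gf)$ lands in $H^2$ (its Fourier support after applying $W_k$ is contained in the nonnegative integers), so that $\beta\,W_k(\overline{z}^{k-1}gf)\in\beta H^2$ as claimed.
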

Note that for $k=1$ we obtain Corollary 3.7 and Theorem 3.9 from \cite{ptak}. Recall that in that case $S_\beta A=AS_\alpha$ if and only if $A\in\mathcal{T}(\alpha,\beta)$ and $A=A_{\varphi}^{\alpha,\beta}$ with $\varphi\in\tfrac{\beta}{\gcd(\alpha, \beta)}K_{\gcd(\alpha, \beta)}$. In particular, each operator intertwining $S_\beta$ and $S_\alpha$ is an asymmetric truncated Toeplitz operator with an analytic symbol. Note that for $k>1$ analicity of the symbol is not guaranteed.
\begin{example}
For any $k>1$ let $a\in\mathbb{D}\setminus\{0\}$. Put $\alpha(z)=z^{2k}$ and $\beta(z)=z^2$. Then
  $$W^*_k\beta=z^{2k}=\alpha=\gcd(\alpha, W^*_k\beta)$$
and
  $$\overline{\alpha H^2}+\overline{z}^{k-1} (W^*_k\beta) H^2+
  \overline{z}^{k-1}\tfrac{W^*_k\beta}{\gcd(\alpha, W^*_k\beta)}K_{\gcd(\alpha, W^*_k\beta)}
  =\overline{z^{2k} H^2}+z^{k+1}H^2+
  \overline{z}^{k-1}K_{z^{2k}}.$$
Hence $S_\beta U_{\varphi}^{\alpha,\beta}=U_{\varphi}^{\alpha,\beta}S_\alpha^k$ if for example $\varphi(z)=\overline{z}^{k-1}$.
\end{example}

\begin{corollary}
	Let $\alpha,\beta$ be two nonconstant inner functions, $k\leq\dim K_\beta$ and let $U:K_\alpha\to K_\beta$ be a bounded linear operator. Then
	$$(S_\beta^*)^k U=US_\alpha^*$$
	if and only if $U^*\in\mathcal{S}_k(\beta,\alpha)$ and $U=(U_{\varphi}^{\beta,\alpha})^*$ with
	\begin{equation*}
	\varphi\in\overline{z}^{k-1}\tfrac{W^*_k\alpha}{\gcd(\beta, W^*_k\alpha)}K_{\gcd(\beta, W^*_k\alpha)}.
	\end{equation*}
\end{corollary}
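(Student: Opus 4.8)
The plan is to reduce the statement to Theorem~\ref{thm_inv_rel_U} by passing to Hilbert space adjoints. Since $U\colon K_\alpha\to K_\beta$ is bounded, its adjoint $U^*\colon K_\beta\to K_\alpha$ is again a bounded linear operator, and taking adjoints of both sides of the identity $(S_\beta^*)^k U=US_\alpha^*$ yields the equivalent identity
$$U^*S_\beta^k=S_\alpha U^*,$$
because $\big((S_\beta^*)^k\big)^*=(S_\beta^{**})^k=S_\beta^k$ and $(S_\alpha^*)^*=S_\alpha$. Conversely, applying adjoints to $S_\alpha U^*=U^*S_\beta^k$ returns the original relation, so the two are genuinely equivalent. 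Thus it suffices to characterize the bounded operators $V\colon K_\beta\to K_\alpha$ satisfying $S_\alpha V=VS_\beta^k$.

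This is exactly the situation treated in Theorem~\ref{thm_inv_rel_U} after interchanging the roles of the two inner functions: I would apply that theorem with the inner function called $\alpha$ there replaced by $\beta$, the inner function called $\beta$ there replaced by $\alpha$, and the operator replaced by $V=U^*$. The hypothesis $k\le\dim K_\alpha$ of Theorem~\ref{thm_inv_rel_U} then becomes precisely the hypothesis $k\le\dim K_\beta$ assumed in the corollary. Its conclusion reads: $S_\alpha U^*=U^*S_\beta^k$ holds if and only if $U^*\in\mathcal{S}_k(\beta,\alpha)$ and $U^*=U_\varphi^{\beta,\alpha}$ with
$$\varphi\in\overline{z}^{k-1}\tfrac{W^*_k\alpha}{\gcd(\beta,W^*_k\alpha)}K_{\gcd(\beta,W^*_k\alpha)}.$$

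Finally, I would simply rewrite the condition $U^*=U_\varphi^{\beta,\alpha}$ in the equivalent form $U=(U_\varphi^{\beta,\alpha})^*$, which is exactly the statement of the corollary. There is essentially no analytic obstacle here: the only points requiring care are the bookkeeping of which inner function plays which role after the transposition, the check that the dimension hypothesis matches correctly (namely $k\le\dim K_\beta$ becomes the hypothesis on the domain $K_\beta$ of $U^*$), and the remark that boundedness of $U$ is exactly what is needed for $U^*$ to be an admissible operator in Theorem~\ref{thm_inv_rel_U}. Once the adjoint identity is written down, the result is immediate.
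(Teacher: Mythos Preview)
Your proposal is correct and matches the paper's intended approach: the corollary is stated without proof precisely because it follows immediately from Theorem~\ref{thm_inv_rel_U} by taking adjoints and swapping the roles of $\alpha$ and $\beta$, exactly as you describe. The bookkeeping you flag (the dimension hypothesis becoming $k\le\dim K_\beta$, and the rewriting $U^*=U_\varphi^{\beta,\alpha}$ as $U=(U_\varphi^{\beta,\alpha})^*$) is all that is needed.
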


\begin{theorem}
\label{thm_inv_rel_U_*_aux}
Let $\alpha,\beta$ be two nonconstant inner functions and let $k\leq\dim K_\alpha$. Operator $U=U_{\varphi}^{\alpha,\beta}\in\mathcal{S}_k(\alpha,\beta)$ satisfies
\begin{equation}
\label{eq_inv_rel_U_*}
  S^*_\beta U_{\varphi}^{\alpha,\beta}=U_{\varphi}^{\alpha,\beta}(S^*_\alpha)^k
\end{equation}
if and only if
\begin{equation*}
  \varphi\in\overline{\alpha H^2}+\overline{z}^{k-1} (W^*_k\beta) H^2+
  \overline{\left(\tfrac{\alpha}{\gcd(\alpha, W^*_k\beta)}K_{\gcd(\alpha, W^*_k\beta)}\right)}.
\end{equation*}
\end{theorem}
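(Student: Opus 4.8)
The plan is to mimic exactly the computation that led to Theorem \ref{thm_inv_rel_U_aux}, but starting from part (b) of Proposition \ref{prop_U_inv_with_shift_and_backshift} instead of part (a). Writing $U=U_\varphi^{\alpha,\beta}\in\mathcal{S}_k(\alpha,\beta)$, Proposition \ref{prop_U_inv_with_shift_and_backshift}(b) says that \eqref{eq_inv_rel_U_*} holds if and only if
$$\sum_{j=0}^{k-1}\tfrac{1}{j!}P_\beta W_k(\varphi\overline{z}^{k-j})\otimes k_{0,j}^\alpha=\widetilde{k}_0^\beta\otimes P_\alpha(\overline{\varphi}\cdot W_k^*\beta).$$
Since $\dim K_\alpha\geq k$, the functionals $k_{0,0}^\alpha,\dots,k_{0,k-1}^\alpha$ span a $k$-dimensional subspace of $K_\alpha$ (they are the reproducing kernels for derivatives at $0$, hence linearly independent when $\dim K_\alpha\geq k$), so the rank-one-operator identity above forces the existence of $c_0,\dots,c_{k-1}\in\mathbb C$ with $\tfrac{1}{j!}P_\beta W_k(\varphi\overline z^{k-j})=c_j\widetilde k_0^\beta$ for each $j$, together with $P_\alpha(\overline\varphi\cdot W_k^*\beta)=\sum_j\overline c_j k_{0,j}^\alpha$. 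This is the analogue of \eqref{eq_aux_2_inv_rel_U}--\eqref{eq_aux_3_inv_rel_U}.

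Next I would transcribe each of the two conditions into an orthogonality statement. The condition $\tfrac{1}{j!}P_\beta W_k(\varphi\overline z^{k-j})=c_j\widetilde k_0^\beta$ becomes, after subtracting the rank-one part and using that $\widetilde k_0^\beta(z)=\tfrac{\beta(z)-\beta(0)}{z}=\beta z W_k^*$-type data—more precisely, recalling $P_\beta(\beta z^{k-1}\cdot(\text{const}))$ contributes the $\widetilde k_0^\beta$ term—an orthogonality of $W_k(\varphi\overline z^{k-j})-(\text{appropriate multiple of }\beta)$ to $K_\beta$; pulling back through the isometry $W_k^*$ and multiplying by the appropriate power of $z$ and collapsing the direct-sum decomposition $K_{W_k^*\beta}=\bigoplus_{j=0}^{k-1}z^jW_k^*K_\beta$ exactly as in the passage from \eqref{eq_aux_5_inv_rel_U} to \eqref{eq_aux_8_inv_rel_U}, this should yield
$$\varphi\overline z^{k-1}-\overline\beta\,(\text{a polynomial in }z\text{ of degree}<k)\ \perp\ \text{something}$$
or, multiplying by $z^{k-1}$, an orthogonality to $\overline{z}^{k-1}K_{W_k^*\beta}$-type space, i.e. $\varphi\in \overline{z}^{k-1}(W_k^*\beta)H^2+(\text{lower-order terms})$. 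Meanwhile the second condition $P_\alpha(\overline\varphi\cdot W_k^*\beta)=\sum_j\overline c_j k_{0,j}^\alpha$ is, by definition of $P_\alpha$ and $K_\alpha$, the statement that $\overline\varphi\cdot W_k^*\beta-\sum_j\overline c_j\tfrac{j!z^j}{1}\in \overline{zH^2}+\alpha H^2$, equivalently $\varphi\in \overline{\alpha H^2}+\overline{(W_k^*\beta)^{-1}}\cdot(\text{poly})+\dots$; taking conjugates appropriately this produces the term $\overline{\left(\tfrac{\alpha}{\gcd(\alpha,W_k^*\beta)}K_{\gcd(\alpha,W_k^*\beta)}\right)}$. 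Intersecting/combining the two membership conditions—using $\operatorname{lcm}$ and $\gcd$ arithmetic of inner functions and the decomposition $hH^2+K_h=H^2$—gives the claimed description
$$\varphi\in\overline{\alpha H^2}+\overline z^{k-1}(W_k^*\beta)H^2+\overline{\left(\tfrac{\alpha}{\gcd(\alpha,W_k^*\beta)}K_{\gcd(\alpha,W_k^*\beta)}\right)}.$$

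An alternative, cleaner route—and the one I would actually write up—is to deduce Theorem \ref{thm_inv_rel_U_*_aux} from Theorem \ref{thm_inv_rel_U_aux} by a conjugation/adjoint trick. One knows that $C_\beta S_\beta C_\beta=S_\beta^*$ and $C_\alpha S_\alpha C_\alpha=S_\alpha^*$, and that conjugating an operator in $\mathcal S_k(\alpha,\beta)$ by these conjugations (or passing to adjoints) again lands in a class of the same type, with the symbol transformed in a controlled way—this kind of symbol bookkeeping is exactly what \cite{BM2} records. Under such a transformation the intertwining relation \eqref{eq_inv_rel_U_*}, $S_\beta^*U=U(S_\alpha^*)^k$, becomes the relation $S_\beta V=V S_\alpha^k$ of Theorem \ref{thm_inv_rel_U_aux} for the transformed operator $V=C_\beta U C_\alpha$, so the symbol of $U$ lies in the set described there precisely when the symbol of $V$ does; translating the set $\overline{\alpha H^2}+\overline z^{k-1}(W_k^*\beta)H^2+\overline z^{k-1}\tfrac{W_k^*\beta}{\gcd(\alpha,W_k^*\beta)}K_{\gcd(\alpha,W_k^*\beta)}$ back through the conjugation (which sends $\overline z^{k-1}\tfrac{W_k^*\beta}{\gcd}K_{\gcd}$ to $\overline{\tfrac{\alpha}{\gcd}K_{\gcd}}$, the $\overline{\alpha H^2}$ term to itself and the $\overline z^{k-1}(W_k^*\beta)H^2$ term to itself) yields exactly the stated set.

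The main obstacle, in either approach, is the careful manipulation of $W_k$, $W_k^*$, and the conjugations $C_\alpha,C_\beta$ together with the arithmetic of inner functions: one must track precisely how $\gcd$ and $\operatorname{lcm}$ of $\alpha$ and $W_k^*\beta$ enter, verify that the $k$ scalars $c_j$ can indeed be absorbed into the claimed subspaces (using $K_{z^k}=\operatorname{span}\{1,\dots,z^{k-1}\}$ and $hH^2+K_h=H^2$), and check that the direct-sum decomposition $K_{W_k^*\beta}=\bigoplus_{j=0}^{k-1}z^jW_k^*K_\beta$ is applied in the correct place so that the $k$ separate orthogonality conditions collapse to a single one. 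If one uses the conjugation route, the delicate point is instead to confirm that conjugating/adjoining genuinely interchanges relations \eqref{eq_inv_rel_U} and \eqref{eq_inv_rel_U_*} and transforms the symbol-membership sets as claimed; once that is nailed down the result is immediate from Theorem \ref{thm_inv_rel_U_aux}.
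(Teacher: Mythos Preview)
Your first, direct approach is essentially the paper's own proof: start from Proposition \ref{prop_U_inv_with_shift_and_backshift}(b), use $\dim K_\alpha\ge k$ to extract scalars $c_0,\dots,c_{k-1}$, rewrite each resulting condition as an orthogonality, and combine them via $\mathrm{span}\{K_\alpha,K_{W_k^*\beta}\}=K_{\mathrm{lcm}(\alpha,W_k^*\beta)}$ and inner-function arithmetic. The one place your sketch is muddled is the treatment of $\tfrac{1}{j!}P_\beta W_k(\varphi\overline z^{\,k-j})=c_j\widetilde k_0^\beta$: the paper writes this as $W_k(\varphi\overline z^{\,k-j})-j!c_j\beta\overline z\perp K_\beta$, applies $W_k^*$, multiplies by $z^{k-j}$, sums over $j$ using the decomposition of $K_{W_k^*\beta}$ to get $\varphi-(W_k^*\beta)\sum_j j!c_j\overline z^{\,j}\perp zK_{W_k^*\beta}$, and then---the step you do not anticipate---multiplies by $\overline{W_k^*\beta}$ and conjugates, obtaining $(W_k^*\beta)\overline\varphi-\sum_j j!\overline c_jz^j\perp K_{W_k^*\beta}$. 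This now has exactly the same shape as the rewriting of the other condition, $(W_k^*\beta)\overline\varphi-\sum_j j!\overline c_jz^j\perp K_\alpha$, so the two combine immediately; your ``orthogonality to $\overline z^{\,k-1}K_{W_k^*\beta}$-type space'' does not lead there cleanly.

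Your conjugation alternative is not the paper's route. It does work, and the check you flag is the right one: conjugating the characterization \eqref{warr} for $U$ by $C_\beta$ and $C_\alpha$ (using $C_\beta S_\beta^*=S_\beta C_\beta$, $C_\alpha\widetilde k_{0,j}^\alpha=k_{0,j}^\alpha$, and $C_\beta(a\otimes b)C_\alpha=(C_\beta a)\otimes(C_\alpha b)$ for conjugations) yields exactly the characterization \eqref{warr2} for $V=C_\beta U C_\alpha$, so $V\in\mathcal S_k(\alpha,\beta)$. What this buys you is that Theorem \ref{thm_inv_rel_U_*_aux} then becomes a formal consequence of Theorem \ref{thm_inv_rel_U_aux}; what it costs is that you must still track the symbol of $V$ through \eqref{ox} and translate the membership set back, which in practice is about as much bookkeeping as the paper's direct computation.
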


\begin{proof}
Let $U=U_{\varphi}^{\alpha,\beta}\in \mathcal{S}_k(\alpha,\beta)$, where $\varphi\in L^2$. By Proposition \ref{prop_U_inv_with_shift_and_backshift}, $U$ satisfies \eqref{eq_inv_rel_U_*} if and only if
\begin{equation}
\label{eq_aux_1_inv_rel_U_*}
  \sum_{j=0}^{k-1}\tfrac{1}{j!} P_\beta W_k(\varphi \overline{z}^{k-j})\otimes k^{\alpha}_{0,j}=\widetilde{k}_0^\beta\otimes P_\alpha(\overline{\varphi}\cdot W_k^*\beta).
\end{equation}
Since $\dim K_\alpha\geq k$, \eqref{eq_aux_1_inv_rel_U_*} holds if and only if there exist numbers $c_0,c_1,\ldots,c_{k-1}\in\mathbb{C}$ such that
\begin{equation}
\label{eq_aux_2_inv_rel_U_*}
  \tfrac{1}{j!} P_\beta W_k(\varphi \overline{z}^{k-j})=c_j\cdot \widetilde{k}_0^\beta \quad\text{for each } j\in\{0,1,\ldots,k-1\}
  \end{equation}
  and
  \begin{equation}
\label{eq_aux_3_inv_rel_U_*}
  P_\alpha(\overline{\varphi}\cdot W_k^*\beta)=\sum_{j=0}^{k-1}\overline{c}_j k^{\alpha}_{0,j}.
\end{equation}
Equality \eqref{eq_aux_3_inv_rel_U_*} happens if and only if
\begin{equation}
\label{eq_aux_4_inv_rel_U_*}
  \overline{\varphi}\cdot W_k^*\beta-\sum_{j=0}^{k-1}j!\overline{c}_j z^j\ \perp\ K_\alpha.
\end{equation}
Moreover, \eqref{eq_aux_2_inv_rel_U_*} holds if and only if
\begin{equation*}
  W_k(\varphi \overline{z}^{k-j})-j!c_j\beta\overline{z}\ \perp\ K_\beta  \quad\text{for each } j\in\{0,1,\ldots,k-1\},
\end{equation*}
which is equivalent to
\begin{equation*}
  W^*_kW_k(\varphi \overline{z}^{k-j})-j!c_j(W^*_k\beta)\overline{z}^k\ \perp\ W^*_kK_\beta  \quad\text{for each } j\in\{0,1,\ldots,k-1\},
\end{equation*}
or
\begin{equation*}
  z^{k-j}W^*_kW_k(\varphi \overline{z}^{k-j})-j!c_j(W^*_k\beta)\overline{z}^j\ \perp\ z^{k-j}W^*_kK_\beta  \quad\text{for each } j\in\{0,1,\ldots,k-1\}.
\end{equation*}
It follows that
\begin{equation*}
  \varphi- (W^*_k\beta)\sum_{j=0}^{k-1}j!c_j\overline{z}^j\ \perp\  zK_{W^*_k\beta}
\end{equation*}
and
\begin{equation*}
  \overline{(W^*_k\beta)}\varphi -\sum_{j=0}^{k-1}j!c_j\overline{z}^j\ \perp\  \overline{(W^*_k\beta)}z K_{W^*_k\beta}=\overline{K_{W^*_k\beta}},
\end{equation*}
which can also be expressed as
\begin{equation}
\label{eq_aux_8_inv_rel_U_*}
  (W^*_k\beta)\overline{\varphi} -\sum_{j=0}^{k-1}j!\overline{c}_jz^j\ \perp\  K_{W^*_k\beta}.
\end{equation}
Summing up, $U=U_{\varphi}^{\alpha,\beta}$ satisfies \eqref{eq_inv_rel_U_*} if and only if there exist $c_0,c_1,\ldots,c_{k-1}$ such that \eqref{eq_aux_4_inv_rel_U_*} and \eqref{eq_aux_8_inv_rel_U_*} hold, that is,
  $$(W^*_k\beta)\overline{\varphi}-\sum_{j=0}^{k-1}j!\overline{c}_jz^j\ \perp\  \textrm{span}\{K_\alpha, K_{W^*_k\beta}\}=K_{\textrm{lcm}(\alpha, W^*_k\beta)}.$$
Equivalently,
  $$(W^*_k\beta)\overline{\varphi} \in K_{z^k}+ \overline{zH^2}+\textrm{lcm}(\alpha, W^*_k\beta)H^2.$$
Thus, $U_{\varphi}^{\alpha,\beta}$ satisfies \eqref{eq_inv_rel_U_*} if and only if
\begin{align*}
  \overline{\varphi}\in \ &\overline{(W^*_k\beta)}K_{z^k} +\overline{(W^*_k\beta) z H^2}+\overline{W^*_k\beta}\cdot\textrm{lcm}(\alpha, W^*_k\beta)H^2\\
  &=z^{k-1}\overline{(W^*_k\beta)K_{z^k}} +z^{k-1}\overline{(W^*_k\beta) z^k H^2}+\tfrac{\alpha}{\textrm{gcd}(\alpha, W^*_k\beta)}H^2\\
  &=z^{k-1}\overline{(W^*_k\beta) H^2}+\tfrac{\alpha}{\textrm{gcd}(\alpha, W^*_k\beta)}\left(\textrm{gcd}(\alpha, W^*_k\beta)H^2+
  K_{\textrm{gcd}(\alpha, W^*_k\beta)}\right)\\
  &=z^{k-1}\overline{(W^*_k\beta) H^2}+\alpha H^2+
  \tfrac{\alpha}{\textrm{gcd}(\alpha, W^*_k\beta)}K_{\textrm{gcd}(\alpha, W^*_k\beta)}
\end{align*}
and so
  $$\varphi\in \overline{\alpha H^2}+\overline{z}^{k-1} (W^*_k\beta) H^2+
  \overline{\left(\tfrac{\alpha}{\textrm{gcd}(\alpha, W^*_k\beta)}K_{\textrm{gcd}(\alpha, W^*_k\beta)}\right)},$$
which completes the proof.
\end{proof}

Since $U_{\varphi}^{\alpha,\beta}=0$ for $\varphi\in \overline{\alpha H^2}+\overline{z}^{k-1} (W^*_k\beta) H^2$  and
  $$\overline{\left(\tfrac{\alpha}{\textrm{gcd}(\alpha, W^*_k\beta)}K_{\textrm{gcd}(\alpha, W^*_k\beta)}\right)}
  =\overline{\alpha} \cdot\textrm{gcd}(\alpha, W^*_k\beta) \overline{K_{\textrm{gcd}(\alpha, W^*_k\beta)}}
  =\overline{\alpha} zK_{\textrm{gcd}(\alpha, W^*_k\beta)}$$
we get

\begin{theorem}
\label{thm_inv_rel_U_*}
Let $\alpha,\beta$ be two nonconstant inner functions, let $k\leq\dim K_\alpha$ and let $U:K_\alpha\to K_\beta$ be a bounded linear operator. Then
  $$S^*_\beta U=U(S^*_\alpha)^k$$
if and only if $U\in\mathcal{S}_k(\alpha,\beta)$ and $U=U_{\varphi}^{\alpha,\beta}$ with
\begin{equation*}
  \varphi\in\overline{\alpha} zK_{\textrm{gcd}(\alpha, W^*_k\beta)}.
\end{equation*}
\end{theorem}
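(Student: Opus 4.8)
**Proof plan for Theorem \ref{thm_inv_rel_U_*}.**

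The plan is to combine Theorem \ref{thm_inv_rel_U_*_aux} with two routine reductions. First I would recall, as already noted in the text just before Proposition \ref{prop_U_inv_with_shift_and_backshift} (via \cite[Corollary 8(b)]{BM2}), that any bounded $U:K_\alpha\to K_\beta$ satisfying $S^*_\beta U = U(S^*_\alpha)^k$ automatically lies in $\mathcal{S}_k(\alpha,\beta)$; indeed the adjoint-type defect relation $S^*_\beta U - U(S^*_\alpha)^k = \widetilde{k}_0^\beta\otimes\chi + \sum_{j=0}^{k-1}\psi_j\otimes k_{0,j}^\alpha$ holds with $\chi=0$ and all $\psi_j=0$, which is a special case of the characterization. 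So it suffices to describe which $\varphi\in L^2$ give $U=U_\varphi^{\alpha,\beta}\in\mathcal{S}_k(\alpha,\beta)$ satisfying \eqref{eq_inv_rel_U_*}, and for this Theorem \ref{thm_inv_rel_U_*_aux} already does the work: the condition is
\begin{equation*}
  \varphi\in\overline{\alpha H^2}+\overline{z}^{k-1}(W^*_k\beta)H^2+\overline{\left(\tfrac{\alpha}{\gcd(\alpha, W^*_k\beta)}K_{\gcd(\alpha, W^*_k\beta)}\right)}.
\end{equation*}

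The second reduction is to observe that the first two summands contribute nothing to the operator: for $\varphi\in\overline{\alpha H^2}$ we have $\varphi f\in\overline{\alpha H^2}$ whenever $f\in K_\alpha^\infty\subseteq H^2$, hence $W_k(\varphi f)\in W_k(\overline{\alpha H^2})$; and using \cite[Lemma 2.1]{BM2} together with $P_\beta W_k = W_k P_{W_k^*\beta}$ on the relevant subspace (as in the proof of Corollary \ref{cor_U_inv_with_compr_shift}), one checks $P_\beta W_k(\varphi f)=0$. Similarly, for $\varphi\in\overline{z}^{k-1}(W^*_k\beta)H^2$ one has $\varphi f\in\overline{z}^{k-1}(W^*_k\beta)H^2$, and applying $W_k$ sends this into $(\text{const})\cdot\overline{z}^{?}\cdot\beta H^2$-type space, again killed by $P_\beta$. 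This is exactly the remark made in the text right before Theorem \ref{thm_inv_rel_U} in the analytic case and right before the present theorem: "$U_{\varphi}^{\alpha,\beta}=0$ for $\varphi\in \overline{\alpha H^2}+\overline{z}^{k-1} (W^*_k\beta) H^2$". Therefore the defining subspace can be replaced, modulo the kernel of $\varphi\mapsto U_\varphi^{\alpha,\beta}$, by the third summand alone.

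Finally I would simplify the third summand. Using $C_\alpha g = \alpha\bar z\bar g$ one computes, for an inner divisor $\delta=\gcd(\alpha,W^*_k\beta)$ of $\alpha$, that $\overline{\tfrac{\alpha}{\delta}K_\delta} = \tfrac{\overline\alpha}{\overline\delta}\,\overline{K_\delta} = \overline\alpha\cdot\delta\cdot\overline{K_\delta}$. Since $\overline{K_\delta} = \bar z\, C_\delta(K_\delta) = \bar z K_\delta$ (because $C_\delta$ is a conjugation onto $K_\delta$ and $C_\delta f = \delta\bar z\bar f$ gives $\overline{K_\delta}=\overline{\delta}zK_\delta$, equivalently $\delta\overline{K_\delta}=zK_\delta$), we get $\overline\alpha\cdot\delta\cdot\overline{K_\delta}=\overline\alpha z K_\delta$, which is precisely $\overline\alpha z K_{\gcd(\alpha,W^*_k\beta)}$. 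Combining the three observations yields the stated characterization. The only mildly delicate point — the "main obstacle", such as it is — is verifying cleanly that the two discarded summands annihilate the operator; this is where one must be careful with the interplay of $W_k$, $W_k^*$, $P_\beta$ and the divisibility $W_k^*\beta\mid\,$ (relevant inner function), but it follows the same template as the proof of Corollary \ref{cor_U_inv_with_compr_shift} and the identities $W_k^*W_k = $ projection onto $\overline{\text{span}}\{z^{km}\}$ and $P_\beta W_k = W_k P_{W_k^*\beta}$ recorded in \cite{BM2}.
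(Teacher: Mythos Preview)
Your proposal is correct and follows exactly the paper's own route: invoke Theorem~\ref{thm_inv_rel_U_*_aux}, drop the two summands $\overline{\alpha H^2}$ and $\overline{z}^{k-1}(W_k^*\beta)H^2$ (which lie in the kernel of $\varphi\mapsto U_\varphi^{\alpha,\beta}$), and simplify the remaining term via $\delta\,\overline{K_\delta}=zK_\delta$ to obtain $\overline{\alpha}zK_{\gcd(\alpha,W_k^*\beta)}$. One small correction: the defect identity you attribute to \cite[Corollary~8(b)]{BM2} concerns $S_\beta U-US_\alpha^k$, not $S^*_\beta U-U(S^*_\alpha)^k$; the cleanest way to see that $S^*_\beta U=U(S^*_\alpha)^k$ forces $U\in\mathcal{S}_k(\alpha,\beta)$ is to write $U-S^*_\beta US_\alpha^k=U\big(I-(S^*_\alpha)^kS_\alpha^k\big)$ and apply \eqref{eq_sar_k_conj} to land in the form \eqref{warr}.
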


\begin{corollary}
	Let $\alpha,\beta$ be two nonconstant inner functions, let $k\leq\dim K_\beta$ and let $U:K_\alpha\to K_\beta$ be a bounded linear operator. Then
	$$S_\beta^k U=US_\alpha$$
	if and only if $U^*\in\mathcal{S}_k(\beta,\alpha)$ and $U=(U_{\varphi}^{\beta,\alpha})^*$ with
	\begin{equation*}
	\varphi\in\overline{\beta} zK_{\textrm{gcd}(\beta, W^*_k\alpha)}.
	\end{equation*}
\end{corollary}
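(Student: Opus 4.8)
\emph{Plan.} The statement is a formal consequence of Theorem~\ref{thm_inv_rel_U_*} obtained by passing to Hilbert space adjoints and interchanging the roles of $\alpha$ and $\beta$, exactly as the corollary following Theorem~\ref{thm_inv_rel_U} is deduced from Theorem~\ref{thm_inv_rel_U}. The first step is to note that for a bounded linear operator $U:K_\alpha\to K_\beta$ the relation $S_\beta^kU=US_\alpha$ is equivalent, by taking adjoints of both sides, to $U^*(S_\beta^k)^*=S_\alpha^*U^*$, and since $(S_\beta^k)^*=(S_\beta^*)^k$ this reads
$$
S_\alpha^*U^*=U^*(S_\beta^*)^k .
$$
Here $U^*:K_\beta\to K_\alpha$ is again a bounded linear operator, and the passage $U\mapsto U^*$ is an involution, so no information is lost.

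The second step is to apply Theorem~\ref{thm_inv_rel_U_*} with $\alpha$ and $\beta$ interchanged to the operator $V=U^*:K_\beta\to K_\alpha$; this is legitimate because the hypothesis $k\leq\dim K_\beta$ imposed here is precisely the hypothesis $k\leq\dim K_\alpha$ of Theorem~\ref{thm_inv_rel_U_*} after the interchange. That theorem then gives that $S_\alpha^*V=V(S_\beta^*)^k$ holds if and only if $V\in\mathcal{S}_k(\beta,\alpha)$ and $V=U_\varphi^{\beta,\alpha}$ with $\varphi\in\overline{\beta}\,zK_{\gcd(\beta,W_k^*\alpha)}$. Substituting $V=U^*$ and rewriting the identity $V=U_\varphi^{\beta,\alpha}$ as $U=(U_\varphi^{\beta,\alpha})^*$ yields exactly the asserted equivalence.

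\emph{Anticipated obstacle.} There is essentially no genuine difficulty beyond Theorem~\ref{thm_inv_rel_U_*}; the only points needing care are bookkeeping ones: confirming that $(S_\beta^k)^*=(S_\beta^*)^k$ on $K_\beta$, that adjunction carries bounded operators $K_\alpha\to K_\beta$ bijectively to bounded operators $K_\beta\to K_\alpha$, and that $\mathcal{S}_k(\beta,\alpha)\ni U^*$ is the correct way to record the conclusion (rather than $\mathcal{S}_k(\alpha,\beta)\ni U$, which need not hold). Once these are in place the corollary is immediate.
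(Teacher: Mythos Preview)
Your proposal is correct and matches the paper's intended derivation: the corollary is stated in the paper without proof, placed immediately after Theorem~\ref{thm_inv_rel_U_*}, and is obtained exactly as you describe---by taking adjoints of the relation $S_\beta^k U=US_\alpha$ and applying Theorem~\ref{thm_inv_rel_U_*} with the roles of $\alpha$ and $\beta$ interchanged to $V=U^*$, the hypothesis $k\leq\dim K_\beta$ being precisely what is needed after the swap. This is the same pattern by which the earlier unnamed corollary is deduced from Theorem~\ref{thm_inv_rel_U}.
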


\section{Products of operators from $\mathcal{S}_k(\alpha,\beta)$ with analytic or anti-analytic symbols}
In this Section we consider products of operators with analytic or anti-analytic symbols, which belong to $\mathcal{S}_k(\alpha,\beta)$. We start with some auxiliary results.

Recall that for a nonconstant inner function $\alpha$ we have (see \cite{s}):
\begin{equation}
\label{eq_def_op}
I_{K_{\alpha}}-S_{\alpha}S^*_{\alpha}={k}_0^\alpha\otimes {k}_{0}^\alpha\end{equation}
and\begin{equation}
\label{eq_def_con_op}
I_{K_{\alpha}}-S^*_{\alpha}S_{\alpha}=\widetilde{k}_0^\alpha\otimes \widetilde{k}_{0}^\alpha.
\end{equation}

In what follows we will use the following lemma.
\begin{lemma}
\label{lem_sar_k}
Let $\alpha$ be a nonconstant inner function and let $m\in\mathbb{N}$. Then
\begin{align}
\label{eq_sar_k}
	I_{K_{\alpha}}-S_{\alpha}^m(S^*_{\alpha})^m
	=\sum_{j=0}^{m-1}P_\alpha (z^j)\otimes P_\alpha (z^j)
	=\sum_{j=0}^{m-1}\left(\tfrac{1}{j!}\right)^2\!\!\cdot(k_{0,j}^\alpha\otimes k_{0,j}^\alpha),\\
\label{eq_sar_k_conj}
	I_{K_{\alpha}}-(S^*_{\alpha})^mS_{\alpha}^m
	=\sum_{j=0}^{m-1}C_\alpha P_\alpha (z^j)\otimes C_\alpha P_\alpha (z^j)
    =\sum_{j=0}^{m-1}\left(\tfrac{1}{j!}\right)^2\!\!\cdot( \widetilde{k}_{0,j}^\alpha\otimes \widetilde{k}_{0,j}^\alpha).
	\end{align}
\end{lemma}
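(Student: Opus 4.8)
The plan is to prove \eqref{eq_sar_k} first and then deduce \eqref{eq_sar_k_conj} by conjugation with $C_\alpha$. For \eqref{eq_sar_k}, I would proceed by induction on $m$, the base case $m=1$ being exactly \eqref{eq_def_op}. For the inductive step, write
\[
I_{K_\alpha}-S_\alpha^{m+1}(S_\alpha^*)^{m+1}
=\bigl(I_{K_\alpha}-S_\alpha^m(S_\alpha^*)^m\bigr)
+S_\alpha^m\bigl(I_{K_\alpha}-S_\alpha(S_\alpha^*)\bigr)(S_\alpha^*)^m
=\sum_{j=0}^{m-1}P_\alpha(z^j)\otimes P_\alpha(z^j)
+S_\alpha^m\bigl(k_0^\alpha\otimes k_0^\alpha\bigr)(S_\alpha^*)^m,
\]
using the inductive hypothesis and \eqref{eq_def_op}. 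Since $S_\alpha^m(f\otimes g)(S_\alpha^*)^m=(S_\alpha^m f)\otimes(S_\alpha^m g)$, the last term equals $(S_\alpha^m k_0^\alpha)\otimes(S_\alpha^m k_0^\alpha)$. So it remains to identify $S_\alpha^m k_0^\alpha=P_\alpha(z^m)$. This is the one genuine computation: $k_0^\alpha=P_\alpha 1=1-\alpha(0)\alpha$ by \eqref{kern}, so $S_\alpha^m k_0^\alpha=P_\alpha(z^m k_0^\alpha)=P_\alpha(z^m-\alpha(0)\alpha z^m)=P_\alpha(z^m)$ because $\alpha z^m\in\alpha H^2\perp K_\alpha$. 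That closes the induction and gives the first equality in \eqref{eq_sar_k}.

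For the second equality in \eqref{eq_sar_k}, I would simply note $P_\alpha(z^j)=\tfrac1{j!}P_\alpha(j!z^j)=\tfrac1{j!}P_\alpha k_{0,j}=\tfrac1{j!}k_{0,j}^\alpha$, since $k_{0,j}(z)=\tfrac{j!z^j}{(1-\bar0\cdot z)^{j+1}}=j!z^j$ (the reproducing kernel for the $j$-th derivative at $w=0$). Substituting into $P_\alpha(z^j)\otimes P_\alpha(z^j)=\bigl(\tfrac1{j!}\bigr)^2 k_{0,j}^\alpha\otimes k_{0,j}^\alpha$ gives the stated form.

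Finally, for \eqref{eq_sar_k_conj}, apply the conjugation $C_\alpha$. Since $C_\alpha$ is an antilinear isometric involution of $K_\alpha$ and $C_\alpha S_\alpha C_\alpha=S_\alpha^*$ (so $C_\alpha S_\alpha^m(S_\alpha^*)^m C_\alpha=(S_\alpha^*)^m S_\alpha^m$), conjugating the operator identity \eqref{eq_sar_k} yields
\[
I_{K_\alpha}-(S_\alpha^*)^m S_\alpha^m
=\sum_{j=0}^{m-1} C_\alpha\bigl(P_\alpha(z^j)\otimes P_\alpha(z^j)\bigr)C_\alpha
=\sum_{j=0}^{m-1} \bigl(C_\alpha P_\alpha(z^j)\bigr)\otimes\bigl(C_\alpha P_\alpha(z^j)\bigr),
\]
using the general fact that for a conjugation $C$ one has $C(f\otimes g)C=(Cf)\otimes(Cg)$. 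Since $C_\alpha P_\alpha(z^j)=\tfrac1{j!}C_\alpha k_{0,j}^\alpha=\tfrac1{j!}\widetilde{k}_{0,j}^\alpha$, this is exactly \eqref{eq_sar_k_conj}. The only step requiring any care is the algebra of rank-one operators under $S_\alpha^m(\cdot)(S_\alpha^*)^m$ and under conjugation, together with the identification $S_\alpha^m k_0^\alpha=P_\alpha(z^m)$; everything else is bookkeeping.
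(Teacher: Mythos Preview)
Your proof is correct and follows essentially the same route as the paper: the paper writes $I_{K_\alpha}-S_\alpha^m(S_\alpha^*)^m$ as the telescoping sum $\sum_{j=0}^{m-1}\bigl(S_\alpha^j(S_\alpha^*)^j-S_\alpha^{j+1}(S_\alpha^*)^{j+1}\bigr)$ rather than arguing by induction, but both presentations hinge on the same computation $S_\alpha^j k_0^\alpha=P_\alpha(z^j)$ and the same conjugation step for \eqref{eq_sar_k_conj}. One trivial slip to fix: $k_0^\alpha=1-\overline{\alpha(0)}\,\alpha$, not $1-\alpha(0)\alpha$ (this does not affect the argument since $\alpha z^m\in\alpha H^2$ either way).
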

\begin{proof}
To prove \eqref{eq_sar_k} observe that from \eqref{eq_def_op} we have
	$$S_{\alpha}^j(S^*_{\alpha})^j-S_{\alpha}^{j+1}(S^*_{\alpha})^{j+1}=\left(S_{\alpha}^j{k}_0^\alpha\right)\otimes \left(S_{\alpha}^j{k}_{0}^\alpha\right)$$
for any nonnegative integer $j$. Adding the above equalities for $j=0,1,\ldots,m-1$ we get
\begin{align*}
	I_{K_{\alpha}}-S_{\alpha}^m(S^*_{\alpha})^m
	&=\sum_{j=0}^{m-1}(S_{\alpha}^{j} k_0^\alpha)\otimes (S_{\alpha}^{j} k_0^\alpha),
\end{align*}
where $S_{\alpha}^0{k}_0^\alpha={k}_0^\alpha$. Moreover, for any positive integer $j$,
\begin{equation}
\label{eq_sar_k_aux}
	S_{\alpha}^{j} k_0^\alpha=A^{\alpha}_{z^j} k_0^\alpha=P_\alpha (z^j-\overline{\alpha(0)}\alpha z^j)
    =P_\alpha (z^j)=\tfrac{1}{j!}\cdot P_\alpha (j!z^j)
	=\tfrac{1}{j!}\cdot k_{0,j}^\alpha
\end{equation}
and thus \eqref{eq_sar_k} holds.
	To prove \eqref{eq_sar_k_conj} we apply $C_{\alpha}$ to \eqref{eq_sar_k} and use $C_\alpha$-symmetry of $S_\alpha$ ($C_\alpha S_\alpha C_\alpha=S_\alpha^*$, see \cite{gp}).
\end{proof}

Let $k$ and $m$ be two arbitrary fixed positive integers.
\begin{proposition}
	\label{thm_product_S_km}
	Let $\alpha$, $\beta$ and $\gamma$ be nonconstant inner functions and let $\varphi,\psi\in H^2$.
	\begin{enumerate}
		\item[(a)] Assume that $U_{\varphi}^{\beta,\gamma}\in\mathcal{S}_m(\beta,\gamma)$ and $U_{\psi}^{\alpha,\beta}\in\mathcal{S}_k(\alpha,\beta)$. If $W_m^*\gamma\leq\beta$ and $W_k^*\beta\leq\alpha$, then $U=U_{\varphi}^{\beta,\gamma}U_{\psi}^{\alpha,\beta}\in\mathcal{S}_{km}(\alpha,\gamma)$ and $U=U_{\eta}^{\alpha,\gamma}$ with
		$$\eta=\sum_{j=0}^{km-1}\frac1{j!}(W_{km}^*U k_{0,j}^\alpha) \overline{z}^j.$$
		
		\item[(b)] Assume that $U_{\overline{\varphi}}^{\beta,\gamma}\in\mathcal{S}_m(\beta,\gamma)$ and $U_{\overline{\psi}}^{\alpha,\beta}\in\mathcal{S}_k(\alpha,\beta)$. If $\alpha\leq W_k^*\beta$ and $\beta\leq W_m^*\gamma$, then $U=U_{\overline{\varphi}}^{\beta,\gamma}U_{\overline{\psi}}^{\alpha,\beta}\in\mathcal{S}_{km}(\alpha,\gamma)$ and $U=U_{\zeta}^{\alpha,\gamma}$ with
		$$\zeta=\overline{\alpha}\sum_{j=0}^{km-1}\frac{1}{j!} (W_{km}^*U\widetilde{k}_{0,j}^\alpha)z^{j+1}.$$
	\end{enumerate}
\end{proposition}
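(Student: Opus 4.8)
The plan is to verify that $U=U_{\varphi}^{\beta,\gamma}U_{\psi}^{\alpha,\beta}$ satisfies the intertwining relation $S_\gamma U = U S_\alpha^{km}$ of Theorem~\ref{thm_inv_rel_U}, and then read off the symbol from the conclusion of that theorem. For part~(a), I would first invoke Corollary~\ref{cor_U_inv_with_compr_shift}(a): since $W_k^*\beta\leq\alpha$ and $U_\psi^{\alpha,\beta}\in\mathcal{S}_k(\alpha,\beta)$, we have $S_\beta U_\psi^{\alpha,\beta}=U_\psi^{\alpha,\beta}S_\alpha^k$; similarly $S_\gamma U_\varphi^{\beta,\gamma}=U_\varphi^{\beta,\gamma}S_\beta^m$ from $W_m^*\gamma\leq\beta$. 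Composing these,
$$
S_\gamma U = S_\gamma U_\varphi^{\beta,\gamma}U_\psi^{\alpha,\beta}
= U_\varphi^{\beta,\gamma} S_\beta^m U_\psi^{\alpha,\beta}
= U_\varphi^{\beta,\gamma} S_\beta^{m-1}\big(S_\beta U_\psi^{\alpha,\beta}\big) S_\alpha^{?}\ldots
$$
so I need to iterate $S_\beta U_\psi^{\alpha,\beta}=U_\psi^{\alpha,\beta}S_\alpha^k$ carefully: $S_\beta^m U_\psi^{\alpha,\beta}=U_\psi^{\alpha,\beta}S_\alpha^{km}$ follows by an easy induction on $m$ (each application of $S_\beta$ on the left pulls out one factor $S_\alpha^k$ on the right). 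Hence $S_\gamma U = U_\varphi^{\beta,\gamma}U_\psi^{\alpha,\beta}S_\alpha^{km}=US_\alpha^{km}$.

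Next I must check that $U$ actually lies in $\mathcal{S}_{km}(\alpha,\gamma)$, which is needed before applying Theorem~\ref{thm_inv_rel_U}; equivalently (by Theorem~\ref{thm_inv_rel_U} itself, whose ``if'' direction already gives membership once we exhibit a symbol, but to be safe) I would note that $U$ is bounded and satisfies $S_\gamma U=US_\alpha^{km}$, and the structure theorem \cite[Corollary~8(b)]{BM2} quoted in Section~3 shows any operator satisfying such a relation belongs to $\mathcal{S}_{km}(\alpha,\gamma)$. Theorem~\ref{thm_inv_rel_U} then guarantees $U=U_\eta^{\alpha,\gamma}$ for some $\eta\in\overline{z}^{km-1}\tfrac{W_{km}^*\gamma}{\gcd(\alpha,W_{km}^*\gamma)}K_{\gcd(\alpha,W_{km}^*\gamma)}$. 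To identify $\eta$ explicitly, I would use the formula for the action of $U_\eta^{\alpha,\gamma}$ on kernel functions together with the inversion $f=\sum_{j=0}^{km-1}z^jW_{km}^*W_{km}(\overline{z}^jf)$ already established in the text: applying $W_{km}^*W_{km}$ componentwise to $\alpha\eta z^{km-1}$ recovers $\eta$ from the values $W_{km}^*(Uk_{0,j}^\alpha)$, since $U_\eta^{\alpha,\gamma}k_{0,j}^\alpha = P_\gamma W_{km}(\eta\cdot\tfrac{1}{j!}\cdot j!z^j)$ relates to the $j$-th slice of $\eta$. Carrying this bookkeeping through yields $\eta=\sum_{j=0}^{km-1}\tfrac1{j!}(W_{km}^*Uk_{0,j}^\alpha)\overline{z}^j$.

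Part~(b) is entirely parallel but uses the backward-shift relations: Corollary~\ref{cor_U_inv_with_compr_shift}(b) gives $S_\beta^* U_{\overline{\psi}}^{\alpha,\beta}=U_{\overline{\psi}}^{\alpha,\beta}(S_\alpha^*)^k$ from $\alpha\leq W_k^*\beta$, and $S_\gamma^* U_{\overline{\varphi}}^{\beta,\gamma}=U_{\overline{\varphi}}^{\beta,\gamma}(S_\beta^*)^m$ from $\beta\leq W_m^*\gamma$; iterating and composing as above gives $S_\gamma^* U = U(S_\alpha^*)^{km}$, whence by Theorem~\ref{thm_inv_rel_U_*} we get $U=U_\zeta^{\alpha,\gamma}$ with $\zeta\in\overline{\alpha}zK_{\gcd(\alpha,W_{km}^*\gamma)}$; the explicit form follows by the analogous kernel-function computation, now with conjugate kernels $\widetilde{k}_{0,j}^\alpha$ in place of $k_{0,j}^\alpha$ and the extra factor $\overline{\alpha}z^{j+1}$ accounting for the shape of the symbol space. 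The main obstacle I anticipate is not the intertwining step, which is a clean induction plus composition, but the final \emph{identification of the symbol}: one must confirm that the candidate $\eta$ (resp.\ $\zeta$) indeed lies in the symbol class dictated by Theorem~\ref{thm_inv_rel_U} (resp.\ Theorem~\ref{thm_inv_rel_U_*}) and that $U_\eta^{\alpha,\gamma}=U$ rather than merely $U_\eta^{\alpha,\gamma}$ agreeing with $U$ up to an operator annihilated by the relevant projections — this requires the uniqueness-of-symbol bookkeeping via the decomposition $L^2=\bigoplus_{j=0}^{km-1}z^jW_{km}^*L^2$ used throughout Section~3.
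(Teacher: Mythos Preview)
Your intertwining step is correct and is exactly what the paper uses: from Corollary~\ref{cor_U_inv_with_compr_shift}(a) one gets $S_\gamma U_\varphi^{\beta,\gamma}=U_\varphi^{\beta,\gamma}S_\beta^m$ and $S_\beta U_\psi^{\alpha,\beta}=U_\psi^{\alpha,\beta}S_\alpha^k$, and iterating gives $S_\gamma U=US_\alpha^{km}$ (and the $*$-version for part~(b)).

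Where you diverge from the paper is in the second half. The paper does \emph{not} pass through Theorem~\ref{thm_inv_rel_U}. Instead it computes the defect
\[
U-S_\gamma U(S_\alpha^*)^{km}=U\bigl(I_{K_\alpha}-S_\alpha^{km}(S_\alpha^*)^{km}\bigr)
\]
(the intertwining you already established is precisely what collapses the left-hand side to this form) and then applies Lemma~\ref{lem_sar_k}, which gives $I_{K_\alpha}-S_\alpha^{km}(S_\alpha^*)^{km}=\sum_{j=0}^{km-1}(j!)^{-2}k_{0,j}^\alpha\otimes k_{0,j}^\alpha$. This puts $U-S_\gamma U(S_\alpha^*)^{km}$ directly into the form~\eqref{warr2} with $\chi=0$ and $\psi_j=(j!)^{-2}Uk_{0,j}^\alpha$, so membership in $\mathcal{S}_{km}(\alpha,\gamma)$ and the symbol formula both fall out in one line from~\eqref{ox}. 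Part~(b) is the same with~\eqref{eq_sar_k_conj} and~\eqref{warr}/\eqref{2ox}.

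Your route has two genuine weak points. First, Theorem~\ref{thm_inv_rel_U} carries the hypothesis $km\leq\dim K_\alpha$, which Proposition~\ref{thm_product_S_km} does not assume; you would need a separate argument in the low-dimensional case. Second, and more importantly, the step ``carrying this bookkeeping through yields $\eta=\ldots$'' is not a proof: Theorem~\ref{thm_inv_rel_U} only hands you \emph{some} symbol in $\overline{z}^{km-1}\tfrac{W_{km}^*\gamma}{\gcd(\alpha,W_{km}^*\gamma)}K_{\gcd(\alpha,W_{km}^*\gamma)}$, and recovering the explicit formula from kernel evaluations $U_\eta^{\alpha,\gamma}k_{0,j}^\alpha$ is not the clean inversion you sketch (note $k_{0,j}^\alpha=P_\alpha(j!z^j)$, not $j!z^j$, so the slicing argument $f=\sum z^jW_{km}^*W_{km}(\overline z^jf)$ does not apply directly to $\eta\cdot k_{0,j}^\alpha$). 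The paper's use of Lemma~\ref{lem_sar_k} together with the ready-made symbol formula~\eqref{ox} bypasses all of this; that is the missing idea in your proposal.
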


Before giving a proof of Proposition \ref{thm_product_S_km} note that if a bounded linear operator $U:K_\alpha\to K_\beta$ satisfies \eqref{warr} with some $\chi\in K_\alpha$ and $\psi_0,\ldots,\psi_{k-1}\in K_\beta$, then $U=U_{\varphi}^{\alpha,\beta}$ with
\begin{equation}\label{2ox}
\varphi=(W_k^*\beta)\overline{z}^k\overline{\chi} +\overline{\alpha}\sum_{j=0}^{k-1} (W_k^*\psi_{j})j!z^{j+1}
\end{equation}
(\cite[Theorem 1]{BM2}).

Similarly, operators from $\mathcal{S}_k(\alpha,\beta)$ can be characterized as follows (\cite[Corollary 7]{BM2}): $U\in\mathcal{S}_k(\alpha,\beta)$ if and only if there exist functions $\chi\in K_\alpha$ and $\psi_0,\ldots,\psi_{k-1}\in K_\beta$ such that
\begin{equation}\label{warr2}
 U-S_\beta U(S_\alpha^*)^k=k_0^\beta\otimes \chi +\sum_{j=0}^{k-1} \psi_j\otimes k_{0,j}^\alpha.
 \end{equation}
In that case, $U=U_{\varphi}^{\alpha,\beta}$ with

\begin{equation}\label{ox}
\varphi=\overline{\chi} +\sum_{j=0}^{k-1} (W_k^*\psi_{j})j!\overline{z}^j
\end{equation}
(\cite[Corollary 4]{BM2}).

\begin{proof}[Proof of Proposition \ref{thm_product_S_km}]
(a) Let $U=U_{\varphi}^{\beta,\gamma}U_{\psi}^{\alpha,\beta}$. By Corollary \ref{cor_U_inv_with_compr_shift}(a) and formula \eqref{eq_sar_k} (Lemma \ref{lem_sar_k}) we have
\begin{align*}
U -S_{\gamma}U(S^*_{\alpha})^{km}&=
  U_{\varphi}^{\beta,\gamma}U_{\psi}^{\alpha,\beta} -S_{\gamma}U_{\varphi}^{\beta,\gamma}U_{\psi}^{\alpha,\beta}(S^*_{\alpha})^{km}=U_{\varphi}^{\beta,\gamma}U_{\psi}^{\alpha,\beta} -U_{\varphi}^{\beta,\gamma}S^m_{\beta}U_{\psi}^{\alpha,\beta}(S^*_{\alpha})^{km}\\
  &=U_{\varphi}^{\beta,\gamma}U_{\psi}^{\alpha,\beta} -U_{\varphi}^{\beta,\gamma}U_{\psi}^{\alpha,\beta}S_{\alpha}^{km}(S^*_{\alpha})^{km}
  =U(I_{K_\alpha}-S_{\alpha}^{km}(S^*_{\alpha})^{km})\\
  &=U\left(\sum_{j=0}^{km-1}\left(\frac{1}{j!}\right)^2 (k_{0,j}^\alpha\otimes k_{0,j}^\alpha)\right)=\sum_{j=0}^{km-1}\left(\frac{1}{j!}\right)^2 (Uk_{0,j}^\alpha\otimes k_{0,j}^\alpha).
\end{align*}
Therefore  $U=U_{\varphi}^{\beta,\gamma}U_{\psi}^{\alpha,\beta}= U_{\eta}^{\alpha,\gamma}\in\mathcal{S}_{km}(\alpha,\gamma)$ (see \eqref{warr2}) where, by \eqref{ox},
\begin{equation*}
\eta=\sum_{j=0}^{km-1}\frac{1}{j!}  (W_{km}^*Uk_{0,j}^\alpha)\overline{z}^j.
\end{equation*}

(b) Let $U=U_{\overline{\varphi}}^{\beta,\gamma}U_{\overline{\psi}}^{\alpha,\beta}$. By Corollary \ref{cor_U_inv_with_compr_shift}(b) and formula \eqref{eq_sar_k_conj},
\begin{align*}
U -S^*_{\gamma}US_{\alpha}^{km}&=U_{\overline{\varphi}}^{\beta,\gamma}U_{\overline{\psi}}^{\alpha,\beta} -S^*_{\gamma}U_{\overline{\varphi}}^{\beta,\gamma} U_{\overline{\psi}}^{\alpha,\beta}S_{\alpha}^{km}=U_{\overline{\varphi}}^{\beta,\gamma}U_{\overline{\psi}}^{\alpha,\beta} -U_{\overline{\varphi}}^{\beta,\gamma} U_{\overline{\psi}}^{\alpha,\beta}(S^*_{\alpha})^{km} S_{\alpha}^{km}\\
&  =U (I-(S^*_{\alpha})^{km} S_{\alpha}^{km})=\sum_{j=0}^{km-1}\left(\frac{1}{j!}\right)^2(U\widetilde{k}_{0,j}^\alpha\otimes \widetilde{k}_{0,j}^\alpha).
\end{align*}
Thus $U_{\overline{\varphi}}^{\beta,\gamma}U_{\overline{\psi}}^{\alpha,\beta}= U_{\zeta}^{\alpha,\gamma}\in\mathcal{S}_{km}(\alpha,\gamma)$ (see \eqref{warr}) where, by \eqref{2ox},
\begin{equation*}
\zeta=\overline{\alpha}\sum_{j=0}^{km-1}\frac{1}{j!} (W_{km}^*U\widetilde{k}_{0,j}^\alpha)z^{j+1}.
\end{equation*}
\end{proof}

\begin{corollary}
	Let $\alpha$, $\beta$ and $\gamma$ be nonconstant inner functions and let $\varphi,\psi\in H^2$.
	\begin{enumerate}
		\item[(a)] Assume that $U_{\varphi}^{\beta,\gamma}\in\mathcal{S}_m(\beta,\gamma)$ and $U_{\psi}^{\alpha,\beta}\in\mathcal{S}_k(\alpha,\beta)$. If $W_m^*\gamma\leq\beta$ and $W_k^*\beta\leq\alpha$, then $U=U_{\varphi}^{\beta,\gamma}U_{\psi}^{\alpha,\beta}\in\mathcal{S}_{km}(\alpha,\gamma)$ and $U=U_{\eta}^{\alpha,\gamma}$ with
		$$\eta=\sum_{j=0}^{km-1}W_{km}^*W_{km}P_{W_{km}^*\gamma}\big((W_{k}^*\varphi)P_{W_{k}^*\beta}(\psi z^j)\big)\cdot \overline{z}^j.$$
		\item[(b)] Assume that $U_{\overline{\varphi}}^{\beta,\gamma}\in\mathcal{S}_m(\beta,\gamma)$ and $U_{\overline{\psi}}^{\alpha,\beta}\in\mathcal{S}_k(\alpha,\beta)$. If $\alpha\leq W_k^*\beta$ and $\beta\leq W_m^*\gamma$, then $U=U_{\overline{\varphi}}^{\beta,\gamma}U_{\overline{\psi}}^{\alpha,\beta}\in\mathcal{S}_{km}(\alpha,\gamma)$ and $U=U_{\zeta}^{\alpha,\gamma}$ with
		 $$\zeta=\overline{\alpha}\sum_{j=0}^{km-1}W_{km}^*W_{km}P_{W_{km}^*\gamma}\big(\overline{(W_{k}^*\varphi)}\overline{\psi}P_{\alpha}(\alpha\overline{z}^{j+1})\big)\cdot z^{j+1}.$$
	\end{enumerate}
\end{corollary}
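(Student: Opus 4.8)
The strategy is to take the abstract formulas for $\eta$ and $\zeta$ already produced in Proposition~\ref{thm_product_S_km} and simply compute the quantities $W_{km}^*Uk_{0,j}^\alpha$ and $W_{km}^*U\widetilde{k}_{0,j}^\alpha$ explicitly, using that $U$ is the composition $U_\varphi^{\beta,\gamma}U_\psi^{\alpha,\beta}$ with $\varphi,\psi\in H^2$. The main tool is the identity $P_\beta W_k(\psi f)=W_k P_{W_k^*\beta}(\psi\,W_k^*W_k f\cdot\text{(shift)})$ type relations from \cite[Lemma 2.1]{BM2}, which let us transport everything through $W_k$ and $W_m$ and collapse the composition of two slant-type compressions into a single one at level $km$.

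First I would treat part (a). Write $f=k_{0,j}^\alpha$ for brevity; since $\psi\in H^2$ and $f\in K_\alpha\subset H^2$ we have $\psi f\in H^2$, so $U_\psi^{\alpha,\beta}f=P_\beta W_k(\psi f)=W_k P_{W_k^*\beta}(\psi f)$ after passing through the factorization $W_k M_\psi$ and the commutation rule $P_\beta W_k=W_k P_{W_k^*\beta}$ valid on $H^2$ (here is where $W_k^*\beta\leq\alpha$, equivalently the relevant inner functions are genuinely inner, is used to make sense of the model spaces). Applying $U_\varphi^{\beta,\gamma}=W_mM_\varphi$ composed with $P_\gamma$ and again using $P_\gamma W_m = W_m P_{W_m^*\gamma}$, one gets
$$
Uf = W_m P_{W_m^*\gamma}\big(\varphi\, W_k P_{W_k^*\beta}(\psi f)\big).
$$
Now pull the inner $W_k$ out: $\varphi\cdot W_k(g) = W_k\big((W_k^*\varphi)\,g\big)$ is false in general, but $W_k$ intertwines multiplication by $z^k$ with multiplication by $z$, so after inserting $W_k^*W_k$-projections and the block decomposition $L^2=\bigoplus_{j=0}^{k-1}z^jW_k^*L^2$ one identifies $W_m^*\circ W_m P_{W_m^*\gamma}(\varphi W_k(\cdot))$ with $W_{km}^*W_{km}P_{W_{km}^*\gamma}\big((W_k^*\varphi)(\cdot)\big)$ acting at level $km$, because $W_m W_k = W_{km}$. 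Feeding $g=P_{W_k^*\beta}(\psi f)=P_{W_k^*\beta}(\psi z^j\cdot\text{scalar})$ (recall $k_{0,j}^\alpha$ differs from $P_\alpha(j!z^j)$ only by the normalization $j!$, which is exactly cancelled by the $\tfrac1{j!}$ in the formula for $\eta$) and replacing $W_k^*\beta$-orthogonal projection of $\psi z^j$ by $P_{W_k^*\beta}(\psi z^j)$ — legitimate since $\psi\in H^2$ so no anti-analytic junk appears — yields precisely
$$
\tfrac1{j!}W_{km}^*U k_{0,j}^\alpha = W_{km}^*W_{km}P_{W_{km}^*\gamma}\big((W_k^*\varphi)\,P_{W_k^*\beta}(\psi z^j)\big),
$$
and substituting into $\eta=\sum_j \tfrac1{j!}(W_{km}^*Uk_{0,j}^\alpha)\overline z^{\,j}$ gives the claimed formula.

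For part (b) the computation is the conjugate/anti-analytic mirror: with $\varphi,\psi\in H^2$ but symbols $\overline\varphi,\overline\psi$, the identities $P_\beta W_k(\overline\psi f)=W_k P_{W_k^*\beta}(\overline\psi\cdots)$ now produce anti-analytic factors, and the roles of $k_{0,j}^\alpha$ and $\widetilde k_{0,j}^\alpha=C_\alpha k_{0,j}^\alpha$ are swapped. Using $C_\alpha$-symmetry and the fact that $\widetilde k_{0,j}^\alpha$ is, up to the $j!$ normalization, $P_\alpha(\alpha\overline z^{\,j+1})$ (from the formula $\widetilde k_w^\alpha(z)=(\alpha(z)-\alpha(w))/(z-w)$ and its derivatives at $w=0$), one repeats the pull-through argument with $W_{km}=W_mW_k$ to get
$$
\tfrac1{j!}W_{km}^*U\widetilde k_{0,j}^\alpha = W_{km}^*W_{km}P_{W_{km}^*\gamma}\big(\overline{(W_k^*\varphi)}\,\overline\psi\, P_\alpha(\alpha\overline z^{\,j+1})\big),
$$
and plugging into $\zeta=\overline\alpha\sum_j\tfrac1{j!}(W_{km}^*U\widetilde k_{0,j}^\alpha)z^{j+1}$ finishes it. The hypotheses $\alpha\leq W_k^*\beta$ and $\beta\leq W_m^*\gamma$ are what make the intermediate projections behave, exactly as in Corollary~\ref{cor_U_inv_with_compr_shift}(b).

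The step I expect to be the genuine obstacle is the pull-through identity $W_m P_{W_m^*\gamma}\big(\varphi\,W_k(\cdot)\big) = W_{km}^*$-adjoint of $W_{km}P_{W_{km}^*\gamma}\big((W_k^*\varphi)(\cdot)\big)$ — i.e.\ correctly tracking how multiplication by $\varphi$ interacts with the non-surjective isometry $W_k$ and how the two-level block decomposition reassembles into a single $km$-level one. This requires the $W_k$-arithmetic from \cite{BM2} ($W_{km}=W_mW_k$, $W_k^*(W_k^*\gamma)\cdot$, and $W_k M_{z^k}=M_z W_k$) to be applied carefully, and the bookkeeping of the $\tfrac1{j!}$ factors and the shift by $z^j$ versus $\overline z^{\,j}$ must be done without sign or index errors; everything else is a routine substitution into the already-established formulas of Proposition~\ref{thm_product_S_km}.
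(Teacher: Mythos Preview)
Your plan is correct and follows the same overall strategy as the paper: both substitute into Proposition~\ref{thm_product_S_km} after computing $\tfrac{1}{j!}Uk_{0,j}^\alpha$ (resp.\ $\tfrac{1}{j!}U\widetilde{k}_{0,j}^\alpha$) explicitly. The mechanics differ slightly: the paper computes these quantities by duality, pairing against a test function $f\in K_\gamma^\infty$ and shuffling adjoints (using in particular $W_k^*M_{\overline\varphi}=M_{\overline{W_k^*\varphi}}W_k^*$), whereas you propose a direct forward computation.

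Your worry about the pull-through step is unfounded: the clean identity you need is $W_m M_\varphi W_k = W_{km} M_{W_k^*\varphi}$, which holds on all of $L^2$ (check it on monomials $z^l$, or simply take the adjoint of the identity the paper uses). Hence $P_\gamma W_m(\varphi\,W_k g)=P_\gamma W_{km}\big((W_k^*\varphi)g\big)=W_{km}P_{W_{km}^*\gamma}\big((W_k^*\varphi)g\big)$ directly, with no block decomposition required. One small correction on the role of the hypotheses: $P_\beta W_k=W_kP_{W_k^*\beta}$ holds on $L^2$ unconditionally; the assumption $W_k^*\beta\leq\alpha$ is used instead to replace $P_\alpha(z^j)$ by $z^j$ inside $P_{W_k^*\beta}(\psi\,\cdot\,)$, since $\psi\big(z^j-P_\alpha(z^j)\big)\in\alpha H^2\subset(W_k^*\beta)H^2$. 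The analogous remark applies to $\alpha\leq W_k^*\beta$ in part~(b).
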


\begin{proof}
	(a) Let $U=U_{\varphi}^{\beta,\gamma}U_{\psi}^{\alpha,\beta}$, where $\varphi,\psi\in H^2$,  $U_{\varphi}^{\beta,\gamma}\in\mathcal{S}_m(\beta,\gamma)$ and $U_{\psi}^{\alpha,\beta}\in\mathcal{S}_k(\alpha,\beta)$. Then for each $f\in K_{\gamma}^\infty$,
	
	\begin{align*}
	\langle \tfrac1{j!}U k_{0,j}^\alpha ,f\rangle&=\langle U_{\varphi}^{\beta,\gamma}U_{\psi}^{\alpha,\beta} P_\alpha(z^j) ,f\rangle=\langle U_{\psi}^{\alpha,\beta} P_\alpha(z^j) ,(U_{\varphi}^{\beta,\gamma})^*f\rangle\\
	&=\langle P_{\beta}W_k(\psi P_\alpha(z^j)) ,P_{\beta}(\overline{\varphi}W_m^*f)\rangle=\langle \psi P_\alpha(z^j) ,W_k^*P_{\beta}(\overline{\varphi}W_m^*f)\rangle\\
	&=\langle  P_\alpha(z^j) ,\overline{\psi}P_{W_k^*\beta}W_k^*(\overline{\varphi}W_m^*f)\rangle=\langle z^j , P_\alpha\big(\overline{\psi}P_{W_k^*\beta}(\overline{(W_k^*\varphi)}W_k^*W_m^*f)\big)\rangle.
	\end{align*}
	Recall that $W_k^*\beta\leq\alpha$, which implies that $P_{W_k^*\beta}(\overline{(W_k^*\varphi)}W_k^*W_m^*f)\in K_\alpha$. Since $\psi\in H^2$, we see that $P\big(\overline{\psi}P_{W_k^*\beta}(\overline{(W_k^*\varphi)}W_k^*W_m^*f)\big)$ is orthogonal to $\alpha H^2$ and so	
	 $$P_\alpha\big(\overline{\psi}P_{W_k^*\beta}(\overline{(W_k^*\varphi)}W_k^*W_m^*f)\big)=P\big(\overline{\psi}P_{W_k^*\beta}(\overline{(W_k^*\varphi)}W_k^*W_m^*f)\big).$$
	Hence, using the fact that $W_k^*W_m^*=W_{km}^*$, we obtain
	
	\begin{align*}
	\langle \tfrac1{j!}U k_{0,j}^\alpha ,f\rangle&=\langle z^j , P\big(\overline{\psi}P_{W_k^*\beta}(\overline{(W_k^*\varphi)}W_{km}^*f)\big)\rangle=\langle\psi z^j , P_{W_k^*\beta}(\overline{(W_k^*\varphi)}W_{km}^*f)\rangle\\
	&=\langle (W_k^*\varphi)P_{W_k^*\beta}(\psi z^j) , W_{km}^*f\rangle=\langle P_{\gamma}W_{km}\big((W_k^*\varphi)P_{W_k^*\beta}(\psi z^j)\big) , f\rangle.
	\end{align*}
	Thus
	\begin{align*}
	\frac{1}{j!}  W_{km}^*Uk_{0,j}^\alpha&=W_{km}^*P_{\gamma}W_{km}\big((W_k^*\varphi)P_{W_k^*\beta}(\psi z^j)\big)\\
	&=W_{km}^*W_{km}P_{W_{km}^*\gamma}\big((W_k^*\varphi)P_{W_k^*\beta}(\psi z^j)\big).
	\end{align*}
	By Proposition \ref{thm_product_S_km}(a), $U= U_{\eta}^{\alpha,\gamma}\in\mathcal{S}_{km}(\alpha,\gamma)$ with
	\begin{equation*}
	\eta=\sum_{j=0}^{km-1}W_{km}^*W_{km}P_{W_{km}^*\gamma}\big((W_m^*\varphi)P_{W_k^*\beta}(\psi z^j)\big)\cdot\overline{z}^j.
	\end{equation*}
	
	(b) Let $U=U_{\overline{\varphi}}^{\beta,\gamma}U_{\overline{\psi}}^{\alpha,\beta}$, where $\varphi,\psi\in H^2$,  $U_{\overline{\varphi}}^{\beta,\gamma}\in\mathcal{S}_m(\beta,\gamma)$ and $U_{\overline{\psi}}^{\alpha,\beta}\in\mathcal{S}_k(\alpha,\beta)$. Then for each $f\in K_{\gamma}^\infty$,
	
	\begin{align*}
	\langle \tfrac1{j!}U \widetilde{k}_{0,j}^\alpha ,f\rangle&=\langle U_{\overline{\varphi}}^{\beta,\gamma}U_{\overline{\psi}}^{\alpha,\beta} C_\alpha P_\alpha(z^j) ,f\rangle=\langle U_{\overline{\psi}}^{\alpha,\beta} C_\alpha P_\alpha(z^j) ,(U_{\overline{\varphi}}^{\beta,\gamma})^*f\rangle\\
	&=\langle P_{\beta}W_k(\overline{\psi}C_\alpha P_\alpha(z^j)) ,P_{\beta}({\varphi}W_m^*f)\rangle=\langle \overline{\psi} C_\alpha P_\alpha(z^j) ,W_k^*P_{\beta}(\varphi W_m^*f)\rangle\\
&=\langle \overline{\psi} C_\alpha P_\alpha(z^j),P_{W_k^*\beta}W_k^*({\varphi}W_m^*f)\rangle=\langle P_{W_k^*\beta}\big(\overline{\psi} C_\alpha P_\alpha(z^j)\big),(W_k^*{\varphi})W_k^*W_m^*f\rangle.
 	\end{align*}
	Here $\alpha\leq W_k^*\beta$, which implies that $$P_{W_k^*\beta}\big(\overline{\psi} C_\alpha P_\alpha(z^j)\big)=P\big(\overline{\psi} C_\alpha P_\alpha(z^j)\big),$$
	and so
	\begin{align*}
	\langle \tfrac1{j!}U \widetilde{k}_{0,j}^\alpha ,f\rangle&=\langle P\big(\overline{\psi} C_\alpha P_\alpha(z^j)\big),(W_k^*{\varphi})W_k^*W_m^*f\rangle=\langle \overline{(W_k^*{\varphi})}\overline{\psi} C_\alpha P_\alpha(z^j),W_{km}^*f\rangle\\
	&=\langle P_{\gamma}W_{km}\big( \overline{(W_k^*{\varphi})}\overline{\psi}P_\alpha C_\alpha (z^j)\big),f\rangle.
	\end{align*}
	Thus
	\begin{align*}
	\frac{1}{j!}  W_{km}^*U\widetilde{k}_{0,j}^\alpha&=W_{km}^*P_{\gamma}W_{km}\big(\overline{(W_k^*{\varphi})}\overline{\psi}P_\alpha C_\alpha (z^j)\big)\\
	&=W_{km}^*W_{km}P_{W_{km}^*\gamma}\big(\overline{(W_k^*{\varphi})}\overline{\psi}P_\alpha (\alpha \overline{z}^{j+1})\big).
	\end{align*}
	By Proposition \ref{thm_product_S_km}(b), $U= U_{\zeta}^{\alpha,\gamma}\in\mathcal{S}_{km}(\alpha,\gamma)$ with
	\begin{equation*}
	\zeta=\overline{\alpha}\sum_{j=0}^{km-1}W_{km}^*W_{km}P_{W_{km}^*\gamma}\big(\overline{(W_k^*{\varphi})}\overline{\psi}P_\alpha (\alpha \overline{z}^{j+1})\big)\cdot{z}^{j+1}.
	\end{equation*}
\end{proof}

\begin{corollary}
\label{cor_prod_Sk_T}
Let $\alpha$, $\beta$ and $\gamma$ be nonconstant inner functions and let $\varphi,\psi\in H^2$.
\begin{enumerate}[(a)]
  \item Assume that $A_{\varphi}^{\beta,\gamma}\in\mathcal{T}(\beta,\gamma)$ and $U_{\psi}^{\alpha,\beta}\in\mathcal{S}_k(\alpha,\beta)$. If $\gamma\leq\beta$ and $W_k^*\beta\leq\alpha$, then $U=A_{\varphi}^{\beta,\gamma}U_{\psi}^{\alpha,\beta}=U_{\eta}^{\alpha,\gamma}\in\mathcal{S}_{k}(\alpha,\gamma)$ with
  $\displaystyle\eta=\sum_{j=0}^{k-1}W_{k}^*W_{k}P_{W_{k}^*\gamma}\big((W_{k}^*\varphi)P_{W_{k}^*\beta}(\psi z^j)\big)\cdot \overline{z}^j$.
  \item Assume that $U_{\varphi}^{\beta,\gamma}\in\mathcal{S}_m(\beta,\gamma)$ and $A_{\psi}^{\alpha,\beta}\in\mathcal{T}(\alpha,\beta)$. If $W_m^*\gamma\leq\beta\leq\alpha$, then $U=U_{\varphi}^{\beta,\gamma}A_{\psi}^{\alpha,\beta}=U_{\eta}^{\alpha,\gamma}\in\mathcal{S}_{m}(\alpha,\gamma)$ with
 $\displaystyle\eta=\sum_{j=0}^{m-1}W_{m}^*W_{m}P_{W_{m}^*\gamma}\big(\varphi P_{\beta}(\psi z^j)\big)\cdot \overline{z}^j$.

\item Assume that $A_{\overline{\varphi}}^{\beta,\gamma}\in\mathcal{T}(\beta,\gamma)$ and $U_{\overline{\psi}}^{\alpha,\beta}\in\mathcal{S}_k(\alpha,\beta)$. If $\alpha\leq W_k^*\beta$ and $\beta\leq \gamma$, then $U=A_{\overline{\varphi}}^{\beta,\gamma}U_{\overline{\psi}}^{\alpha,\beta}=U_{\zeta}^{\alpha,\gamma}\in\mathcal{S}_{k}(\alpha,\gamma)$ with $\displaystyle\zeta=\overline{\alpha}\sum_{j=0}^{k-1}W_{k}^*W_{k}P_{W_{k}^*\gamma}\big(\overline{(W_{k}^*\varphi)}\overline{\psi}P_{\alpha}(\alpha\overline{z}^{j+1})\big)\cdot z^{j+1}$.
\item Assume that $U_{\overline{\varphi}}^{\beta,\gamma}\in\mathcal{S}_m(\beta,\gamma)$ and $A_{\overline{\psi}}^{\alpha,\beta}\in\mathcal{T}(\alpha,\beta)$. If $\alpha\leq \beta\leq W_m^*\gamma$, then $U=U_{\overline{\varphi}}^{\beta,\gamma}A_{\overline{\psi}}^{\alpha,\beta}=U_{\zeta}^{\alpha,\gamma}\in\mathcal{S}_{m}(\alpha,\gamma)$ with $\displaystyle\zeta=\overline{\alpha}\sum_{j=0}^{m-1}W_{m}^*W_{m}P_{W_{m}^*\gamma}\big(\overline{\varphi}\overline{\psi}P_{\alpha}(\alpha\overline{z}^{j+1})\big)\cdot z^{j+1}$.
%
%
\end{enumerate}
\end{corollary}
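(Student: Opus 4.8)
The plan is to derive Corollary \ref{cor_prod_Sk_T} as a direct specialization of Proposition \ref{thm_product_S_km} together with the preceding corollary, using the fact that an asymmetric truncated Toeplitz operator is exactly a $1^{st}$--order slant Toeplitz compression: $\mathcal{T}(\beta,\gamma)=\mathcal{S}_1(\beta,\gamma)$, $W_1=I$, and $W_1^*=I$. Thus in part (a) we take $m=1$ in the preceding corollary: the hypothesis $W_m^*\gamma\leq\beta$ becomes $\gamma\leq\beta$, the operator $U_\varphi^{\beta,\gamma}$ is $A_\varphi^{\beta,\gamma}$, $W_k^*W_m^*=W_{km}^*$ becomes $W_k^*$, and $W_m^*\varphi=\varphi$, so the formula for $\eta$ reads $\eta=\sum_{j=0}^{k-1}W_k^*W_kP_{W_k^*\gamma}\big((W_k^*\varphi)P_{W_k^*\beta}(\psi z^j)\big)\overline{z}^j$, exactly as claimed. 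Symmetrically, part (c) is the same specialization $m=1$ in part (b) of the preceding corollary.

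For parts (b) and (d) we instead set $k=1$ in the preceding corollary (so that $A_\psi^{\alpha,\beta}=U_\psi^{\alpha,\beta}\in\mathcal{S}_1(\alpha,\beta)$), giving $km=m$, $W_k=I$, $W_k^*\varphi=\varphi$, and $W_k^*\beta=\beta$. Then the hypotheses $W_m^*\gamma\leq\beta$ and $W_k^*\beta\leq\alpha$ collapse to $W_m^*\gamma\leq\beta\leq\alpha$, and $P_{W_k^*\beta}(\psi z^j)=P_\beta(\psi z^j)$, yielding $\eta=\sum_{j=0}^{m-1}W_m^*W_mP_{W_m^*\gamma}\big(\varphi P_\beta(\psi z^j)\big)\overline{z}^j$. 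The anti-analytic case (d) follows analogously from part (b) of the preceding corollary with $k=1$: the hypotheses $\alpha\leq W_k^*\beta$ and $\beta\leq W_m^*\gamma$ become $\alpha\leq\beta\leq W_m^*\gamma$, and $\overline{(W_k^*\varphi)}=\overline{\varphi}$, producing $\zeta=\overline{\alpha}\sum_{j=0}^{m-1}W_m^*W_mP_{W_m^*\gamma}\big(\overline{\varphi}\,\overline{\psi}P_\alpha(\alpha\overline{z}^{j+1})\big)z^{j+1}$. One small point to spell out is that in each case the membership of the relevant compression in its class is guaranteed by hypothesis, and the needed divisibility conditions of Proposition \ref{thm_product_S_km} (and hence of Corollary \ref{cor_U_inv_with_compr_shift}) are precisely the hypotheses stated, so both intertwining relations \eqref{eq_U_inv_shift} and \eqref{eq_U_inv_back_shift} remain available after the substitution.

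I expect no serious obstacle here: the only thing requiring care is the bookkeeping of which index ($k$ or $m$) is set to $1$ in each of the four parts and verifying that the $W_k^*$ or $W_m^*$ factors disappear correctly, together with noting $W_1=I$ and $W_1^*W_1=I$ so that $W_k^*W_k$ in parts (a),(c) and $W_m^*W_m$ in parts (b),(d) are genuine (nontrivial) projections. A clean way to write the proof is to state once that $\mathcal{T}=\mathcal{S}_1$ and then treat (a) and (b) as the $m=1$ and $k=1$ cases of part (a) of the previous corollary, and (c) and (d) as the $m=1$ and $k=1$ cases of part (b), leaving the reader to substitute.

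\begin{proof}
Recall that $\mathcal{T}(\cdot,\cdot)=\mathcal{S}_1(\cdot,\cdot)$, with $W_1=I$ and $W_1^*=I$, so that a truncated Toeplitz operator is a first--order slant Toeplitz compression.

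(a) Apply part (a) of the preceding corollary with $m=1$, so that $U_\varphi^{\beta,\gamma}=A_\varphi^{\beta,\gamma}\in\mathcal{S}_1(\beta,\gamma)=\mathcal{T}(\beta,\gamma)$. Then $W_m^*\gamma\leq\beta$ becomes $\gamma\leq\beta$, $km=k$, $W_k^*W_m^*=W_k^*$, and $W_m^*\varphi=\varphi$. The formula for $\eta$ therefore reduces to
$$\eta=\sum_{j=0}^{k-1}W_k^*W_kP_{W_k^*\gamma}\big((W_k^*\varphi)P_{W_k^*\beta}(\psi z^j)\big)\cdot\overline{z}^j,$$
and $U=A_\varphi^{\beta,\gamma}U_\psi^{\alpha,\beta}=U_\eta^{\alpha,\gamma}\in\mathcal{S}_k(\alpha,\gamma)$.

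(b) Apply part (a) of the preceding corollary with $k=1$, so that $A_\psi^{\alpha,\beta}=U_\psi^{\alpha,\beta}\in\mathcal{S}_1(\alpha,\beta)=\mathcal{T}(\alpha,\beta)$. Then $W_k^*\beta\leq\alpha$ becomes $\beta\leq\alpha$, the two divisibility hypotheses merge into $W_m^*\gamma\leq\beta\leq\alpha$, $km=m$, $W_k^*\varphi=\varphi$, and $P_{W_k^*\beta}(\psi z^j)=P_\beta(\psi z^j)$. Hence
$$\eta=\sum_{j=0}^{m-1}W_m^*W_mP_{W_m^*\gamma}\big(\varphi P_\beta(\psi z^j)\big)\cdot\overline{z}^j$$
and $U=U_\varphi^{\beta,\gamma}A_\psi^{\alpha,\beta}=U_\eta^{\alpha,\gamma}\in\mathcal{S}_m(\alpha,\gamma)$.

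(c) Apply part (b) of the preceding corollary with $m=1$. Then $\beta\leq W_m^*\gamma$ becomes $\beta\leq\gamma$, $km=k$, $W_k^*W_m^*=W_k^*$, and $A_{\overline{\varphi}}^{\beta,\gamma}=U_{\overline{\varphi}}^{\beta,\gamma}\in\mathcal{S}_1(\beta,\gamma)$, so
$$\zeta=\overline{\alpha}\sum_{j=0}^{k-1}W_k^*W_kP_{W_k^*\gamma}\big(\overline{(W_k^*\varphi)}\,\overline{\psi}\,P_\alpha(\alpha\overline{z}^{j+1})\big)\cdot z^{j+1}$$
and $U=A_{\overline{\varphi}}^{\beta,\gamma}U_{\overline{\psi}}^{\alpha,\beta}=U_\zeta^{\alpha,\gamma}\in\mathcal{S}_k(\alpha,\gamma)$.

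(d) Apply part (b) of the preceding corollary with $k=1$. Then $\alpha\leq W_k^*\beta$ becomes $\alpha\leq\beta$, the hypotheses merge into $\alpha\leq\beta\leq W_m^*\gamma$, $km=m$, $W_k^*\varphi=\varphi$, and $A_{\overline{\psi}}^{\alpha,\beta}=U_{\overline{\psi}}^{\alpha,\beta}\in\mathcal{S}_1(\alpha,\beta)$, so
$$\zeta=\overline{\alpha}\sum_{j=0}^{m-1}W_m^*W_mP_{W_m^*\gamma}\big(\overline{\varphi}\,\overline{\psi}\,P_\alpha(\alpha\overline{z}^{j+1})\big)\cdot z^{j+1}$$
and $U=U_{\overline{\varphi}}^{\beta,\gamma}A_{\overline{\psi}}^{\alpha,\beta}=U_\zeta^{\alpha,\gamma}\in\mathcal{S}_m(\alpha,\gamma)$.
\end{proof}
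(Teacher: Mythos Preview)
Your proposal is correct and is precisely the argument the paper has in mind: the corollary is stated without proof immediately after the preceding corollary, and the intended derivation is exactly the specialization $m=1$ (for parts (a), (c)) or $k=1$ (for parts (b), (d)) using $\mathcal{T}(\cdot,\cdot)=\mathcal{S}_1(\cdot,\cdot)$ and $W_1=I$. One cosmetic remark: in your plan for part (a) you write ``$W_m^*\varphi=\varphi$'', but the symbol formula in the preceding corollary involves $W_k^*\varphi$, not $W_m^*\varphi$, so nothing needs to simplify there when $m=1$; your formal proof handles this correctly.
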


	Note that under the assumptions from Proposition \ref{thm_product_S_km}(a) for $U=U_{\varphi}^{\beta,\gamma}U_{\psi}^{\alpha,\beta}$ we also have
	\begin{align*}
	U -S^*_{\gamma}US_{\alpha}^{km}&=U_{\varphi}^{\beta,\gamma}U_{\psi}^{\alpha,\beta} -S^*_{\gamma}U_{\varphi}^{\beta,\gamma}U_{\psi}^{\alpha,\beta}S_{\alpha}^{km}\\
	&=(I-S^*_{\gamma}S_{\gamma})U_{\varphi}^{\beta,\gamma}U_{\psi}^{\alpha,\beta}
	=(\widetilde{k}_0^\gamma\otimes \widetilde{k}_{0}^\gamma)U\\
	&=\widetilde{k}_0^\gamma\otimes U^*\widetilde{k}_{0}^\gamma,
	\end{align*}
which implies that $U=U_{\varphi}^{\beta,\gamma}U_{\psi}^{\alpha,\beta}=U_{\eta}^{\alpha,\gamma}\in\mathcal{S}_{km}(\alpha,\gamma)$ with
$$\eta=(W^*_{km}\gamma)\overline{z}^{km}\overline{U^*\widetilde{k}_{0}^\gamma}.$$
Now for $f\in K_\alpha^\infty$ we have
\begin{align*}
	\langle U^*\widetilde{k}_{0}^\gamma,f\rangle&=\langle (U_{\psi}^{\alpha,\beta})^*(U_{\varphi}^{\beta,\gamma})^*\widetilde{k}_{0}^\gamma,f\rangle=\langle (U_{\varphi}^{\beta,\gamma})^*\widetilde{k}_{0}^\gamma,U_{\psi}^{\alpha,\beta}f\rangle\\
	&=\langle P_{\beta}(\overline{\varphi}W_m^*\widetilde{k}_{0}^\gamma),P_{\beta}W_k(\psi f)\rangle=\langle W_k^*P_{\beta}(\overline{\varphi}W_m^*\widetilde{k}_{0}^\gamma),\psi f\rangle\\
	&=\langle \overline{\psi}P_{W_k^*\beta}W_k^*(\overline{\varphi}W_m^*\widetilde{k}_{0}^\gamma), f\rangle=\langle P_\alpha\big(\overline{\psi}P_{W_k^*\beta}(\overline{(W_k^*\varphi)}\cdot W_{km}^*\widetilde{k}_{0}^\gamma)\big), f\rangle\\
	&=\langle P_\alpha\big(\overline{\psi}P_{W_k^*\beta}(\overline{(W_k^*\varphi)}\cdot \overline{z}^{km}(W_{km}^*\gamma-\gamma(0)))\big), f\rangle\\
	&=\langle P_\alpha\big(\overline{\psi}P_{W_k^*\beta}(\overline{(W_k^*\varphi)}\cdot \overline{z}^{km}W_{km}^*\gamma)\big), f\rangle=\langle P\big(\overline{\psi}P_{W_k^*\beta}(\overline{(W_k^*\varphi)}\cdot \overline{z}^{km}W_{km}^*\gamma)\big), f\rangle.
	\end{align*}
The last equality follows from the fact that $\psi\in H^2$ and $W_k^*\beta\leq\alpha$. Thus
$$\eta=(W^*_{km}\gamma)\overline{z}^{km}\overline{P\big(\overline{\psi}P_{W_k^*\beta}(\overline{(W_k^*\varphi)}\cdot \overline{z}^{km}W_{km}^*\gamma))}.$$

	Analogously, under assumptions from part (b) of Proposition \ref{thm_product_S_km}, for $U=U_{\overline{\varphi}}^{\beta,\gamma}U_{\overline{\psi}}^{\alpha,\beta}$ we have
	\begin{align*}
	U -S_{\gamma}U(S^*_{\alpha})^{km}
	={k}_0^\gamma\otimes U^*{k}_{0}^\gamma
	\end{align*}
and so $U=U_{\zeta}^{\alpha,\gamma}\in\mathcal{S}_{km}(\alpha,\gamma)$ with
$$\zeta=\overline{U^*{k}_{0}^\gamma}.$$
Here for $f\in K_\alpha^\infty$ we have
\begin{align*}
	\langle U^*{k}_{0}^\gamma,f\rangle&=\langle (U_{\overline{\psi}}^{\alpha,\beta})^*(U_{\overline{\varphi}}^{\beta,\gamma})^*{k}_{0}^\gamma,f\rangle=\langle (U_{\overline{\varphi}}^{\beta,\gamma})^*{k}_{0}^\gamma,U_{\overline{\psi}}^{\alpha,\beta}f\rangle\\
	&=\langle P_{\beta}({\varphi}W_m^*{k}_{0}^\gamma),P_{\beta}W_k(\overline{\psi} f)\rangle=\langle {\varphi}W_m^*{k}_{0}^\gamma,W_kP_{W_k^*\beta}(\overline{\psi} f)\rangle\\
	&=\langle W_k^*({\varphi}W_m^*{k}_{0}^\gamma),P_{W_k^*\beta}(\overline{\psi} f)\rangle=\langle (W_k^*{\varphi})(W_{km}^*{k}_{0}^\gamma),P(\overline{\psi} f)\rangle\\
	&=\langle \psi(W_k^*{\varphi})(W_{km}^*{k}_{0}^\gamma), f\rangle=\langle P_{\alpha}\big( \psi(W_k^*{\varphi})(1-\overline{\gamma(0)}W_{km}^*\gamma)\big), f\rangle\\
	&=\langle P_{\alpha}\big( \psi(W_k^*{\varphi})\big), f\rangle
\end{align*}
This time we used the fact that $\alpha\leq W_k^*\beta$ (line 3) and $\alpha\leq W_k^*\beta\leq W_{km}^*\gamma$ (the last equality). Thus
$$\zeta=\overline{P_{\alpha}\big( \psi(W_k^*{\varphi})\big)}.$$

%

\begin{corollary}
	Let $\alpha$, $\beta$ and $\gamma$ be nonconstant inner functions and let $\varphi,\psi\in H^2$.
	\begin{enumerate}
		\item[(a)] Assume that $U_{\varphi}^{\beta,\gamma}\in\mathcal{S}_m(\beta,\gamma)$ and $U_{\psi}^{\alpha,\beta}\in\mathcal{S}_k(\alpha,\beta)$. If $W_m^*\gamma\leq\beta$ and $W_k^*\beta\leq\alpha$, then $U=U_{\varphi}^{\beta,\gamma}U_{\psi}^{\alpha,\beta}\in\mathcal{S}_{km}(\alpha,\gamma)$ and $U=U_{\eta}^{\alpha,\gamma}$ with
	$$\eta=(W^*_{km}\gamma)\overline{z}^{km}\overline{P\big(\overline{\psi}P_{W_k^*\beta}(\overline{(W_k^*\varphi)}\cdot \overline{z}^{km}W_{km}^*\gamma))}.$$
		\item[(b)] Assume that $U_{\overline{\varphi}}^{\beta,\gamma}\in\mathcal{S}_m(\beta,\gamma)$ and $U_{\overline{\psi}}^{\alpha,\beta}\in\mathcal{S}_k(\alpha,\beta)$. If $\alpha\leq W_k^*\beta$ and $\beta\leq W_m^*\gamma$, then $U=U_{\overline{\varphi}}^{\beta,\gamma}U_{\overline{\psi}}^{\alpha,\beta}\in\mathcal{S}_{km}(\alpha,\gamma)$ and $U=U_{\zeta}^{\alpha,\gamma}$ with
		$$\zeta=\overline{P_{\alpha}\big( \psi(W_k^*{\varphi})\big)}.$$
		
	\end{enumerate}
\end{corollary}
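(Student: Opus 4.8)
The plan is to observe that the corollary merely consolidates the two computations carried out in the paragraphs immediately preceding its statement; the proof therefore amounts to assembling those remarks into a self-contained argument.

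For part (a) I would begin with $U=U_{\varphi}^{\beta,\gamma}U_{\psi}^{\alpha,\beta}$ and invoke Corollary \ref{cor_U_inv_with_compr_shift}(a), whose hypotheses $W_k^{*}\beta\leq\alpha$ and $W_m^{*}\gamma\leq\beta$ are exactly those assumed here, to get the intertwinings $U_{\psi}^{\alpha,\beta}S_\alpha^{k}=S_\beta U_{\psi}^{\alpha,\beta}$ and $U_{\varphi}^{\beta,\gamma}S_\beta^{m}=S_\gamma U_{\varphi}^{\beta,\gamma}$. Iterating the first $m$ times and combining it with the second yields $US_\alpha^{km}=S_\gamma U$, whence $U-S_\gamma^{*}US_\alpha^{km}=(I_{K_\gamma}-S_\gamma^{*}S_\gamma)U$. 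By \eqref{eq_def_con_op} this equals $(\widetilde k_0^\gamma\otimes\widetilde k_0^\gamma)U=\widetilde k_0^\gamma\otimes U^{*}\widetilde k_0^\gamma$, i.e.\ relation \eqref{warr} for the pair $\alpha,\gamma$ with exponent $km$, $\chi=U^{*}\widetilde k_0^\gamma$ and $\psi_0=\dots=\psi_{km-1}=0$. Thus $U\in\mathcal S_{km}(\alpha,\gamma)$, and \eqref{2ox} gives $U=U_\eta^{\alpha,\gamma}$ with $\eta=(W_{km}^{*}\gamma)\,\overline z^{km}\,\overline{U^{*}\widetilde k_0^\gamma}$. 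It remains to compute $U^{*}\widetilde k_0^\gamma$: writing $U^{*}=(U_{\psi}^{\alpha,\beta})^{*}(U_{\varphi}^{\beta,\gamma})^{*}$ and pairing against an arbitrary $f\in K_\alpha^\infty$, one peels the two adjoints off one at a time using the defining relation $U_{\psi}^{\alpha,\beta}g=P_\beta W_k(\psi g)$ (and its analogue for $U_\varphi^{\beta,\gamma}$), the commutation identity $W_k^{*}P_\beta=P_{W_k^{*}\beta}W_k^{*}$, and $W_k^{*}W_m^{*}=W_{km}^{*}$; the hypothesis $W_k^{*}\beta\leq\alpha$ together with $\psi\in H^2$ allows one to replace $P_\alpha$ by the Szeg\H{o} projection $P$, and the same observation annihilates the constant-term contribution coming from $W_{km}^{*}\widetilde k_0^\gamma=\overline z^{km}(W_{km}^{*}\gamma-\gamma(0))$. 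This produces $U^{*}\widetilde k_0^\gamma=P\big(\overline{\psi}\,P_{W_k^{*}\beta}(\overline{(W_k^{*}\varphi)}\,\overline z^{km}W_{km}^{*}\gamma)\big)$ and hence the stated formula for $\eta$.

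For part (b) the argument is parallel with the roles of $S$ and $S^{*}$ interchanged. From Corollary \ref{cor_U_inv_with_compr_shift}(b), whose hypotheses $\alpha\leq W_k^{*}\beta$ and $\beta\leq W_m^{*}\gamma$ are the present ones, one obtains $U_{\overline\psi}^{\alpha,\beta}(S_\alpha^{*})^{k}=S_\beta^{*}U_{\overline\psi}^{\alpha,\beta}$ and $U_{\overline\varphi}^{\beta,\gamma}(S_\beta^{*})^{m}=S_\gamma^{*}U_{\overline\varphi}^{\beta,\gamma}$; iterating and combining gives $U(S_\alpha^{*})^{km}=S_\gamma^{*}U$, so $U-S_\gamma U(S_\alpha^{*})^{km}=(I_{K_\gamma}-S_\gamma S_\gamma^{*})U=(k_0^\gamma\otimes k_0^\gamma)U=k_0^\gamma\otimes U^{*}k_0^\gamma$ by \eqref{eq_def_op}. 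This is relation \eqref{warr2} with $\chi=U^{*}k_0^\gamma$ and all $\psi_j=0$, so \eqref{ox} gives $U=U_\zeta^{\alpha,\gamma}$ with $\zeta=\overline{U^{*}k_0^\gamma}$. Unwinding $U^{*}k_0^\gamma$ as above, now using $\alpha\leq W_k^{*}\beta$ to turn $P_{W_k^{*}\beta}$ into $P$ on the relevant vectors and $\alpha\leq W_k^{*}\beta\leq W_{km}^{*}\gamma$ (valid because $W_k^{*}$ respects divisibility of inner functions) to discard the $\gamma(0)$ term under $P_\alpha$, one gets $U^{*}k_0^\gamma=P_\alpha\big(\psi\,(W_k^{*}\varphi)\big)$, whence the claimed $\zeta$.

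The only step requiring real care is the explicit evaluation of $U^{*}\widetilde k_0^\gamma$ in (a) and of $U^{*}k_0^\gamma$ in (b): one must apply the two adjoints in the correct order, push each $W$ past the appropriate projection, and invoke the divisibility hypotheses at precisely the right moments in order to convert $P_{W_k^{*}\beta}$ and $P_\alpha$ into $P$ (resp.\ to keep $P_\alpha$ but kill the $\gamma(0)$ term) and to legitimately drop the constant term of $W_{km}^{*}\gamma$. Everything else is a direct appeal to Corollary \ref{cor_U_inv_with_compr_shift}, to the identities \eqref{eq_def_op} and \eqref{eq_def_con_op}, and to the representation formulas \eqref{warr}, \eqref{2ox}, \eqref{warr2}, \eqref{ox} from \cite{BM2}; since both required computations are already written out in full immediately above the statement, no genuinely new difficulty arises.
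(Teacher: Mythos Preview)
Your proposal is correct and follows the paper's own argument essentially verbatim: the discussion immediately preceding the corollary already shows $U-S_\gamma^{*}US_\alpha^{km}=\widetilde{k}_0^\gamma\otimes U^{*}\widetilde{k}_0^\gamma$ (resp.\ $U-S_\gamma U(S_\alpha^{*})^{km}=k_0^\gamma\otimes U^{*}k_0^\gamma$) via Corollary~\ref{cor_U_inv_with_compr_shift} and \eqref{eq_def_con_op}/\eqref{eq_def_op}, reads off the symbol from \eqref{2ox}/\eqref{ox}, and evaluates $U^{*}\widetilde{k}_0^\gamma$ (resp.\ $U^{*}k_0^\gamma$) by exactly the chain of identities you describe. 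There is nothing to add.
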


\begin{corollary}
	Let $\alpha$, $\beta$ and $\gamma$ be nonconstant inner functions and let $\varphi,\psi\in H^2$.
	\begin{enumerate}
		\item[(a)] Assume that $A_{\varphi}^{\beta,\gamma}\in\mathcal{T}(\beta,\gamma)$ and $U_{\psi}^{\alpha,\beta}\in\mathcal{S}_k(\alpha,\beta)$. If $\gamma\leq\beta$ and $W_k^*\beta\leq\alpha$, then $U=A_{\varphi}^{\beta,\gamma}U_{\psi}^{\alpha,\beta}=U_{\eta}^{\alpha,\gamma}\in\mathcal{S}_{k}(\alpha,\gamma)$ with $\eta=(W^*_{k}\gamma)\overline{z}^{k}\overline{P\big(\overline{\psi}P_{W_k^*\beta}(\overline{(W_k^*\varphi)}\cdot \overline{z}^{k}W_{k}^*\gamma))}$.
		\item[(b)] Assume that $U_{\varphi}^{\beta,\gamma}\in\mathcal{S}_m(\beta,\gamma)$ and $A_{\psi}^{\alpha,\beta}\in\mathcal{T}(\alpha,\beta)$. If $W_m^*\gamma\leq\beta\leq\alpha$, then $U=U_{\varphi}^{\beta,\gamma}A_{\psi}^{\alpha,\beta}=U_{\eta}^{\alpha,\gamma}\in\mathcal{S}_{m}(\alpha,\gamma)$ with
		$\eta=(W^*_{m}\gamma)\overline{z}^{m}\overline{P\big(\overline{\psi}P_{\beta}(\overline{\varphi}\cdot \overline{z}^{m}W_{m}^*\gamma))}$.
		
		\item[(c)] Assume that $A_{\overline{\varphi}}^{\beta,\gamma}\in\mathcal{T}(\beta,\gamma)$ and $U_{\overline{\psi}}^{\alpha,\beta}\in\mathcal{S}_k(\alpha,\beta)$. If $\alpha\leq W_k^*\beta$ and $\beta\leq \gamma$, then $U=A_{\overline{\varphi}}^{\beta,\gamma}U_{\overline{\psi}}^{\alpha,\beta}=U_{\zeta}^{\alpha,\gamma}\in\mathcal{S}_{k}(\alpha,\gamma)$ with
		$\zeta=\overline{P_{\alpha}\big( \psi(W_k^*{\varphi})\big)}$.
		\item[(d)] Assume that $U_{\overline{\varphi}}^{\beta,\gamma}\in\mathcal{S}_m(\beta,\gamma)$ and $A_{\overline{\psi}}^{\alpha,\beta}\in\mathcal{T}(\alpha,\beta)$. If $\alpha\leq\beta\leq W_m^*\gamma$, then $U=U_{\overline{\varphi}}^{\beta,\gamma}U_{\overline{\psi}}^{\alpha,\beta}=U_{\zeta}^{\alpha,\gamma}\in\mathcal{S}_{m}(\alpha,\gamma)$ with
		$\zeta=\overline{P_{\alpha}\big( \psi{\varphi}\big)}$.
		
	\end{enumerate}
\end{corollary}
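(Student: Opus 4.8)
The plan is to obtain this corollary as a direct specialization of the preceding corollary, combined with two elementary observations: first, $\mathcal{T}(\sigma,\tau)=\mathcal{S}_{1}(\sigma,\tau)$ and $A_{\varphi}^{\sigma,\tau}=U_{\varphi}^{\sigma,\tau}$ whenever the latter is viewed as a first-order compression, since $W_{1}$ is the identity operator on $L^{2}$; second, consequently $W_{1}^{*}=I$, so that $W_{1}^{*}\rho=\rho$ for every $\rho\in L^{2}$, $W_{1}^{*}\sigma=\sigma$ for every inner $\sigma$, and $P_{W_{1}^{*}\sigma}=P_{\sigma}$.

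For part (a), since $A_{\varphi}^{\beta,\gamma}=U_{\varphi}^{\beta,\gamma}\in\mathcal{S}_{1}(\beta,\gamma)$, I would apply part (a) of the preceding corollary with $m$ replaced by $1$ and $k$ unchanged. The hypothesis ``$W_{m}^{*}\gamma\leq\beta$'' then reads ``$\gamma\leq\beta$'', which is exactly what is assumed, while ``$W_{k}^{*}\beta\leq\alpha$'' is kept verbatim; the product index $km$ becomes $k$, so $U\in\mathcal{S}_{k}(\alpha,\gamma)$, and the formula for $\eta$ reduces to the stated one because $W_{km}^{*}=W_{k}^{*}$. Part (b) is the mirror image: here $A_{\psi}^{\alpha,\beta}=U_{\psi}^{\alpha,\beta}\in\mathcal{S}_{1}(\alpha,\beta)$, so one applies part (a) of the preceding corollary with $k$ replaced by $1$ and $m$ unchanged; then ``$W_{k}^{*}\beta\leq\alpha$'' becomes ``$\beta\leq\alpha$'', matching the chain ``$W_{m}^{*}\gamma\leq\beta\leq\alpha$'', and the identities $W_{1}^{*}\varphi=\varphi$, $P_{W_{1}^{*}\beta}=P_{\beta}$, $W_{1\cdot m}^{*}=W_{m}^{*}$ turn the general formula into $\eta=(W_{m}^{*}\gamma)\overline{z}^{m}\,\overline{P(\overline{\psi}P_{\beta}(\overline{\varphi}\cdot\overline{z}^{m}W_{m}^{*}\gamma))}$.

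Parts (c) and (d) are handled identically, now starting from part (b) of the preceding corollary. For (c) one takes $m=1$ (since $A_{\overline{\varphi}}^{\beta,\gamma}$ is of first order), so ``$\beta\leq W_{m}^{*}\gamma$'' becomes ``$\beta\leq\gamma$'' while ``$\alpha\leq W_{k}^{*}\beta$'' is unchanged, and $\zeta=\overline{P_{\alpha}(\psi(W_{k}^{*}\varphi))}$ is literally the stated expression. For (d) one takes $k=1$ (since $A_{\overline{\psi}}^{\alpha,\beta}$ is of first order), so ``$\alpha\leq W_{k}^{*}\beta$'' becomes ``$\alpha\leq\beta$'' while ``$\beta\leq W_{m}^{*}\gamma$'' is unchanged, and $W_{1}^{*}\varphi=\varphi$ collapses $\zeta=\overline{P_{\alpha}(\psi(W_{k}^{*}\varphi))}$ to $\overline{P_{\alpha}(\psi\varphi)}$.

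Since no new computation is involved --- only the routine verification that the $W_{1}^{*}$-twisted divisibility conditions degenerate to the untwisted ones --- I do not expect any real obstacle. The only point that calls for a moment's attention is bookkeeping: in each of the four cases one must correctly identify whether the truncated Toeplitz factor plays the role of ``$U_{\varphi}^{\beta,\gamma}$'' (the outer factor, carrying the index $m$) or of ``$U_{\psi}^{\alpha,\beta}$'' (the inner factor, carrying the index $k$) in the preceding corollary, and match the symbols $\varphi,\psi$ accordingly.
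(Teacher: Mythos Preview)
Your proposal is correct and matches exactly the paper's intended derivation: the corollary in question is stated immediately after the general-$(k,m)$ corollary with no separate proof, so it is meant to be read as the specialization obtained by setting $m=1$ (parts (a) and (c)) or $k=1$ (parts (b) and (d)), using $W_1=W_1^*=I$ and $\mathcal{T}(\sigma,\tau)=\mathcal{S}_1(\sigma,\tau)$. Your bookkeeping of which factor carries the index being collapsed is accurate in all four cases.
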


Finally, we can consider the case when $k=m=1$ (see \cite[Proposition 1]{Y}).
\begin{corollary}
	Let $\alpha$, $\beta$ and $\gamma$ be nonconstant inner functions and let $\varphi,\psi\in H^2$.
	\begin{enumerate}[(a)]
		\item Assume that $A_{\varphi}^{\beta,\gamma}\in\mathcal{T}(\beta,\gamma)$ and $A_{\psi}^{\alpha,\beta}\in\mathcal{T}(\alpha,\beta)$. If $\gamma\leq\beta\leq\alpha$, then $A=A_{\varphi}^{\beta,\gamma}A_{\psi}^{\alpha,\beta}=A_{\eta}^{\alpha,\gamma}\in\mathcal{T}(\alpha,\gamma)$ with
		$\displaystyle\eta=P_{\gamma}\big(\varphi P_{\beta}(\psi)\big)$.
		
		\item Assume that $A_{\varphi}^{\beta,\gamma}\in\mathcal{T}(\beta,\gamma)$ and $A_{\psi}^{\alpha,\beta}\in\mathcal{T}(\alpha,\beta)$. If $\gamma\leq\beta\leq\alpha$, then $A=A_{\varphi}^{\beta,\gamma}A_{\psi}^{\alpha,\beta}=A_{\eta}^{\alpha,\gamma}\in\mathcal{T}(\alpha,\gamma)$ with
		$\eta=\gamma\overline{z}\overline{P\big(\overline{\psi}P_{\beta}(\gamma\overline{z}\overline{\varphi})\big)}$.

		\item Assume that $A_{\overline{\varphi}}^{\beta,\gamma}\in\mathcal{T}(\beta,\gamma)$ and $A_{\overline{\psi}}^{\alpha,\beta}\in\mathcal{T}(\alpha,\beta)$. If $\alpha\leq \beta\leq \gamma$, then $A=A_{\overline{\varphi}}^{\beta,\gamma}A_{\overline{\psi}}^{\alpha,\beta}=A_{\zeta}^{\alpha,\gamma}\in\mathcal{T}(\alpha,\gamma)$ with $\displaystyle\zeta=\overline{\alpha}zP_{\gamma}\big(\overline{\varphi}\overline{\psi}P_{\alpha}(\alpha\overline{z})\big)$.

		\item Assume that $A_{\overline{\varphi}}^{\beta,\gamma}\in\mathcal{T}(\beta,\gamma)$ and $A_{\overline{\psi}}^{\alpha,\beta}\in\mathcal{T}(\alpha,\beta)$. If $\alpha\leq \beta\leq \gamma$, then $A=A_{\overline{\varphi}}^{\beta,\gamma}A_{\overline{\psi}}^{\alpha,\beta}=A_{\zeta}^{\alpha,\gamma}\in\mathcal{T}(\alpha,\gamma)$ with
		$\zeta=\overline{P_{\alpha}\big( \psi{\varphi}\big)}$.
		
	\end{enumerate}
\end{corollary}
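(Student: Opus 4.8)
The plan is to specialize the results of this section to the case $k=m=1$. For $k=1$ the operator $W_1$ is the identity on $L^2$, so that $W_1^\ast=I$, $W_1^\ast W_1=I$, $W_1^\ast\gamma=\gamma$ for every inner function $\gamma$, $\mathcal{S}_1(\alpha,\beta)=\mathcal{T}(\alpha,\beta)$, $U_\varphi^{\alpha,\beta}=A_\varphi^{\alpha,\beta}$ and $S_\alpha^1=S_\alpha$. Under these identifications every one-sided divisibility hypothesis of the form $W_k^\ast\beta\leq\alpha$ becomes $\beta\leq\alpha$, so that the chains of assumptions in the earlier corollaries reduce to $\gamma\leq\beta\leq\alpha$ in the ``analytic'' cases and to $\alpha\leq\beta\leq\gamma$ in the ``anti-analytic'' cases; moreover every sum $\sum_{j=0}^{k-1}$ collapses to its single term at $j=0$, where $z^0=1$ and $\overline{z}^{\,k}=\overline{z}$.

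Parts (a) and (c) will then follow directly from Corollary~\ref{cor_prod_Sk_T}. Taking $k=1$ in Corollary~\ref{cor_prod_Sk_T}(a) (equivalently, $m=1$ in Corollary~\ref{cor_prod_Sk_T}(b)) leaves only $\eta=W_1^\ast W_1\,P_{W_1^\ast\gamma}\bigl((W_1^\ast\varphi)P_{W_1^\ast\beta}(\psi)\bigr)=P_\gamma\bigl(\varphi P_\beta(\psi)\bigr)$, which is part (a); and taking $k=1$ in Corollary~\ref{cor_prod_Sk_T}(c) (equivalently, $m=1$ in Corollary~\ref{cor_prod_Sk_T}(d)) leaves only $\zeta=\overline{\alpha}\,z\,P_\gamma\bigl(\overline{\varphi}\,\overline{\psi}\,P_\alpha(\alpha\overline{z})\bigr)$, which is part (c).

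Parts (b) and (d) will follow in exactly the same way from the corollary immediately preceding the present one. Its part (a) with $k=1$ (or its part (b) with $m=1$) gives $\eta=(W_1^\ast\gamma)\overline{z}\,\overline{P\bigl(\overline{\psi}\,P_\beta(\overline{\varphi}\,\overline{z}\,W_1^\ast\gamma)\bigr)}=\gamma\overline{z}\,\overline{P\bigl(\overline{\psi}\,P_\beta(\gamma\overline{z}\,\overline{\varphi})\bigr)}$, which is part (b), while its part (c) with $k=1$ (or its part (d) with $m=1$) gives $\zeta=\overline{P_\alpha\bigl(\psi(W_1^\ast\varphi)\bigr)}=\overline{P_\alpha(\psi\varphi)}$, which is part (d). Since in (a) and (b) the operator $A$ is the same product $A_\varphi^{\beta,\gamma}A_\psi^{\alpha,\beta}$, and likewise in (c) and (d), these four statements merely record two symbol representations of each of the two product operators. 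I do not expect any genuine obstacle here; the only step requiring a little care is the bookkeeping that matches each of the several one-sided divisibility conditions of the general corollaries with the stated two-term chains once $W_1=I$ is used.
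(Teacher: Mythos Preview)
Your proposal is correct and follows exactly the route the paper intends: the corollary is stated as the specialization $k=m=1$ of the two preceding corollaries (Corollary~\ref{cor_prod_Sk_T} for parts (a), (c) and the immediately preceding corollary for parts (b), (d)), and your reductions of the sums, the operators $W_1=I$, and the divisibility chains are all accurate. The paper offers no independent argument beyond this specialization, so there is nothing to add.
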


\section{Products of operators from $\mathcal{S}_k(\alpha,\beta)$ with $L^2$ symbols}

Let $\alpha$, $\beta$ be two nonconstant inner functions and fix $k\in\mathbb{N}$ such that $k\leq \dim K_\alpha$. Recall that in that case $U_{\varphi}^{\alpha,\beta}=0$ if and only if $\varphi\in\overline{\alpha H^2}+\overline{z}^{k-1}(W_k^*\beta)H^2$ (see \cite{BM2}). It follows that every operator from $\mathcal{S}_k(\alpha,\beta)$ has a symbol of the form $\displaystyle\overline{\varphi}_-+\varphi_+$, where $\varphi_-\in K_\alpha$ and $\varphi_+=\sum\limits_{j=1}^{k}z^j(W_k^*\varphi_j)\in z K_{W_k^*\beta}$ for some $\varphi_j\in K_\beta$, $1\leq j\leq k$. To see this note that
  $$L^2=\overline{H^2}\oplus zH^2 =\overline{\alpha H^2}\oplus \overline{K_{\alpha}}\oplus z(W_k^*\beta) H^2\oplus zK_{W_k^*\beta}. $$
  Decomposing an arbitrary symbol $\varphi\in L^2$ accordingly as
  $$\varphi=\overline{\alpha h}+\overline{\varphi}_-+(W_k^*\beta)zg+z\chi\quad (h,g\in H^2, \varphi_-\in K_\alpha,\chi\in K_{W_k^*\beta})$$
  we get
  $$U_{\varphi}^{\alpha,\beta}=U_{\overline{\varphi}_-+z\chi}^{\alpha,\beta}$$
  since
  $$z(W_k^*\beta)H^2=\overline{z}^{k-1}(W_k^*\beta)z^kH^2\subset \overline{z}^{k-1}(W_k^*\beta)H^2.$$

Now by the decomposition
$$K_{W_k^*\beta}=W_k^*(K_\beta)\oplus zW_k^*(K_\beta)\oplus\ldots \oplus z^{k-1}W_k^*(K_\beta)$$
(see formula (4.3) from \cite{BM2}), there are functions $\varphi_1,\ldots,\varphi_k\in K_\beta$ such that $\displaystyle\chi=\sum_{j=1}^kz^{j-1}(W_k^*\varphi_j)$ and so
\begin{equation}\label{iks}
\overline{\varphi}_-+{\varphi}_+=\overline{\varphi}_-+z\chi=\overline{\varphi}_-+\sum_{j=1}^kz^{j}(W_k^*\varphi_j).
\end{equation}
Observe that this decomposition is orthogonal.
Moreover, it can be shown that if $U\in \mathcal{S}_k(\alpha,\beta)$ has a symbol given by \eqref{iks}, then
\begin{equation*}
\begin{split}
 U-S_\beta U(S^*_{\alpha})^k
  &=k_0^\beta\otimes \varphi_-+\sum_{j=0}^{k-1} \tfrac{1}{j!}(S_\beta P_\beta W_k(\bar{z}^{k-j}\varphi_+))\otimes k_{0,j}^\alpha\\
  &=k_0^\beta\otimes \varphi_-+\sum_{j=0}^{k-1} \tfrac{1}{j!}(S_\beta \varphi_{k-j})\otimes k_{0,j}^\alpha.
\end{split}
\end{equation*}
See the proof of Theorem 2 in \cite{BM2} for more detailed computations.

\begin{theorem}
\label{thm_prod_L2-symb_Sk_UU}
Let $\alpha$, $\beta$ and $\gamma$ be nonconstant inner functions and let $k$, $m$ be two fixed positive integers such that $k\leq\dim K_\alpha$ and $m\leq \dim K_\beta$.
Assume that $A=U_{\overline{\varphi}_-+\varphi_+}^{\beta,\gamma}\in\mathcal{S}_m(\beta,\gamma)$ for some $\varphi_-\in K_\beta$, $\varphi_+\in zK_{W^*_m\gamma}$  and $B=U_{\overline{\psi}_-+\psi_+}^{\alpha,\beta}\in\mathcal{S}_k(\alpha,\beta)$ for some $\psi_-\in K_\alpha$, $\psi_+\in zK_{W^*_k\beta}$. Then $AB\in\mathcal{S}_{km}(\alpha,\gamma)$
if and only if there exist
$\tau_p\in K_\gamma$, $p=0,1,\ldots,mk-1$, and $\upsilon\in K_\alpha$ such that

\begin{align*}
	\sum_{n=0}^{m-1}\tfrac1{n!}(Ak_{0,n}^\beta)&\otimes(S_\alpha^{kn}\psi_-)-\sum_{j=0}^{m-1}\left(\tfrac{1}{j!}\right)^2\!\!\cdot( S_\gamma A\widetilde{k}_{0,j}^\beta)\otimes (S_\alpha^{km}B^*\widetilde{k}_{0,j}^\beta)\\&+\sum_{j=0}^{m-1} \tfrac{1}{j!}(S_\gamma P_\gamma W_m(\bar{z}^{m-j}\varphi_+))\otimes \left(B^*k_{0,j}^\beta-\sum_{n=0}^{m-1}  {\langle S_\beta^{n}k_0^\beta,k_{0,j}^\beta\rangle}S_\alpha^{kn}\psi_-\right)
	\\
	= k_0^\gamma&\otimes \upsilon+\sum_{p=0}^{km-1} \tau_p\otimes k_{0,p}^\alpha.
\end{align*}
In that case $AB=U_{\xi}^{\alpha,\gamma}$ with
\begin{align*}
   \xi=\overline{\upsilon}&+\overline{B^*\psi_-}-\sum_{n=0}^{m-1}{\langle S_\beta^{n}k_0^\beta,\varphi_-\rangle} \overline{S_\alpha^{kn}\psi_-}-\sum_{n=0}^{m-1}\sum_{l=0}^{k-1} \frac{1}{l!}{\langle S_\beta^{n+1} P_\beta W_k(\overline{z}^{k-l}\psi_+),\varphi_-\rangle} S_\alpha^{kn}\overline{k_{0,l}^\alpha}\\
   &+\sum_{p=0}^{km-1} (W_{km}^*\tau_{p})p!\overline{z}^p+\sum_{p=0}^{km-1}\sum_{n=0}^{m-1}(W_{km}^*AS_\beta^{n+1} P_\beta W_k(\overline{z}^{k(n+1)-p}\psi_+))\overline{z}^p\\&-\sum_{p=0}^{km-1}\sum_{n=0}^{m-1}\left(\sum_{j=0}^{m-1} \tfrac{1}{j!}\langle S_\beta^{n+1} P_\beta W_k(\overline{z}^{k(n+1)-p}\psi_+),k_{0,j}^\beta\rangle S_\gamma P_\gamma W_m(\bar{z}^{m-j}\varphi_+)\right)\overline{z}^p.
\end{align*}
\end{theorem}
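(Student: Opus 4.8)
The strategy is to use the characterization \eqref{warr2} of operators in $\mathcal{S}_{km}(\alpha,\gamma)$: a bounded operator $U:K_\alpha\to K_\gamma$ belongs to $\mathcal{S}_{km}(\alpha,\gamma)$ if and only if $U-S_\gamma U(S_\alpha^*)^{km}$ has the form $k_0^\gamma\otimes\upsilon+\sum_{p=0}^{km-1}\tau_p\otimes k_{0,p}^\alpha$ for some $\upsilon\in K_\alpha$ and $\tau_p\in K_\gamma$. So the first step is to compute $AB-S_\gamma AB(S_\alpha^*)^{km}$ explicitly, where $A=U_{\overline{\varphi}_-+\varphi_+}^{\beta,\gamma}$ and $B=U_{\overline{\psi}_-+\psi_+}^{\alpha,\beta}$. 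The natural way to do this is to insert the telescoping identity
$$AB-S_\gamma AB(S_\alpha^*)^{km}=\big(A-S_\gamma A(S_\beta^*)^m\big)B+S_\gamma A\big((S_\beta^*)^mB-B(S_\alpha^*)^{km}\big)+S_\gamma A B\big(I_{K_\alpha}-(S_\alpha^*)^{km}(S_\alpha)^{km}\big)\cdot(\text{adjust}),$$
or more cleanly: write $AB-S_\gamma AB(S_\alpha^*)^{km}=(A-S_\gamma A(S_\beta^*)^m)B + S_\gamma A\big((S_\beta^*)^m B - B(S_\alpha^*)^{km}\big)$. For the first term I substitute the formula for $A-S_\beta A(S_\beta^*)^m$ recorded in the displayed equation just before the theorem (the one with $k_0^\beta\otimes\varphi_-+\sum_j\tfrac1{j!}(S_\beta P_\beta W_m(\bar z^{m-j}\varphi_+))\otimes k_{0,j}^\beta$), but I need it in the ``$(S_\beta^*)^m$'' form rather than the ``$S_\beta(\cdot)(S_\beta^*)^m$'' form; passing between them costs a correction term of the shape $(I-S_\beta^m(S_\beta^*)^m)$-type, handled via Lemma \ref{lem_sar_k}. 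For the second term I use Proposition \ref{prop_U_inv_with_shift_and_backshift}(b) applied to $B$ (with $m$ copies, i.e.\ iterated), which tells me $(S_\beta^*)B-B(S_\alpha^*)^k$ as a finite-rank correction, then iterate $m$ times, again using Lemma \ref{lem_shift_pow_m} and Corollary \ref{cor_shift_pow_m} to express powers of the shifts acting on kernels.

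Once all terms are collected, each summand is a rank-one operator of the form $(\text{vector in }K_\gamma)\otimes(\text{vector in }K_\alpha)$, with the second factors being among $S_\alpha^{km}B^*\widetilde{k}_{0,j}^\beta$, $S_\alpha^{kn}\psi_-$, the vectors $B^*k_{0,j}^\beta-\sum_n\langle S_\beta^n k_0^\beta,k_{0,j}^\beta\rangle S_\alpha^{kn}\psi_-$, etc. The point is that $AB\in\mathcal{S}_{km}(\alpha,\gamma)$ precisely when this whole finite-rank operator can be rewritten in the canonical form $k_0^\gamma\otimes\upsilon+\sum_p\tau_p\otimes k_{0,p}^\alpha$ — which is exactly the displayed equation in the theorem. (Here one uses that since $k\leq\dim K_\alpha$, $\mathcal{S}_{km}(\alpha,\gamma)$ is a genuine constraint, and that the canonical form has a second slot spanned by $\{k_0^\gamma\}\oplus\{k_{0,p}^\alpha\}$.) Then, granting that the condition holds, I read off the symbol $\xi$ from formula \eqref{ox}, which converts a representation $U-S_\gamma U(S_\alpha^*)^{km}=k_0^\gamma\otimes\chi+\sum_p\psi_p\otimes k_{0,p}^\alpha$ into $\xi=\overline{\chi}+\sum_p(W_{km}^*\psi_p)p!\,\overline{z}^p$; substituting the explicit $\chi=\upsilon+B^*\psi_- - (\text{corrections})$ and $\psi_p=\tau_p+(\text{corrections from the }B\text{-part})$ yields the stated $\xi$.

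The main obstacle will be the bookkeeping in the second term $S_\gamma A\big((S_\beta^*)^m B-B(S_\alpha^*)^{km}\big)$: iterating Proposition \ref{prop_U_inv_with_shift_and_backshift}(b) $m$ times produces nested sums over $n$ (the iteration index), $l$ (the index in the $\psi_+$-expansion from $B$'s defining data), and $j$ (from re-expanding $S_\beta$-powers of kernels), and one must carefully track which terms land in $\operatorname{span}\{k_0^\gamma\}$, which land in $\operatorname{span}\{k_{0,p}^\alpha\}$ after applying $S_\gamma A$ and $S_\alpha^{km}$, and which are the genuinely new rank-one pieces appearing on the left-hand side of the stated criterion. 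In particular the appearance of $S_\beta^{n+1}P_\beta W_k(\overline{z}^{k(n+1)-p}\psi_+)$ and the coefficients $\langle S_\beta^n k_0^\beta,k_{0,j}^\beta\rangle$ comes from this iteration, and keeping the index shifts consistent (the $k(n+1)-p$ exponent, the range $0\le p\le km-1$) is the delicate part. The rest — commuting $W_{km}^*=W_k^*W_m^*$ past projections using \cite[Lemma 2.1]{BM2}, and invoking $\eqref{ox}$ — is routine once the finite-rank decomposition is in hand.
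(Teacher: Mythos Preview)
Your overall strategy is correct and is the same as the paper's: compute $AB-S_\gamma AB(S_\alpha^*)^{km}$ explicitly as a finite-rank operator and invoke the characterization \eqref{warr2}, then read off the symbol via \eqref{ox}. The difference lies in how the middle factor $K_\beta$ is threaded through.

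The paper does \emph{not} use your telescoping
\[
AB-S_\gamma AB(S_\alpha^*)^{km}=\bigl(A-S_\gamma A(S_\beta^*)^m\bigr)B+S_\gamma A\bigl((S_\beta^*)^mB-B(S_\alpha^*)^{km}\bigr)
\]
together with an iteration of Proposition~\ref{prop_U_inv_with_shift_and_backshift}(b). Instead it inserts $I_{K_\beta}=(S_\beta^*)^mS_\beta^m+\bigl(I-(S_\beta^*)^mS_\beta^m\bigr)$ directly into $S_\gamma AB(S_\alpha^*)^{km}$ and applies Lemma~\ref{lem_sar_k}; this is precisely what produces the clean defect term $\sum_j(\tfrac{1}{j!})^2(S_\gamma A\widetilde{k}_{0,j}^\beta)\otimes(S_\alpha^{km}B^*\widetilde{k}_{0,j}^\beta)$ that appears in the statement. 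For the remaining piece $S_\gamma A(S_\beta^*)^mS_\beta^mB(S_\alpha^*)^{km}$ the paper substitutes the characterization \eqref{A} for $A$ and the \emph{iterated forward} relation \eqref{C}, obtained by telescoping \eqref{B} (that is, $B-S_\beta B(S_\alpha^*)^k$, not Proposition~\ref{prop_U_inv_with_shift_and_backshift}(b)); this is where the $S_\beta^{n+1}P_\beta W_k(\overline z^{k-l}\psi_+)$ and the coefficients $\langle S_\beta^n k_0^\beta,k_{0,j}^\beta\rangle$ come from.

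Your route via iterating Proposition~\ref{prop_U_inv_with_shift_and_backshift}(b) on $B$ is algebraically legitimate, but the rank-one pieces it generates carry second slots of the form $(S_\alpha^*)^{kn}P_\alpha(\overline\psi\cdot W_k^*\beta)$ rather than $S_\alpha^{km}B^*\widetilde{k}_{0,j}^\beta$, so arriving at the \emph{specific} displayed condition in the theorem would require further rewriting. Also, your remark that the first term needs to be converted ``via an $(I-S_\beta^m(S_\beta^*)^m)$-type correction'' is misplaced: the displayed formula just before the theorem already gives $A-S_\gamma A(S_\beta^*)^m$ in exactly the form you need, with no adjustment. The $(I-(S_\beta^*)^mS_\beta^m)$ insertion belongs in the treatment of the \emph{second} factor, and that is the paper's organizing move.
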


\begin{proof}
	Let $A=U_{\overline{\varphi}_-+\varphi_+}^{\beta,\gamma}\in\mathcal{S}_m(\beta,\gamma)$ with $\varphi_-\in K_\beta$, $\varphi_+\in zK_{W^*_m\gamma}$  and $B=U_{\overline{\psi}_-+\psi_+}^{\alpha,\beta}\in\mathcal{S}_k(\alpha,\beta)$ with $\psi_-\in K_\alpha$, $\psi_+\in zK_{W^*_k\beta}$. As mentioned above, we then have
	\begin{equation}
	\label{A}
	A-S_\gamma A(S^*_{\beta})^m
	=k_0^\gamma\otimes \varphi_-+\sum_{j=0}^{m-1} \tfrac{1}{j!}(S_\gamma P_\gamma W_m(\bar{z}^{m-j}\varphi_+))\otimes k_{0,j}^\beta
	\end{equation}
	and
	\begin{equation}
	\label{B}
	B-S_\beta B(S^*_{\alpha})^k
	=k_0^\beta\otimes \psi_-+\sum_{l=0}^{k-1} \tfrac{1}{l!}(S_\beta P_\beta W_k(\bar{z}^{k-l}\psi_+))\otimes k_{0,l}^\alpha.
	\end{equation}
	By \eqref{B}, for any nonnegative integer $n$ we have
	\begin{align*}
	S_\beta^{n} B(S^*_\alpha)^{kn}-&S_\beta^{n+1}B(S^*_\alpha)^{(n+1)k}\\&=(S_\beta^{n}k_0^\beta)\otimes(S_\alpha^{kn}\psi_-)+\sum_{l=0}^{k-1} \frac{1}{l!}S_\beta^{n+1}P_\beta W_k(\overline{z}^{k-l}\psi_+)\otimes S_\alpha^{kn}k_{0,l}^\alpha.
	\end{align*}
	Adding the above for $n=0,1,2,\ldots,m-1$, we obtain
	\begin{align}\label{C}
	B-S_\beta^{m} B(S^*_\alpha)^{km}&
	=\sum_{n=0}^{m-1}(S_\beta^{n}k_0^\beta)\otimes(S_\alpha^{kn}\psi_-)
	+\sum_{n=0}^{m-1}\sum_{l=0}^{k-1} \frac{1}{l!}S_\beta^{n+1} P_\beta W_k(\overline{z}^{k-l}\psi_+)\otimes S_\alpha^{kn}k_{0,l}^\alpha.
	\end{align}
	
Now the product $AB$ belongs to $\mathcal{S}_{km}(\alpha,\gamma)$ if and only if there exist $\Psi\in K_\alpha$ and $\Phi_j\in K_\gamma$, $j=0,1,\ldots,km-1$, such that
\begin{equation}
\label{eq_aux_thm_prod_UU_1}
  AB-S_\gamma AB(S^*_\alpha)^{km}=k_0^\gamma\otimes\Psi+\sum_{j=0}^{km-1} \Phi_j\otimes k_{0,j}^\alpha.
\end{equation}
By Lemma \ref{lem_sar_k}, we have
\begin{displaymath}
\begin{split}
S_\gamma AB(S^*_\alpha)^{km}&=S_\gamma A(S^*_\beta)^{m}S_\beta^{m}B(S^*_\alpha)^{km}+S_\gamma A(I_{K_\beta}-(S^*_\beta)^{m}S_\beta^{m})B(S^*_\alpha)^{km}\\
&=S_\gamma A(S^*_\beta)^{m}S_\beta^{m}B(S^*_\alpha)^{km}+S_\gamma A\left(\sum_{j=0}^{m-1}\left(\tfrac{1}{j!}\right)^2\!\!\cdot( \widetilde{k}_{0,j}^\beta\otimes \widetilde{k}_{0,j}^\beta)\right)B(S^*_\alpha)^{km}\\
&=S_\gamma A(S^*_\beta)^{m}S_\beta^{m}B(S^*_\alpha)^{km}+\sum_{j=0}^{m-1}\left(\tfrac{1}{j!}\right)^2\!\!\cdot( S_\gamma A\widetilde{k}_{0,j}^\beta)\otimes (S_\alpha^{km}B^*\widetilde{k}_{0,j}^\beta).\\
\end{split}
\end{displaymath}
Using \eqref{A} and \eqref{C} we obtain
\begin{align*}
S_\gamma A(S^*_\beta)^{m}S_\beta^{m}B&(S^*_\alpha)^{km}\\
=AB&-\sum_{n=0}^{m-1}(AS_\beta^{n}k_0^\beta)\otimes(S_\alpha^{kn}\psi_-)
-\sum_{n=0}^{m-1}\sum_{l=0}^{k-1} \frac{1}{l!}AS_\beta^{n+1} P_\beta W_k(\overline{z}^{k-l}\psi_+)\otimes S_\alpha^{kn}k_{0,l}^\alpha\\
&-k_0^\gamma\otimes B^*\varphi_--\sum_{j=0}^{m-1} \tfrac{1}{j!}(S_\gamma P_\gamma W_m(\bar{z}^{m-j}\varphi_+))\otimes B^*k_{0,j}^\beta\\
&+\sum_{n=0}^{m-1}\langle S_\beta^{n}k_0^\beta,\varphi_-\rangle k_0^\gamma\otimes(S_\alpha^{kn}\psi_-)\\
&+\sum_{n=0}^{m-1}\sum_{l=0}^{k-1} \frac{1}{l!}\langle S_\beta^{n+1} P_\beta W_k(\overline{z}^{k-l}\psi_+),\varphi_-\rangle k_0^\gamma\otimes S_\alpha^{kn}k_{0,l}^\alpha\\
&+\sum_{j=0}^{m-1} \sum_{n=0}^{m-1}  \tfrac{\langle S_\beta^{n}k_0^\beta,k_{0,j}^\beta\rangle}{j!}(S_\gamma P_\gamma W_m(\bar{z}^{m-j}\varphi_+))\otimes(S_\alpha^{kn}\psi_-)\\
&+
\sum_{n=0}^{m-1}\sum_{l=0}^{k-1}\sum_{j=0}^{m-1} \tfrac{\langle S_\beta^{n+1} P_\beta W_k(\overline{z}^{k-l}\psi_+),k_{0,j}^\beta\rangle}{j! l!}(S_\gamma P_\gamma W_m(\bar{z}^{m-j}\varphi_+))\otimes S_\alpha^{kn}k_{0,l}^\alpha.
\end{align*}
Note that $$S_\alpha^{kn}k_{0,l}^\alpha=\tfrac{l!}{(kn+l)!}k_{0,kn+l}^\alpha.$$
It follows that
\begin{align*}
AB-\ S_\gamma &AB(S^*_\alpha)^{km}\\
=k_0^\gamma\otimes&\left(B^*\psi_--\sum_{n=0}^{m-1}\overline{\langle S_\beta^{n}k_0^\beta,\varphi_-\rangle} S_\alpha^{kn}\psi_--\sum_{n=0}^{m-1}\sum_{l=0}^{k-1} \frac{1}{l!}\overline{\langle S_\beta^{n+1} P_\beta W_k(\overline{z}^{k-l}\psi_+),\varphi_-\rangle} S_\alpha^{kn}k_{0,l}^\alpha\right)\\
+\sum_{n=0}^{m-1}&\sum_{l=0}^{k-1}\left(\tfrac{1}{(kn+l)!}AS_\beta^{n+1} P_\beta W_k(\overline{z}^{k-l}\psi_+)  \right.\\
&\left.-\sum_{j=0}^{m-1} \tfrac{\langle S_\beta^{n+1} P_\beta W_k(\overline{z}^{k-l}\psi_+),k_{0,j}^\beta\rangle}{j!(kn+l)!}S_\gamma P_\gamma W_m(\bar{z}^{m-j}\varphi_+)\right)\otimes k_{0,kn+l}^\alpha\\
+\sum_{n=0}^{m-1}&(AS_\beta^{n}k_0^\beta)\otimes(S_\alpha^{kn}\psi_-)+\sum_{j=0}^{m-1} \tfrac{1}{j!}(S_\gamma P_\gamma W_m(\bar{z}^{m-j}\varphi_+))\otimes B^*k_{0,j}^\beta\\
-\sum_{j=0}^{m-1}& \sum_{n=0}^{m-1}  \tfrac{\langle S_\beta^{n}k_0^\beta,k_{0,j}^\beta\rangle}{j!}(S_\gamma P_\gamma W_m(\bar{z}^{m-j}\varphi_+))\otimes(S_\alpha^{kn}\psi_-)-\sum_{j=0}^{m-1}\left(\tfrac{1}{j!}\right)^2\!\!\cdot( S_\gamma A\widetilde{k}_{0,j}^\beta)\otimes (S_\alpha^{km}B^*\widetilde{k}_{0,j}^\beta)
\end{align*}
and so $AB$ satisfies \eqref{eq_aux_thm_prod_UU_1} if and only if there exist functions
$\upsilon\in K_\alpha$ and $\tau_p\in K_\gamma$, $p=0,1,\ldots,mk-1$, such that
\begin{align*}
 \sum_{n=0}^{m-1}\tfrac1{n!}(Ak_{0,n}^\beta)\otimes(S_\alpha^{kn}\psi_-)&-\sum_{j=0}^{m-1}\left(\tfrac{1}{j!}\right)^2\!\!\cdot( S_\gamma A\widetilde{k}_{0,j}^\beta)\otimes (S_\alpha^{km}B^*\widetilde{k}_{0,j}^\beta)\\&+\sum_{j=0}^{m-1} \tfrac{1}{j!}(S_\gamma P_\gamma W_m(\bar{z}^{m-j}\varphi_+))\otimes \left(B^*k_{0,j}^\beta-\sum_{n=0}^{m-1}  {\langle S_\beta^{n}k_0^\beta,k_{0,j}^\beta\rangle}S_\alpha^{kn}\psi_-\right)\\
 \\
 = k_0^\gamma\otimes \upsilon&+\sum_{p=0}^{km-1} \tau_p\otimes k_{0,p}^\alpha.
 \end{align*}

Formula for the symbol of $AB$ follows from \eqref{ox}.
\end{proof}


\begin{corollary}
	Let $\alpha$, $\beta$ and $\gamma$ be nonconstant inner functions and let $k$ be a fixed positive integer such that $k\leq\dim K_\alpha$.
	Assume that $A=A_{\overline{\varphi}_-+\varphi_+}^{\beta,\gamma}\in\mathcal{T}(\beta,\gamma)$ for some $\varphi_-\in K_\beta$, $\varphi_+\in zK_{\gamma}$  and $B=U_{\overline{\psi}_-+\psi_+}^{\alpha,\beta}\in\mathcal{S}_k(\alpha,\beta)$ for some $\psi_-\in K_\alpha$, $\psi_+\in zK_{W^*_k\beta}$. Then $AB\in\mathcal{S}_{k}(\alpha,\gamma)$
	if and only if there exist
	$\tau_p\in K_\gamma$, $p=0,1,\ldots,k-1$, and $\upsilon\in K_\alpha$ such that
	
	\begin{align*}
		Ak_{0}^\beta\otimes \psi_-- S_\gamma A\widetilde{k}_{0}^\beta\otimes S_\alpha^{k}B^*\widetilde{k}_{0}^\beta+(S_\gamma P_\gamma (\bar{z}\varphi_+))&\otimes (B^*k_{0}^\beta-{\langle k_0^\beta,k_{0}^\beta\rangle}\psi_-)	\\
			= k_0^\gamma&\otimes \upsilon+\sum_{p=0}^{k-1} \tau_p\otimes k_{0,p}^\alpha.
	\end{align*}
	In that  case $AB=U_{\xi}^{\alpha,\gamma}$ with
	\begin{align*}
		\xi=\overline{\upsilon}&+\overline{B^*\psi_-}-{\langle  k_0^\beta,\varphi_-\rangle} \overline{\psi_-}-\sum_{l=0}^{k-1} \frac{1}{l!}{\langle S_\beta P_\beta W_k(\overline{z}^{k-l}\psi_+),\varphi_-\rangle} \overline{k_{0,l}^\alpha}\\
		&+\sum_{p=0}^{k-1} (W_{k}^*\tau_{p})p!\overline{z}^p+\sum_{p=0}^{k-1}(W_{k}^*AS_\beta P_\beta W_k(\overline{z}^{k}\psi_+))\overline{z}^p\\&-\sum_{p=0}^{k-1}\langle  P_\beta W_k(\overline{z}^{k-p}\psi_+),k_{0}^\beta\rangle S_\gamma P_\gamma (\bar{z}\varphi_+)\overline{z}^p.
	\end{align*}
\end{corollary}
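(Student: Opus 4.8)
The plan is to derive this corollary directly as the special case $m=1$ of Theorem~\ref{thm_prod_L2-symb_Sk_UU}. The key point is that $W_1=I_{L^2}$, whence $W_1^*=I_{L^2}$, $W_1^*\gamma=\gamma$, $\mathcal{S}_1(\beta,\gamma)=\mathcal{T}(\beta,\gamma)$, and $U_\varphi^{\beta,\gamma}=A_\varphi^{\beta,\gamma}$ for every $\varphi\in L^2$. Consequently the hypotheses of the corollary on $A$ — that $A=A_{\overline{\varphi}_-+\varphi_+}^{\beta,\gamma}\in\mathcal{T}(\beta,\gamma)$ with $\varphi_-\in K_\beta$ and $\varphi_+\in zK_\gamma$ — are exactly the hypotheses of Theorem~\ref{thm_prod_L2-symb_Sk_UU} on $A$ when $m=1$ (since $zK_\gamma=zK_{W_1^*\gamma}$), the hypotheses on $B$ are identical, and the dimension conditions $m=1\le\dim K_\beta$ (automatic, $\beta$ being nonconstant) and $km=k\le\dim K_\alpha$ hold. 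So Theorem~\ref{thm_prod_L2-symb_Sk_UU} applies with $m=1$, yielding that $AB\in\mathcal{S}_{km}(\alpha,\gamma)=\mathcal{S}_k(\alpha,\gamma)$ exactly when its criterion holds, together with the corresponding symbol $\xi$.

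It then remains only to specialise the criterion and the formula for $\xi$ to $m=1$ and check that they reduce to the expressions in the corollary. First, every sum $\sum_{n=0}^{m-1}$ and $\sum_{j=0}^{m-1}$ in those two formulas collapses to its single term at index $0$. In the surviving terms I would substitute $S_\beta^{0}=I_{K_\beta}$, $S_\alpha^{0}=I_{K_\alpha}$, $k_{0,0}^\beta=k_0^\beta$, $\widetilde{k}_{0,0}^\beta=\widetilde{k}_0^\beta$, $W_{km}^*=W_k^*$, $W_1(\overline{z}\varphi_+)=\overline{z}\varphi_+$, and simplify the indices using $k(n+1)-p=k-p$ and $S_\alpha^{kn}k_{0,l}^\alpha=k_{0,l}^\alpha$ at $n=0$. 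After these substitutions the criterion of Theorem~\ref{thm_prod_L2-symb_Sk_UU} becomes precisely the displayed criterion of the corollary, and the formula for $\xi$ becomes the one displayed there.

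Because this is a pure specialisation of an already established theorem, I do not expect any real obstacle; the only work is clerical — correctly identifying which multi-indexed summands of Theorem~\ref{thm_prod_L2-symb_Sk_UU} survive at $m=1$ and simplifying the resulting powers of $\overline{z}$ and the iterated compressions $S_\beta^{n+1}$ and $S_\alpha^{kn}$. An equivalent route would be to rerun the proof of Theorem~\ref{thm_prod_L2-symb_Sk_UU} with $m=1$ inserted from the start, in which case the two operator identities used there coalesce into the single relation $B-S_\beta B(S^*_\alpha)^k=k_0^\beta\otimes\psi_-+\sum_{l=0}^{k-1}\tfrac1{l!}(S_\beta P_\beta W_k(\overline{z}^{k-l}\psi_+))\otimes k_{0,l}^\alpha$; this reproduces the same computation.
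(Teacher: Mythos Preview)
Your proposal is correct and is exactly the paper's (implicit) proof: the corollary is stated without a separate argument because it is the case $m=1$ of Theorem~\ref{thm_prod_L2-symb_Sk_UU}, and your reduction via $W_1=I_{L^2}$, $\mathcal{S}_1(\beta,\gamma)=\mathcal{T}(\beta,\gamma)$, and collapsing the sums over $n$ and $j$ to their $0$-th term is precisely that specialization. The only caveat is clerical: carrying out the substitution $k(n+1)-p=k-p$ and $S_\beta^{n+1}=S_\beta$ at $n=0$ literally, you should double-check that the printed formula for $\xi$ in the corollary matches what you obtain term by term, as a couple of the exponents and shifts there appear to be minor misprints relative to the theorem.
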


%
%
%

\begin{corollary}
	Let $\alpha$, $\beta$ and $\gamma$ be nonconstant inner functions and let $m$ be a fixed positive integer such that $m\leq \dim K_\beta$.
	Assume that $A=U_{\overline{\varphi}_-+\varphi_+}^{\beta,\gamma}\in\mathcal{S}_m(\beta,\gamma)$ for some $\varphi_-\in K_\beta$, $\varphi_+\in zK_{W^*_m\gamma}$  and $B=A_{\overline{\psi}_-+\psi_+}^{\alpha,\beta}\in\mathcal{T}(\alpha,\beta)$ for some $\psi_-\in K_\alpha$, $\psi_+\in zK_{\beta}$. Then $AB\in\mathcal{S}_{m}(\alpha,\gamma)$
	if and only if there exist
	$\tau_p\in K_\gamma$, $p=0,1,\ldots,m-1$, and $\upsilon\in K_\alpha$ such that
	
	\begin{align*}
		\sum_{n=0}^{m-1}\tfrac1{n!}(Ak_{0,n}^\beta)\otimes(S_\alpha^{n}\psi_-)&-\sum_{j=0}^{m-1}\left(\tfrac{1}{j!}\right)^2\!\!\cdot( S_\gamma A\widetilde{k}_{0,j}^\beta)\otimes (S_\alpha^{m}B^*\widetilde{k}_{0,j}^\beta)\\&+\sum_{j=0}^{m-1} \tfrac{1}{j!}(S_\gamma P_\gamma W_m(\bar{z}^{m-j}\varphi_+))\otimes \left(B^*k_{0,j}^\beta-\sum_{n=0}^{m-1}  {\langle S_\beta^{n}k_0^\beta,k_{0,j}^\beta\rangle}S_\alpha^{n}\psi_-\right)
		\\
		= k_0^\gamma\otimes \upsilon&+\sum_{p=0}^{m-1} \tau_p\otimes k_{0,p}^\alpha.
	\end{align*}
	In that case $AB=U_{\xi}^{\alpha,\gamma}$ with
	\begin{align*}
		\xi=\overline{\upsilon}&+\overline{B^*\psi_-}-\sum_{n=0}^{m-1}{\langle S_\beta^{n}k_0^\beta,\varphi_-\rangle} \overline{S_\alpha^{n}\psi_-}-\sum_{n=0}^{m-1} {\langle S_\beta^{n+1} P_\beta W_k(\overline{z}\psi_+),\varphi_-\rangle} S_\alpha^{n}\overline{k_{0}^\alpha}\\
		&+\sum_{p=0}^{m-1} (W_{m}^*\tau_{p})p!\overline{z}^p+\sum_{p=0}^{m-1}\sum_{n=0}^{m-1}(W_{m}^*AS_\beta^{n+1} P_\beta (\overline{z}^{n+1-p}\psi_+))\overline{z}^p\\&-\sum_{p=0}^{m-1}\sum_{n=0}^{m-1}\left(\sum_{j=0}^{m-1} \tfrac{1}{j!}\langle S_\beta^{n+1} P_\beta (\overline{z}^{n+1-p}\psi_+),k_{0,j}^\beta\rangle S_\gamma P_\gamma W_m(\bar{z}^{m-j}\varphi_+)\right)\overline{z}^p.
	\end{align*}
\end{corollary}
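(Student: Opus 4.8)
The plan is to obtain this corollary as the special case $k=1$ of Theorem~\ref{thm_prod_L2-symb_Sk_UU}. Since $\mathcal{T}(\alpha,\beta)=\mathcal{S}_1(\alpha,\beta)$, the operator $B=A_{\overline{\psi}_-+\psi_+}^{\alpha,\beta}$ is precisely an element of $\mathcal{S}_1(\alpha,\beta)$. For $k=1$ the operators $W_k=W_1$ and $W_k^*=W_1^*$ both equal the identity on $L^2$, so $W_1^*\beta=\beta$, $K_{W_1^*\beta}=K_\beta$, and the hypothesis $\psi_+\in zK_{W_k^*\beta}$ of Theorem~\ref{thm_prod_L2-symb_Sk_UU} becomes $\psi_+\in zK_\beta$, exactly what is assumed here. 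Moreover, since $\alpha$ is nonconstant we have $\dim K_\alpha\geq 1=k$, so the auxiliary restriction $k\leq\dim K_\alpha$ of that theorem is automatic, while the remaining hypothesis $m\leq\dim K_\beta$ carries over unchanged.

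Next I would substitute $k=1$ throughout the statement of Theorem~\ref{thm_prod_L2-symb_Sk_UU}. Then $km=m$, $W_{km}^*=W_m^*$, $S_\alpha^{kn}=S_\alpha^n$ and $S_\alpha^{km}=S_\alpha^m$; the summation index $l\in\{0,\dots,k-1\}$ collapses to $l=0$, with $k_{0,0}^\alpha=k_0^\alpha$ and $P_\beta W_1(\overline{z}^{k-l}\psi_+)=P_\beta(\overline{z}\psi_+)$, while $P_\beta W_1(\overline{z}^{k(n+1)-p}\psi_+)=P_\beta(\overline{z}^{n+1-p}\psi_+)$. Feeding these identifications into the characterization of the membership $AB\in\mathcal{S}_{km}(\alpha,\gamma)$ and into the formula for $\xi$ supplied by Theorem~\ref{thm_prod_L2-symb_Sk_UU}, the double sums $\sum_n\sum_l$ become single sums over $n$, and one recovers verbatim the condition and the expression for $\xi=\overline{\upsilon}+\overline{B^*\psi_-}-\dots$ stated in the corollary. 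Alternatively, one could repeat the argument of Theorem~\ref{thm_prod_L2-symb_Sk_UU} from scratch, starting from the fact that for a truncated Toeplitz operator $B$ one has $B-S_\beta B S_\alpha^*=k_0^\beta\otimes\psi_-+(S_\beta P_\beta(\overline{z}\psi_+))\otimes k_0^\alpha$; but invoking the theorem is shorter.

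The whole argument is a routine specialization, so there is no genuine difficulty; the only point requiring a little care is the bookkeeping of the summation ranges, together with the observation that although every occurrence of $W_k$ and $W_k^*$ attached to $B$ disappears when $k=1$, the factors $W_m$ and $W_m^*$ coming from $A\in\mathcal{S}_m(\beta,\gamma)$ survive, so that the slant nature of the product $AB$ is inherited entirely from the first factor $A$.
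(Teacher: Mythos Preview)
Your approach is exactly right and matches the paper's intent: the corollary is stated immediately after Theorem~\ref{thm_prod_L2-symb_Sk_UU} without a separate proof, so it is meant to be read as the specialization $k=1$ of that theorem, using $\mathcal{T}(\alpha,\beta)=\mathcal{S}_1(\alpha,\beta)$ and $W_1=\mathrm{id}$. Your bookkeeping of the collapsed index $l=0$, of $W_{km}^*=W_m^*$, $S_\alpha^{kn}=S_\alpha^n$, and of the automatic condition $1\le\dim K_\alpha$ is correct, and nothing further is needed.
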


%
%
%

\begin{bibdiv}
\begin{biblist}

      \bib{AB1}{article}{
   author={Arora, S. C.},
   author={Batra, Ruchika},
   title={On generalized slant Toeplitz operators},
   journal={Indian J. Math.},
   volume={45},
   date={2003},
   number={2},
   pages={121--134},
   issn={0019-5324},
   review={\MR{2035900}},
}

\bib{AB}{article}{
   author={Arora, S. C.},
   author={Batra, Ruchika},
   title={Generalized slant Toeplitz operators on $H^2$},
   journal={Math. Nachr.},
   volume={278},
   date={2005},
   number={4},
   pages={347--355},
   issn={0025-584X},
   review={\MR{2121563}},
   doi={10.1002/mana.200310244},
}

\bib{berc}{book}{
	author={Bercovici, Hari},
	title={Operator theory and arithmetic in $H^\infty$},
	series={Mathematical Surveys and Monographs},
	volume={26},
	publisher={American Mathematical Society, Providence, RI},
	date={1988},
	pages={xii+275},
	isbn={0-8218-1528-8},
	review={\MR{954383}},
	doi={10.1090/surv/026},
}

\bib{BS}{book}{
   author={B\"{o}ttcher, Albrecht},
   author={Silbermann, Bernd},
   title={Analysis of Toeplitz operators},
   series={Springer Monographs in Mathematics},
   edition={2},
   note={Prepared jointly with Alexei Karlovich},
   publisher={Springer-Verlag, Berlin},
   date={2006},
   pages={xiv+665},
   isbn={978-3-540-32434-8},
   isbn={3-540-32434-8},
   review={\MR{2223704}},
}

\bib{ptak}{article}{
   author={C\^{a}mara, Crisina},
   author={Jurasik, Joanna},
   author={Kli\'{s}-Garlicka, Kamila},
   author={Ptak, Marek},
   title={Characterizations of asymmetric truncated Toeplitz operators},
   journal={Banach J. Math. Anal.},
   volume={11},
   date={2017},
   number={4},
   pages={899--922},
   review={\MR{3708535}},
   doi={10.1215/17358787-2017-0029},
}

\bib{intert}{article}{
	author={C\^{a}mara, M. Cristina},
	author={Kli\'{s}-Garlicka, Kamila},
	author={\L anucha, Bartosz},
	author={Ptak, Marek},
	title={Intertwining property for compressions of multiplication
		operators},
	journal={Results Math.},
	volume={77},
	date={2022},
	number={4},
	pages={Paper No. 140, 20},
	issn={1422-6383},
	review={\MR{4423597}},
	doi={10.1007/s00025-022-01673-w},
}

\bib{ptak2}{article}{
  author={C\^{a}mara, M. Cristina},
author={Kli\'{s}-Garlicka, Kamila},
author={Ptak, Marek},
title={Asymmetric truncated Toeplitz operators and conjugations},
journal={Filomat},
volume={33},
date={2019},
number={12},
pages={3697--3710},
issn={0354-5180},
review={\MR{4040358}},
}

\bib{part}{article}{
   author={C\^{a}mara, M. Cristina},
   author={Partington, Jonathan R.},
   title={Asymmetric truncated Toeplitz operators and Toeplitz operators
   with matrix symbol},
   journal={J. Operator Theory},
   volume={77},
   date={2017},
   number={2},
   pages={455--479},
   issn={0379-4024},
   review={\MR{3634513}},
   doi={10.7900/jot.2016apr27.2108},
}

\bib{part2}{article}{
   author={C\^{a}mara, M. Cristina},
   author={Partington, Jonathan R.},
   title={Spectral properties of truncated Toeplitz operators by equivalence
   after extension},
   journal={J. Math. Anal. Appl.},
   volume={433},
   date={2016},
   number={2},
   pages={762--784},
   issn={0022-247X},
   review={\MR{3398735}},
   doi={10.1016/j.jmaa.2015.08.019},
}

\bib{CT}{article}{
   author={Chalendar, Isabelle},
   author={Timotin, Dan},
   title={Commutation relations for truncated Toeplitz operators},
   journal={Oper. Matrices},
   volume={8},
   date={2014},
   number={3},
   pages={877--888},
   issn={1846-3886},
   review={\MR{3257897}},
   doi={10.7153/oam-08-49},
}

\bib{CFT}{article}{
   author={Chalendar, Isabelle},
   author={Fricain, Emmanuel},
   author={Timotin, Dan},
   title={A survey of some recent results on truncated Toeplitz operators},
   conference={
      title={Recent progress on operator theory and approximation in spaces
      of analytic functions},
   },
   book={
      series={Contemp. Math.},
      volume={679},
      publisher={Amer. Math. Soc., Providence, RI},
   },
   date={2016},
   pages={59--77},
   review={\MR{3589670}},
}

\bib{GDS1}{article}{
	author={Datt, Gopal},
	author={Pandey, Shesh Kumar},
	title={Slant Toeplitz operators on the Lebesgue space of $n$-dimensional
		torus},
	journal={Hokkaido Math. J.},
	volume={49},
	date={2020},
	number={3},
	pages={363--389},
	issn={0385-4035},
	review={\MR{4187113}},
	doi={10.14492/hokmj/1607936533},
}

\bib{GDS2}{article}{
	author={Datt, Gopal},
	author={Pandey, Shesh Kumar},
	title={Compression of slant Toeplitz operators on the Hardy space of
		$n$-dimensional torus},
	journal={Czechoslovak Math. J.},
	volume={70(145)},
	date={2020},
	number={4},
	pages={997--1018},
	issn={0011-4642},
	review={\MR{4181792}},
	doi={10.21136/CMJ.2020.0088-19},
}

\bib{duren}{book}{
   author={Duren, Peter L.},
   title={Theory of $H\sp{p}$ spaces},
   series={Pure and Applied Mathematics, Vol. 38},
   publisher={Academic Press, New York-London},
   date={1970},
   pages={xii+258},
   review={\MR{0268655}},
}

\bib{fm}{book}{
   author={Fricain, Emmanuel},
   author={Mashreghi, Javad},
   title={The theory of $\mathcal H$($b$) spaces. Vol. 1},
   series={New Mathematical Monographs},
   volume={20},
   publisher={Cambridge University Press, Cambridge},
   date={2016},
   pages={xix+681},
   isbn={978-1-107-02777-0},
   review={\MR{3497010}},
   doi={10.1017/CBO9781139226752},
}

\bib{bros}{book}{
   author={Garcia, Stephan Ramon},
   author={Mashreghi, Javad},
   author={Ross, William T.},
   title={Introduction to model spaces and their operators},
   series={Cambridge Studies in Advanced Mathematics},
   volume={148},
   publisher={Cambridge University Press, Cambridge},
   date={2016},
   pages={xv+322},
   isbn={978-1-107-10874-5},
   review={\MR{3526203}},
   doi={10.1017/CBO9781316258231},
}

\bib{gp}{article}{
   author={Garcia, Stephan Ramon},
   author={Putinar, Mihai},
   title={Complex symmetric operators and applications},
   journal={Trans. Amer. Math. Soc.},
   volume={358},
   date={2006},
   number={3},
   pages={1285--1315},
   issn={0002-9947},
   review={\MR{2187654}},
   doi={10.1090/S0002-9947-05-03742-6},
}

\bib{gar3}{article}{
   author={Garcia, Stephan Ramon},
   author={Ross, William T.},
   title={Recent progress on truncated Toeplitz operators},
   conference={
      title={Blaschke products and their applications},
   },
   book={
      series={Fields Inst. Commun.},
      volume={65},
      publisher={Springer, New York},
   },
   date={2013},
   pages={275--319},
   review={\MR{3052299}},
   doi={10.1007/978-1-4614-5341-3\_15},
}

\bib{GMW}{article}{
   author={Goodman, T. N. T.},
   author={Micchelli, Charles A.},
   author={Ward, J. D.},
   title={Spectral radius formulas for subdivision operators},
   conference={
      title={Recent advances in wavelet analysis},
   },
   book={
      series={Wavelet Anal. Appl.},
      volume={3},
      publisher={Academic Press, Boston, MA},
   },
   date={1994},
   pages={335--360},
   review={\MR{1244611}},
}

\bib{BM}{article}{
   author={Gu, Caixing},
   author={\L anucha, Bartosz},
   author={Michalska, Ma\l gorzata},
   title={Characterizations of asymmetric truncated Toeplitz and Hankel
   operators},
   journal={Complex Anal. Oper. Theory},
   volume={13},
   date={2019},
   number={3},
   pages={673--684},
   issn={1661-8254},
   review={\MR{3940385}},
   doi={10.1007/s11785-018-0783-8},
}

\bib{Ho}{article}{
   author={Ho, Mark C.},
   title={Properties of slant Toeplitz operators},
   journal={Indiana Univ. Math. J.},
   volume={45},
   date={1996},
   number={3},
   pages={843--862},
   issn={0022-2518},
   review={\MR{1422109}},
   doi={10.1512/iumj.1996.45.1973},
}

\bib{Ho1}{article}{
   author={Ho, Mark C.},
   title={Spectra of slant Toeplitz operators with continuous symbols},
   journal={Michigan Math. J.},
   volume={44},
   date={1997},
   number={1},
   pages={157--166},
   issn={0026-2285},
   review={\MR{1439675}},
   doi={10.1307/mmj/1029005627},
}

\bib{Ho2}{article}{
   author={Ho, Mark C.},
   title={Adjoints of slant Toeplitz operators},
   journal={Integral Equations Operator Theory},
   volume={29},
   date={1997},
   number={3},
   pages={301--312},
   issn={0378-620X},
   review={\MR{1477322}},
   doi={10.1007/BF01320703},
}

\bib{Ho3}{article}{
   author={Ho, Mark C.},
   title={Adjoints of slant Toeplitz operators. II},
   journal={Integral Equations Operator Theory},
   volume={41},
   date={2001},
   number={2},
   pages={179--188},
   issn={0378-620X},
   review={\MR{1847171}},
   doi={10.1007/BF01295304},
}

\bib{blicharz1}{article}{
   author={Jurasik, Joanna},
   author={\L anucha, Bartosz},
   title={Asymmetric truncated Toeplitz operators equal to the zero
   operator},
   journal={Ann. Univ. Mariae Curie-Sk\l odowska Sect. A},
   volume={70},
   date={2016},
   number={2},
   pages={51--62},
   issn={0365-1029},
   review={\MR{3592981}},
}

\bib{blicharz2}{article}{
   author={Jurasik, Joanna},
   author={\L anucha, Bartosz},
   title={Asymmetric truncated Toeplitz operators on finite-dimensional
   spaces},
   journal={Oper. Matrices},
   volume={11},
   date={2017},
   number={1},
   pages={245--262},
   issn={1846-3886},
   review={\MR{3602642}},
   doi={10.7153/oam-11-17},
}

\bib{LL}{article}{
   author={Liu, Chaomei},
   author={Lu, Yufeng},
   title={Product and commutativity of $k$th-order slant Toeplitz operators},
   journal={Abstr. Appl. Anal.},
   date={2013},
   pages={Art. ID 473916, 11},
   issn={1085-3375},
   review={\MR{3045038}},
}

\bib{LL2}{article}{
   author={Lu, Yufeng},
   author={Liu, Chaomei},
   author={Yang, Jun},
   title={Commutativity of $k$th-order slant Toeplitz operators},
   journal={Math. Nachr.},
   volume={283},
   date={2010},
   number={9},
   pages={1304--1313},
   issn={0025-584X},
   review={\MR{2731135}},
   doi={10.1002/mana.200710100},
}

    \bib{BL}{article}{
   author={\L anucha, Bartosz},
   title={On rank-one asymmetric truncated Toeplitz operators on
   finite-dimensional model spaces},
   journal={J. Math. Anal. Appl.},
   volume={454},
   date={2017},
   number={2},
   pages={961--980},
   issn={0022-247X},
   review={\MR{3658807}},
   doi={10.1016/j.jmaa.2017.05.033},
}

\bib{BL3}{article}{
   author={\L anucha, Bartosz},
   title={Asymmetric truncated Toeplitz operators of rank one},
   journal={Comput. Methods Funct. Theory},
   volume={18},
   date={2018},
   number={2},
   pages={259--267},
   issn={1617-9447},
   review={\MR{3806546}},
   doi={10.1007/s40315-017-0219-x},
}

%
\bib{BM2}{article}{
	author={\L anucha, Bartosz},
	author={Michalska, Ma\l gorzata},
	title={Compressions of $k$th-order slant Toeplitz operators to model
		spaces},
	journal={Lith. Math. J.},
	volume={62},
	date={2022},
	number={1},
	pages={69--87},
	issn={0363-1672},
	review={\MR{4383335}},
	doi={10.1007/s10986-021-09548-3},
}

\bib{ruben}{book}{
   author={Mart\'{\i}nez-Avenda\~{n}o, Rub\'{e}n A.},
   author={Rosenthal, Peter},
   title={An introduction to operators on the Hardy-Hilbert space},
   series={Graduate Texts in Mathematics},
   volume={237},
   publisher={Springer, New York},
   date={2007},
   pages={xii+220},
   isbn={978-0-387-35418-7},
   isbn={0-387-35418-2},
   review={\MR{2270722}},
}

\bib{NG}{book}{
	author={Sz.-Nagy, B\'{e}la},
	author={Foias, Ciprian},
	author={Bercovici, Hari},
	author={K\'{e}rchy, L\'{a}szl\'{o}},
	title={Harmonic analysis of operators on Hilbert space},
	series={Universitext},
	edition={2},
	edition={Revised and enlarged edition},
	publisher={Springer, New York},
	date={2010},
	pages={xiv+474},
	isbn={978-1-4419-6093-1},
	review={\MR{2760647}},
	doi={10.1007/978-1-4419-6094-8},
}

\bib{s}{article}{
   author={Sarason, Donald},
   title={Algebraic properties of truncated Toeplitz operators},
   journal={Oper. Matrices},
   volume={1},
   date={2007},
   number={4},
   pages={491--526},
   issn={1846-3886},
   review={\MR{2363975}},
   doi={10.7153/oam-01-29},
}

	\bib{s2}{article}{
	author={Sarason, Donald},
	title={Generalized interpolation in $H\sp{\infty }$},
	journal={Trans. Amer. Math. Soc.},
	volume={127},
	date={1967},
	pages={179--203},
	issn={0002-9947},
	review={\MR{208383}},
	doi={10.2307/1994641},
}

\bib{V}{article}{
   author={Villemoes, Lars F.},
   title={Wavelet analysis of refinement equations},
   journal={SIAM J. Math. Anal.},
   volume={25},
   date={1994},
   number={5},
   pages={1433--1460},
   issn={0036-1410},
   review={\MR{1289147}},
   doi={10.1137/S0036141092228179},
}

\bib{Y}{article}{
   author={Yagoub, Ameur},
   title={Products of asymmetric truncated Toeplitz operators},
   journal={Adv. Oper. Theory},
   volume={5},
   date={2020},
   number={1},
   pages={233--247},
   issn={},
   review={},
   doi={10.1007/s43036-019-00014-z},
}

\bib{ZA}{article}{
   author={Zegeye, Taddesse},
   author={Arora, S. C.},
   title={The compression of slant Toeplitz operator to $H^2(\partial D)$},
   journal={Indian J. Pure Appl. Math.},
   volume={32},
   date={2001},
   number={2},
   pages={221--226},
   issn={0019-5588},
   review={\MR{1820862}},
}
  \end{biblist}
  \end{bibdiv}

\end{document}